\newcommand{\ensemblenombre}[1]{\mathbb{#1}}
\newcommand{\Z}{\ensemblenombre{Z}}
\newcommand{\R}{\ensemblenombre{R}}
\newcommand{\C}{\mathcal{C}}
\newcommand{\A}{\mathcal{A}}
\newcommand{\DD}{\mathcal{D}}
\newcommand{\F}{{\mathcal{F}}}
\renewcommand{\P}{{\mathcal{P}}}
\renewcommand{\I}{{I}}
\newcommand{\FC}{{\mathcal{F}_\C}}
\newcommand{\1}{{\bf 1}}
\renewcommand{\L}{\Lambda}
\renewcommand{\D}{\Delta}
\renewcommand{\l}{\lambda}
\newcommand{\Rd}{\ensemblenombre{R}^d}
\newcommand{\g}{\gamma}
\newcommand{\G}{\Gamma}
\newcommand{\tL}{{\tau_\mathcal{L}}}
\newcommand{\z}{\zeta}
\begin{document}

\title*{Introduction to the theory of Gibbs point processes}
\author{DEREUDRE David}
\institute{DEREUDRE David, University Lille 1, \email{david.dereudre@univ-lille1.fr}}

%
%
\maketitle

\abstract*{rrrrrrrrrrrrrrrrrrrrEach chapter should be preceded by an abstract (10--15 lines long) that summarizes the content. The abstract will appear \textit{online} at \url{www.SpringerLink.com} and be available with unrestricted access. This allows unregistered users to read the abstract as a teaser for the complete chapter. As a general rule the abstracts will not appear in the printed version of your book unless it is the style of your particular book or that of the series to which your book belongs.
Please use the 'starred' version of the new Springer \texttt{abstract} command for typesetting the text of the online abstracts (cf. source file of this chapter template \texttt{abstract}) and include them with the source files of your manuscript. Use the plain \texttt{abstract} command if the abstract is also to appear in the printed version of the book.}

\abstract{
The Gibbs point processes (GPP) constitute  a large class of point processes with interaction between the points. The interaction can be attractive, repulsive, depending on geometrical features whereas the null interaction is associated  with the so-called Poisson point process. In a first part of this mini-course, we present several aspects of finite volume GPP defined on a bounded window in $\R^d$. In a second part, we introduce the more complicated formalism of infinite volume GPP defined on the full space $\R^d$. Existence, uniqueness and non-uniqueness  of GPP are non-trivial questions which we treat here  with completely self-contained proofs. The DLR equations, the GNZ equations and the variational principle are presented as well. Finally we investigate the estimation of parameters. The main standard estimators (MLE, MPLE, Takacs-Fiksel  and variational estimators) are presented and we prove their consistency. For sake of simplicity, during all the mini-course, we consider only the case of finite range interaction and  the setting of marked points is not presented. 
}

\section*{Introduction}

The spatial point processes are well studied objects in probability theory and statistics for modelling and analysing spatial data which appear in several disciplines as statistical mechanics, material science, astronomy, epidemiology, plant ecology, seismology, telecommunication, and others \cite{Baddeleyetal,DVJ}. There exist many models of such random points configurations in space and the most popular one is surely the Poisson point process. It corresponds to the natural way of producing independent locations of points in space without interaction. For dependent random structures, we can mention for instance the Cox processes, determinantal point processes, Gibbs point processes, etc. None of them is established as the most relevant model for applications. In fact the choice of the model depends on the nature of the dataset, the knowledge of (physical or biological) mechanisms producing the pattern, the aim of the study (theoretical, applied or numerical).

In this mini-course, we focus on Gibbs point processes (GPP) which constitute a large class of points processes, able to fit several kinds of patterns and which provide a clear interpretation of the interaction between the points, such as attraction or repulsion depending on their relative position. Note that this class is particularly large since several point processes can be represented as GPP (see \cite{GeorgiiYoo,Kozlov} for instance). The main disadvantage of GPP is the complexity of the model due to an intractable normalizing constant which appears in the local conditional densities. Therefore their analytical studies are in general based on implicit equilibrium equations which lead to complicated  and delicate analysis. Moreover, the theoretical results which are needed to investigate the Gibbs point process theory are scattered  across several publications or books. The aim of this mini-course is to provide a solid and self-contained theoretical basis for understanding deeply the Gibbs point process theory. The results are in general not exhaustive but the main ideas and tools are presented in accordance with modern and recent developments. The main strong restriction here involves the range of the interaction, which is assumed to be finite. The infinite range interaction requires the introduction of tempered configuration spaces and for sake of simplicity we decided to avoid this level of complexity. The mini-course is addressed for Master and Phd students and also for researchers who want to discover or investigate the domain. The manuscript is based on a mini-course given during the conference of GDR 3477 g\'eom\'etrie stochastique, at university of Nantes in April 2016.

In a first section, we introduce the finite volume GPP  on a bounded window $\L\subset \Rd$. They are simply defined as point processes in $\L$ whose the distributions are absolutely continuous with respect to the Poisson point process distribution. The unnormalized densities are of form $z^Ne^{-\beta H}$, where $z$ and $\beta$ are positive parameters (called respectively activity and inverse temperature), $N$ is the number of points and $H$ an energy function. Clearly, these distributions favour (or penalize) configurations with low (or high) energy $E$. This distortion strengthens as $\beta$ is large. The parameter $z$ allows to tune the mean number of points. This setting is relatively simple since all the objects are defined explicitly. However, the intractable normalization constant is ever a problem and most of quantities are not computable. Several standard notions (DLR and GNZ equations, Ruelle's estimates, etc.) are treated in this first section as a preparation for the more complicated setting of infinite volume GPP developed in the second section. Note that we do not present the setting of  marked Gibbs point processes in order to keep  the notations as simple as possible. However, all the results can be easily extended in this case. 

In a second section, we present the theory of infinite volume GPP in  $\Rd$. There are several motivations for studying such infinite volume regime. Firstly, the GPP are the standard models in statistical physics for modelling systems with a large number of interacting particles (around $10^{23}$ according to the Avogadro's number). Therefore, the case where the number of particles is infinite is an idealization of this setting and furnishes microscopic descriptions of  gas, liquid or solid. Macroscopic quantities like the density of particles, the pressure and the mean energy are consequently easily defined by mean values or laws of large numbers. Secondly, in the spatial statistic context, the asymptotic properties of estimators or tests are obtained when the observation window tends to the full space $\Rd$. This strategy requires the existence of infinite volume models. Finally, since the infinite volume GPP are stationary (shift invariant) in $\Rd$, several powerful tools, as the ergodic theorem or the central limit Theorem for mixing field, are available in this infinite volume regime.  

The infinite volume Gibbs measures are defined by a collection of implicit DLR equations (Dobrushin, Lanford and Ruelle). The existence, uniqueness and non-uniqueness are non trivial questions which we treat in depth with self-contained proofs in this second section. The phase transition between uniqueness and non uniqueness is one of the most difficult conjectures in statistical physics. This phenomenon is expected to occur for all standard interactions although it is proved rigorously only for few models. The area interaction is one of such models and the complete proof of its phase transition is given here. The GNZ equations, the variational principle are discussed as well. 

In the last section, we investigate the estimation of parameters which appear in the distribution of GPP. For sake of simplicity we deal only with the activity parameter $z$ and the inverse temperature $\beta$. We present several standard procedures (MLE, MPLE, Takacs-Fiksel procedure) and a new variational procedure. We show the consistency of estimators, which highlights that many theoretical results are possible in spite of lack of explicit computations. We will see that the GNZ equations play a crucial role in this task.  For sake of simplicity the asymptotic normality is not presented but some references are given. 

Let us finish this introduction by giving standard references. Historically, the GPP have been introduced for statistical mechanics considerations and an unovoidable reference is the book by Ruelle \cite{Ruellebook}. Important theoretical contributions are also developed in two Lecture Notes \cite{GeorgiiLN, Preston} by Georgii and Preston. For the relations between GPP and stochastic geometry, we can mention the book \cite{chiu2013} by Chiu et al.  and for spatial statistic and numerical considerations, the book by M{\o}ller and Waagepetersen \cite{MW} is the standard reference. Let us mention also the book \cite{VLbook} by van Lieshout on the applications of GPP.

\tableofcontents

\section{Finite volume Gibbs point processes}
\label{Chapter1}

In this first section we present the theory of Gibbs point process on a bounded set $\L\subset \Rd$. A Gibbs point process (GPP) is a point process with interactions between the points defined via an energy functional on the space of configurations. Roughly speaking, the GPP produces random configurations for which the configurations with low energy have more chance to appear than the configurations with high energy (see Definition \ref{DefinitionFiniteVolGPP}). In Section \ref{SectionPoisson} we recall succinctly some definitions of point process theory and we introduce the reference Poisson point process. The energy functions are discussed in Section \ref{SectionEnergy} and the definiton of finite volume GPP is given in Section \ref{SectionFiniteVolumGPP}. Some first properties are presented as well. The central DLR equations and GNZ equations are treated in Sections \ref{SectionDLRFiniteVolume} and \ref{SectionGNZFiniteVolume}. Finally we finish the first section by giving Ruelle estimates in the setting of superstable and lower regular energy functions.

\subsection{Poisson point process}\label{SectionPoisson}

In this first section, we describe briefly the setting of point process theory and we introduce the reference Poisson point process. We only give the main definitions and concepts and we suggest \cite{ DVJ,MKM} for a general presentation.

The space of configurations $\C$ is defined as the set of locally finite subsets in $\Rd$:

$$ \C=\{ \g \subset \Rd, \g_\L:=\g\cap \L \text{ is finite for any bounded set }\L\subset \Rd\}.$$  

Note that we consider only the simple point configurations, which means that the points do not overlap. We denote by $\C_f$ the space of finite configurations in $\C$ and by $\C_\L$ the space of finite configurations inside $\L\subset \R^d$.

The space $\C$ is equipped with the sigma-field $\FC$ generated by the counting functions $N_\L$ for all bounded measurable $\L\subset\Rd$, where $N_\L: \g \mapsto \#\g_\L$. A point process $\G$ is then simply a measurable function from any probability space $(\Omega,\F,P)$ to $(\C,\FC)$. As usual, the distribution (or the law) of a point process $\G$ is defined by the image of $P$ to $(\C,\FC)$ by the application $\G$. We say that $\G$ has finite intensity if, for any bounded set $\L$, the expectation $\mu(\L):=E(N_\L(\G))$ is finite. In this case, $\mu$ is a sigma-finite measure called intensity measure of $\G$. When $\mu=\z\lambda^d$, where $\l^d$ is the Lebesgue measure on $\R^d$ and $\z\ge 0$ a positive real, we simply say that $\G$ has finite intensity $\z$ .

The main class of point processes is the family of Poisson point processes, which furnish the natural way of producing independent points in space. Let $\mu$ be a sigma-finite measure in $\Rd$. A Poisson point process with intensity $\mu$  is a point process  $\G$ such that, for any bounded $\L$ in $\Rd$, these properties both occur

\begin{itemize}
\item The random variable $N_\L(\G)$ is distributed following a Poisson distribution with parameter $\mu(\L)$.
\item Given the event  $\{N_\L(\G)=n\}$, the $n$ points in $\G_\L$ are independent and distributed following the distribution $\mu_\L/\mu(\L)$.
\end{itemize}

The distribution of such a Poisson point process is denoted by  $\pi^\mu$. When the intensity is $\mu=\z\l^d$, we say that the Poisson point process is stationary (or homogeneous) with intensity $\z>0$, and denote its distribution $\pi^\z$. For any measurable set $\L\subset \Rd$, we denote by $\pi_\L^\z$ the distribution of a Poisson point process with intensity $\z\l^d_\L$ which is also the  distribution of a stationary Poisson point process with intensity $\z$  restricted to $\L$. For sake of brevity, $\pi$ and $\pi_\L$ denote the distribution of Poisson point processes with intensity $\z=1$.

\subsection{Energy functions}\label{SectionEnergy}

In this section, we present the energy functions with the standard assumptions which we assume in this mini-course. The choices of energy functions come from two main motivations. First, the GPP are natural models in statistical physics for modelling continuum interacting particles systems. In general, in this setting the energy function is a sum of the energy contribution of all pairs of points (see expression \eqref{energypairwise}). The GPP are also used in spatial statistics to fit as best as possible the real datasets. So, in a first step, the energy function is chosen by the user with respect to the characteristics of the dataset. Then the parameters are estimated in a second step.

\begin{definition}\label{Energyfuntion}
An energy function is a measurable function 
 $$ H: \C_f \mapsto \R\cup\{+\infty\}$$
 such that the following assumptions hold
 
 \begin{itemize}
 \item $H$ is {\bf non-degenerate}: 
  $$H(\emptyset)<+\infty.$$
 \item H is {\bf hereditary}: for any $\g\in\C_f$ and $x\in \g$ then 
 $$H(\g)<+\infty \Rightarrow  H(\g\backslash \{x\})<+\infty.$$
 \item $H$ is {\bf stable}: there exists a constant $A$ such that for any $\g\in\C_f$
  $$H(\g)\ge A N_{\R^d}(\g).$$
 \end{itemize}  

\end{definition}

The stability implies that the energy is superlinear. If the energy function $H$ is positive then the choice $A=0$ works but in the interesting cases, the constant $A$ is negative. The hereditary means that the set of allowed configurations (configurations with finite energy) is stable when points are removed. The non-degeneracy is very natural. Without this assumption, the energy would be equal to infinity everywhere (by hereditary). \\

{\it 1) Pairwise interaction.}  Let us start with the most popular energy function which is based on a function (called pair potential) 
$$ \varphi: \R^+ \to \R\cup \{+\infty\}.$$ 
The pairwise energy function is defined for any $\g\in\C_f$ by

\begin{equation}\label{energypairwise}
H(\g)=\sum_{\{x,y\}\subset \g} \varphi(|x-y|).
\end{equation}

Note that such an energy function is trivially  hereditary and non-degenerate. The stability is more delicate and we refer to general results in \cite{Ruellebook}. However if $\varphi$ is positive the result is obvious.

A standard example coming from statistical physics is the so-called Lennard-Jones pair potential where $\varphi(r)=ar^{-12}+b r^{-6}$ with $a>0$ and $b\in \R$. In the interesting case $b<0$, the pair potential $\varphi(r)$ is positive (repulsive) for small $r$ and negative (attractive) for large $r$. The stability is not obvious and is proved in Proposition 3.2.8 in \cite{Ruellebook}.

 The Strauss interaction corresponds to the pair potential  $\varphi(r)=\1_{[0,R]}(r)$ where $R>0$ is a support parameter. This interaction exhibits a constant repulsion between the particles at distance smaller than $R$. This simple model is very popular  in spatial statistics.

The multi-Strauss interaction corresponds to the pair potential
 
$$ \varphi(r)=\sum_{i=1}^k a_i \1_{]R_{i-1},R_i]},$$
where $(a_i)_{1\le i\le k}$ is a sequence of real numbers and $0=R_0<R_1<\ldots<R_k$ a sequence of increasing real numbers. Clearly, the pair potential exhibits a constant attraction or repulsion at different scales. The stability occurs provided that the parameter $a_1$ is large enough (see Section 3.2 in \cite{Ruellebook}). \\
 
{\it 2) Energy functions coming from geometrical objects.} Several energy functions are based on local geometrical characteristics. The main motivation is to provide random configurations such that special geometrical features appear with higher probability under the Gibbs processes than the original Poisson point process. In this paragraph we give examples related to the Delaunay-Voronoi diagram. Obviously other geometrical graph structures could be considered. 


 Let us recall that for any $x\in\g\in\C_f$ the Voronoi cell $C(x,\g)$ is defined by
$$   C(x,\g)=\Big\{w\in\R^d, \text{ such that } \forall y\in\g\; |x-w|\le |x-y|\Big\}.$$  
 
The Delaunay graph with vertices $\g$  is defined by considering the edges 
 $$D(\g)=\Big\{ \{x,y\}\subset \g \text{ such that }  C(x,\g)\cap C(y,\g)\neq \emptyset\Big\}.$$
See \cite{MollerLN} for a general presentation on the Delauany-Voronoi tessellations. 

A first geometric energy function can be defined by

\begin{equation}\label{energyvoronoi}
 H(\g)=\sum_{x\in\g} \1_{C(x,\g) \text{ is bounded }}\varphi(C(x,\g)),
 \end{equation}
where $\varphi$ is any function from the space of polytopes in $\Rd$ to $\R$. Examples of such functions $\varphi$ are the Area, the $(d-1)$-Hausdorff measure of the boundary, the number of faces, etc... Clearly these energy functions are non-degenerate and hereditary. The stability holds as soon as the function $\varphi$ is bounded from below.

Another kind of geometric energy function can be constructed via a pairwise interaction along the edges of the Delaunay graph. Let us consider a finite pair potential $ \varphi: \R^+ \mapsto \R$. Then the energy function is defined by

\begin{equation}\label{energyDelaunay}
H(\g)=\sum_{\{x,y\}\subset D(\g)} \varphi(|x-y|)
\end{equation}

which is again clearly non-degenerate and hereditary. The stability occurs in dimension $d=2$ thanks to Euler's formula. Indeed  the number of edges in the Delaunay graph is linear with respect to the number of vertices. Therefore the energy function is stable as soon as the pair potential $\varphi$ is bounded from below. In higher dimension $d>2$, the stability is more complicated and not really understood. Obviously, if $\varphi$ is positive, the stability occurs.

Let us give a last example of geometric energy function which is not based on the Delaunay-Voronoi diagram but on a germ-grain structure. For any radius $R>0$ we define the germ-grain structure of $\g\in\C$ by
$$L_R(\g)=\bigcup_{x\in\g} B(x,R),$$
where $B(x,R)$ is the closed ball centred at $x$ with radius $R$. Several interesting energy functions are built from this germ-grain structure. First the Widom-Rowlinson interaction is simply defined by

\begin{equation}\label{EnergyArea}
 H(\g)=\text{Area}(L_R(\g)),
 \end{equation}
where the "Area" is simply the Lebesgue measure $\lambda^d$. This model is very popular since it is one of a few models for which the phase transition result is proved (see Section \ref{SectionNonUniqueness}). This energy function is sometimes called Area-interaction \cite{BVL,Widom70}. If the Area functional is replaced by any linear combination of the Minkowski functionals we obtain the Quermass interaction \cite{david}.

Another example is the random cluster interaction defined by 
\begin{equation}\label{EnergyCRCM}
 H(\g)=\text{Ncc}(L_R(\g)),
 \end{equation}
where Ncc denotes the functional which counts the number of connected components. This energy function is introduced first in \cite{CCK} for its relations with the Widom-Rowlinson model. See also \cite{DH} for a general study in the infinite volume regime.

\subsection{Finite Volume GPP}\label{SectionFiniteVolumGPP}

Let $\L\subset\Rd$ such that $0<\l^d(\L)<+\infty$.  In this section we define the finite volume GPP on $\L$ and we give its first properties.

 \begin{definition}\label{DefinitionFiniteVolGPP}  The finite volume Gibbs measure on $\L$ with activity $z>0$, inverse temperature $\beta\ge 0$ and energy function $H$ is the distribution

\begin{equation}\label{DefGPP}
P_\L^{z,\beta}=\frac{1}{Z_\L^{z,\beta}} z^{N_\L} e^{-\beta H} \pi_\L, 
\end{equation}
where $Z_\L^{z,\beta}$, called partition function, is the normalization constant $\int z^{N_\L}e^{-\beta H} d\pi_\L$. A finite volume Gibbs point process (GPP) on $\L$ with activity $z>0$, inverse temperature $\beta\ge 0$ and energy function $H$  is a point process on $\L$ with distribution $P_\L^{z,\beta}$.
\end{definition}  

Note that $P_\L^{z,\beta}$ is well-defined since the partition function $Z_\L^{z,\beta}$ is positive and finite. Indeed, thanks to the non degeneracy of $H$ 
$$Z_\L^{z,\beta}\ge \pi_\L({\emptyset})e^{-\beta H(\{\emptyset\})}=e^{-\l^d(\L)}e^{-\beta H(\{\emptyset\})}>0$$

 and thanks to the stability of $H$

$$ Z_\L^{z,\beta} \le e^{-\l^d(\L)} \sum_{n=0}^{+\infty} \frac{(ze^{-\beta A}\l^d(\L))^n}{n!} = e^{\l^d(\L)(ze^{-\beta A}-1)}<+\infty. $$

In the case $\beta=0$, we recover that $P_\L^{z,\beta}$ is the Poisson point process $\pi_\L^z$. So the activity  parameter $z$ is the mean number of points per unit volume when the interaction is null. When the interaction is active ($\beta>0$), $P_\L^{z,\beta}$ favours the configurations with low energy and penalizes the configurations with high energy. This distortion strengthens as $\beta$ is large.
 
  There are many motivations for the exponential form of the density in \eqref{DefGPP}. Historically, it is due to the fact that the finite volume GPP solves the variational principle of statistical physics. Indeed, $P_\L^{z,\beta}$ is the unique probability measure which realizes the minimum of the free excess energy, equal to the mean energy plus the entropy. It expresses the common idea that the equilibrium states in statistical physics minimize the energy and maximize the "disorder". This result is presented in the following proposition. Recall first that the relative entropy of a probability measure $P$ on $\C_\L$ with respect to the Poisson point process $\pi_\L^\z$  is defined by 
\begin{equation}\label{entropyR}
\I(P|\pi_\L^\z)=\left\{\begin{array}{ll}
\int \log(f)dP  & \text{if } P\preccurlyeq \pi_\L^z \text{ with } f=\frac{dP}{d\pi_\L^\z}\\
+\infty & \text{ otherwise.}
\end{array}\right.
\end{equation}

\begin{proposition} [Variational Principle] \label{PropVP}
 Let $H$ be an energy function, $z>0$, $\beta\ge 0$. Then
  $$\{P_\L^{z,\beta}\}=\text{argmin}_{P\in\P_\L} \beta E_P(H)-\log(z)E_P(N_\L)+\I(P|\pi_\L),$$
where $\P_\L$ is the space of probability measures on $\C_\L$ with finite intensity  and $E_P(H)$ is the expectation of $H$ under $P$, which is always defined (maybe equal to infinity) since $H$ is stable.
\end{proposition}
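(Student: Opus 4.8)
The plan is to show that, up to an additive constant, the functional to be minimized is nothing but the relative entropy of $P$ with respect to $P_\L^{z,\beta}$ itself, and then to conclude by the non-negativity of the relative entropy together with its equality case. First I would dispose of the measures $P$ that are not absolutely continuous with respect to $\pi_\L$: for these $\I(P|\pi_\L)=+\infty$, while the stability bound $H\ge A N_\L$ on $\C_\L$ (recall that every point of a configuration in $\C_\L$ lies in $\L$, so $N_{\R^d}=N_\L$ there) guarantees $\beta E_P(H)-\log(z)E_P(N_\L)\ge(\beta A-\log z)E_P(N_\L)>-\infty$ because $P$ has finite intensity; hence the functional equals $+\infty$ and such a $P$ cannot be a minimizer. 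The same stability bound shows that $E_P(H)$ is always well defined in $(-\infty,+\infty]$, its negative part being dominated by $|A|N_\L$.

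Next, for $P\preccurlyeq\pi_\L$ with density $f=dP/d\pi_\L$, I would exploit the explicit density $f_{z,\beta}:=dP_\L^{z,\beta}/d\pi_\L=(Z_\L^{z,\beta})^{-1}z^{N_\L}e^{-\beta H}$. On the set $\{H<+\infty\}$, which carries all the mass of $P_\L^{z,\beta}$, one has $\log f_{z,\beta}=N_\L\log z-\beta H-\log Z_\L^{z,\beta}$. Writing $dP/dP_\L^{z,\beta}=f/f_{z,\beta}$ there, a direct computation yields the identity
\[
\I(P|P_\L^{z,\beta})=\I(P|\pi_\L)+\beta E_P(H)-\log(z)E_P(N_\L)+\log Z_\L^{z,\beta},
\]
equivalently $\beta E_P(H)-\log(z)E_P(N_\L)+\I(P|\pi_\L)=\I(P|P_\L^{z,\beta})-\log Z_\L^{z,\beta}$. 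Since $\log Z_\L^{z,\beta}$ does not depend on $P$, minimizing the functional amounts to minimizing $\I(P|P_\L^{z,\beta})$.

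The remaining step is the classical fact that $\I(P|P_\L^{z,\beta})\ge0$, with equality if and only if $P=P_\L^{z,\beta}$; this follows from Jensen's inequality applied to the strictly convex function $t\mapsto t\log t$, the strict convexity giving the equality case. Combining with the identity above gives $\beta E_P(H)-\log(z)E_P(N_\L)+\I(P|\pi_\L)\ge-\log Z_\L^{z,\beta}$ for every $P\in\P_\L$, with equality exactly when $P=P_\L^{z,\beta}$, which is the claim. I expect the main obstacle to be the careful bookkeeping around the set $\{H=+\infty\}$ and the possibly infinite values of the three terms: one must check that the algebra of entropies never produces an indeterminate $\infty-\infty$. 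Concretely, when $P$ charges $\{H=+\infty\}$ one has $E_P(H)=+\infty$ and $P\not\preccurlyeq P_\L^{z,\beta}$, so both sides of the identity are legitimately $+\infty$ and $P$ is again excluded as a minimizer; away from this case all quantities are controlled by the finite-intensity and stability assumptions, so the manipulation is justified.
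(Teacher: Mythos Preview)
Your proof is correct and follows essentially the same route as the paper: both reduce to Jensen's inequality for the logarithm. You package it as the identity $\beta E_P(H)-\log(z)E_P(N_\L)+\I(P|\pi_\L)=\I(P|P_\L^{z,\beta})-\log Z_\L^{z,\beta}$ and then invoke non-negativity of relative entropy, while the paper computes the value at $P_\L^{z,\beta}$ and applies Jensen directly to $\log\int z^{N_\L}e^{-\beta H}f^{-1}\,dP$; these are the same argument, and your handling of the $\{H=+\infty\}$ and $P\not\preccurlyeq\pi_\L$ cases is slightly more explicit than the paper's.
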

\begin{proof}
First we note that

\begin{eqnarray}\label{calcul1}
& & \beta E_{P_\L^{z,\beta}}(H)-\log(z)E_{P_\L^{z,\beta}}(N_\L)+\I(P_\L^{z,\beta}|\pi_\L)\nonumber\\
& =& \beta \int H dP_\L^{z,\beta} -\log(z)E_{P_\L^{z,\beta}}(N_\L)+\int \log\left(z^{N_\L} \frac{e^{-\beta H}}{Z_\L^{z,\beta}}\right) dP_\L^{z,\beta}\nonumber\\
&=&  -\log(Z_\L^{z,\beta}).
\end{eqnarray}
This equality implies that the minimum of $\beta E_P(H)-\log(z)E_P(N_\L)+\I(P|\pi_\L)$ should be equal to $-\log(Z_\L^{z,\beta})$. So for any $P\in\P_\L$ such that $E_P(H)<+\infty$ and $\I(P|\pi_\L)<+\infty$ let us show that $\beta E_P(H)-\log(z)E_P(N_\L)+\I(P|\pi_\L)\ge -\log(Z_\L^{z,\beta})$ with equality if and only if $P=P_\L^{z,\beta}$. Let $f$ be the density of $P$ with respect to $\pi_\L$.

\begin{eqnarray*}
\log(Z_\L^{z,\beta}) & \ge & \log \left(\int_{\{f>0\}} z^{N_\L}e^{-\beta H}d\pi_\L\right)\\
 &=& \log \left(\int z^{N_\L}e^{-\beta H}f^{-1}dP\right)\\
 &\ge & \int \log \left(z^{N_\L}e^{-\beta H}f^{-1}\right)dP\\
 &=&-\beta E_P(H)-\log(z)E_P(N_\L)-\log(f)dP. 
\end{eqnarray*}
The second inequality, due to the Jensen's inequality, is an equality if and only if $z^{N_\L}e^{-\beta H}f^{-1}$ is $P$ a.s. constant which is equivalent to $P=P_\L^{z,\beta}$. The proposition is proved. 
\end{proof}

The parameters $z$ and $\beta$ allow to fit the mean number of points and the mean value of the energy under the GPP. Indeed when $z$ increases, the mean number of points increases as well and similarly when $\beta$ increases, the mean energy decreases. This phenomenon is expressed in the following proposition. The proof is a simple computation of derivatives. 

Let us note that it is not easy to tune both parameters simultaneously since the mean number of points changes when $\beta$ is modified (and vice versa). The estimation of the parameters $z$ and $\beta$ is discussed in the last Section \ref{SectionEstimation}.

\begin{proposition}
The function $ z\mapsto E_{P_\L^{z,\beta}}(N_\L)$ is continuous and differentiable, with derivative $z\mapsto Var_{P_\L^{z,\beta}}(N_\L)/z$ on $(0,+\infty)$. Similarly the function $ \beta\mapsto E_{P_\L^{z,\beta}}(H)$ is continuous and differentiable with derivative $\beta\mapsto -Var_{P_\L^{z,\beta}}(H)$ on $\R^+$.
\end{proposition}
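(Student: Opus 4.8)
The plan is to write each expectation as a ratio of integrals against $\pi_\L$, differentiate numerator and denominator separately, and recognize the quotient rule as a variance. Set $Z(z)=Z_\L^{z,\beta}$ and note $E_{P_\L^{z,\beta}}(N_\L)=Z(z)^{-1}\int N_\L z^{N_\L}e^{-\beta H}d\pi_\L$. Since $\partial_z z^{N_\L}=\frac{N_\L}{z}z^{N_\L}$, differentiating brings down one factor $N_\L/z$ in each integrand, giving $Z'(z)=\frac1z\int N_\L z^{N_\L}e^{-\beta H}d\pi_\L$ and a numerator derivative $\frac1z\int N_\L^2 z^{N_\L}e^{-\beta H}d\pi_\L$; the quotient rule then yields
\begin{equation*}
\frac{d}{dz}E_{P_\L^{z,\beta}}(N_\L)=\frac1z\Big(E_{P_\L^{z,\beta}}(N_\L^2)-E_{P_\L^{z,\beta}}(N_\L)^2\Big)=\frac{1}{z}Var_{P_\L^{z,\beta}}(N_\L).
\end{equation*}
The $\beta$-statement is identical, using $\partial_\beta e^{-\beta H}=-He^{-\beta H}$: differentiating brings down a factor $-H$, so the same algebra produces $\frac{d}{d\beta}E_{P_\L^{z,\beta}}(H)=-Var_{P_\L^{z,\beta}}(H)$.

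All the real content lies in justifying the differentiation under the integral sign, i.e.\ in showing that the relevant moments are finite locally uniformly in the parameter; this is where stability is used. On $\C_\L$ one has $N_{\R^d}=N_\L$, so $H\ge A N_\L$ and $z^{N_\L}e^{-\beta H}\le (ze^{-\beta A})^{N_\L}$. Since $N_\L$ is Poisson of parameter $\l^d(\L)$ under $\pi_\L$, every integral $\int N_\L^k(ze^{-\beta A})^{N_\L}d\pi_\L=e^{-\l^d(\L)}\sum_n n^k(ze^{-\beta A}\l^d(\L))^n/n!$ is finite, so all moments of $N_\L$ against the unnormalized density are finite. To differentiate in $z$ I would fix $[z_1,z_2]\subset(0,+\infty)$ and dominate $N_\L z^{N_\L-1}e^{-\beta H}\le z_1^{-1}N_\L(z_2e^{-\beta A})^{N_\L}$ uniformly on the interval; dominated convergence then legitimates differentiation under the integral and delivers continuity of the derivative as a bonus.

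The delicate point is dominating the energy moments for the $\beta$-derivative. For fixed $\beta>0$ and any integer $m$, the function $t\mapsto|t|^m e^{-\beta t}$ is bounded on $[0,+\infty)$, while on $\{H<0\}$ stability gives $|H|\le|A|N_\L$; splitting by the sign of $H$ produces a bound $|H|^m z^{N_\L}e^{-\beta H}\le C_{m,\beta}z^{N_\L}+(|A|N_\L)^m(ze^{\beta|A|})^{N_\L}$ whose $\pi_\L$-integral is finite by the Poisson computation above, and the estimate $e^{-\beta H}\le e^{-\beta_1 H}+e^{-\beta_2 H}$ makes it uniform over $[\beta_1,\beta_2]\subset(0,+\infty)$. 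The main obstacle is genuinely the boundary $\beta=0$: there $e^{-\beta H}$ no longer controls large positive values of $H$ and $t\mapsto t^m$ is unbounded, so the one-sided derivative at $0$ requires the additional integrability $E_{\pi_\L^z}(|H|)<+\infty$ and $E_{\pi_\L^z}(H^2)<+\infty$, which need not follow from stability alone (it already fails for $\varphi(r)=r^{-12}$). I would therefore establish the formulas on $(0,+\infty)$ and record this integrability as precisely what is needed to include $\beta=0$.
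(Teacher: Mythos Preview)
Your proof is correct and is exactly the ``simple computation of derivatives'' that the paper invokes in lieu of a proof; the paper gives no further argument, so there is nothing to compare at the level of method. Your care in justifying differentiation under the integral via the stability bound $z^{N_\L}e^{-\beta H}\le (ze^{-\beta A})^{N_\L}$ and Poisson moments is precisely what is needed, and your remark about the boundary $\beta=0$ is a genuine sharpening: the paper's statement on $\R^+$ silently presumes $E_{\pi_\L^z}(H^2)<\infty$, which, as you note, can fail (e.g.\ hard singular pair potentials), so restricting the formula to $(0,+\infty)$ unless that integrability is assumed is the right move.
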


Let us finish this section by explaining succinctly how to simulate such finite volume GPP. There are essentially two algorithms. The first one is based on a  MCMC procedure where GPP are viewed as equilibrium states of Markov chains. The simulation is obtained by letting run for a long enough time the Markov chain. The simulation is not exact and the error is essentially controlled via a monitoring approach (see \cite{MW}). The second one is a coupling from the past algorithm which provided exact simulations. However, the computation time is often very long and these algorithms are not really that used in practice (see \cite{MK}).

\subsection{DLR equations}\label{SectionDLRFiniteVolume}

The DLR equations are due to Dobrushin, Lanford and Ruelle and give the local conditional distributions of GPP in any bounded window $\Delta$ given the configuration outside $\Delta$. We need to define a family of local energy functions $(H_\Delta)_{\Delta\subset \Rd}$.

\begin{definition}\label{EnergyDelta}
For any bounded set $\Delta$ and any finite configuration $\g\in\C_f$ we define 
$$H_\D(\g) := H(\g)-H(\g_{\D^c}),$$
with the the convention $\infty-\infty=0$.
\end{definition}

The quantity $H_\D(\g)$ gives the energetic contribution of points in $\g_\D$ towards the computation of the energy of $\g$. As an example, let us compute these quantities in the setting of pairwise interaction introduced in \eqref{energypairwise}; 

$$
H_\D(\g) = \sum_{\{x,y\} \subset \g} \varphi(|x-y|) -  \sum_{\{x,y\}\subset \g_{\D^c}}  \varphi(|x-y|) =\sum_{\begin{array}{l}
\{x,y\} \subset \g\\
\{x,y\}\cap \D\neq \emptyset
\end{array}
} \varphi(|x-y|).$$

Note that $H_\D(\g)$ does not depend only on points in $\Delta$. However, trivially  we have $ H(\g)=H_\D(\g)+H(\g_{\D^c}),$ which shows that the energy of $\g$ is the sum of the energy $H_\D(\g)$ plus something which does not depends on $\g_\D$.

\begin{proposition} [DLR equations for finite volume GPP]\label{PropositionDLR}
Let $\D\subset \L$ be two bounded sets in $\Rd$ with $\l^d(\D)>0$. Then for $P_\L^{z,\beta}$-a.s.  all $\g_{\D^c}$ 
\begin{equation}\label{DLRfiniteVolume}
P_\L^{z,\beta}(d\g_\D|\g_{\D^c}) = \frac{1}{Z_{\D}^{z,\beta}(\g_{\D^c})} z^{N_\D(\g)}e^{-\beta H_\D(\g)} \pi_\D(d\g_\D),
\end{equation}
where $Z_{\D}^{z,\beta}(\g_{\D^c})$ is the normalizing constant $\int z^{N_\D(\g)}e^{-\beta H_\D(\g)} \pi_\D(d\g_\D)$. In particular the right term in \eqref{DLRfiniteVolume} does not depend on $\L$.
\end{proposition}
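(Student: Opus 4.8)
The plan is to realize the right-hand side of \eqref{DLRfiniteVolume} as a regular conditional distribution of $P_\L^{z,\beta}$ by testing against arbitrary bounded measurable functions and exploiting two facts: the spatial independence of the Poisson process over disjoint regions, and the additive splitting $H(\g)=H_\D(\g)+H(\g_{\D^c})$ noted just before the statement. Since $P_\L^{z,\beta}$ lives on $\C_\L$, every configuration decomposes as $\g=\g_\D\cup\g_{\D^c}$ with $\g_{\D^c}=\g_{\L\setminus\D}$ finite, and the restriction property of $\pi_\L$ provides the product factorization $\pi_\L=\pi_\D\otimes\pi_{\L\setminus\D}$ along this splitting.

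First I would fix a bounded measurable $F\colon\C_\L\to\R$ and expand
\[
\int F\,dP_\L^{z,\beta} = \frac{1}{Z_\L^{z,\beta}} \int F(\g)\, z^{N_\L(\g)} e^{-\beta H(\g)}\,\pi_\L(d\g).
\]
Inserting the factorization of $\pi_\L$, writing $z^{N_\L}=z^{N_\D}z^{N_{\L\setminus\D}}$ and $e^{-\beta H}=e^{-\beta H_\D}e^{-\beta H(\g_{\D^c})}$, and applying Fubini (licit since $F$ is bounded and $Z_\L^{z,\beta}<\infty$), the integrand separates into a factor $z^{N_{\L\setminus\D}}e^{-\beta H(\g_{\D^c})}$ depending only on $\g_{\D^c}$ and the factor $z^{N_\D}e^{-\beta H_\D}$ carrying all the $\g_\D$-dependence. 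Multiplying and dividing the inner $\g_\D$-integral by $Z_\D^{z,\beta}(\g_{\D^c})$ rewrites it as an expectation against the kernel appearing in \eqref{DLRfiniteVolume}.

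Taking $F$ of the form $G(\g_{\D^c})$ then identifies the marginal $\bar P$ of $P_\L^{z,\beta}$ under $\g\mapsto\g_{\D^c}$, and the computation above becomes exactly $\int F\,dP_\L^{z,\beta}=\int\big(\int F(\g_\D\cup\g_{\D^c})\,Z_\D^{z,\beta}(\g_{\D^c})^{-1}z^{N_\D}e^{-\beta H_\D}\,\pi_\D(d\g_\D)\big)\,\bar P(d\g_{\D^c})$, which is the defining relation of the regular conditional distribution $P_\L^{z,\beta}(d\g_\D\mid\g_{\D^c})$. Independence from $\L$ is then manifest, as neither $z^{N_\D}e^{-\beta H_\D}$ nor $Z_\D^{z,\beta}(\g_{\D^c})$ refers to $\L$.

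The main obstacle is the well-definedness of $Z_\D^{z,\beta}(\g_{\D^c})$ for $\bar P$-almost every $\g_{\D^c}$, needed to legitimate the division. Positivity is clear from the empty configuration, since $H_\D(\g_{\D^c})=0$ forces the $\pi_\D$-atom at $\emptyset$ to contribute $e^{-\l^d(\D)}>0$. For finiteness I would mimic the a priori bound on $Z_\L^{z,\beta}$: stability gives $H_\D(\g)\ge A N_\D(\g)+\big(A N_{\L\setminus\D}(\g)-H(\g_{\D^c})\big)$, where the bracket is a constant in $\g_\D$ that is finite as soon as $H(\g_{\D^c})<\infty$. The latter holds $P_\L^{z,\beta}$-almost surely, because the density forces $H(\g)<\infty$ a.s. and heredity then propagates finiteness to $H(\g_{\D^c})$; the surviving $\g_\D$-integral is controlled by $\exp\!\big(\l^d(\D)(ze^{-\beta A}-1)\big)$ exactly as for the partition function.
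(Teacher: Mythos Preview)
Your proof is correct and follows essentially the same approach as the paper: factor the density via $\pi_\L=\pi_\D\otimes\pi_{\L\setminus\D}$, $z^{N_\L}=z^{N_\D}z^{N_{\L\setminus\D}}$, and $H=H_\D+H(\cdot_{\D^c})$, then read off the conditional density. Your additional care about the $\bar P$-a.s.\ finiteness and positivity of $Z_\D^{z,\beta}(\g_{\D^c})$ (via heredity and stability) is a welcome detail that the paper leaves implicit.
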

\begin{proof}

From the definition of $H_\D$ and the stochastic properties of the Poisson point process we have

\begin{eqnarray*}
P_\L^{z,\beta}(d\g)&=&\frac{1}{Z_\L^{z,\beta}} z^{N_\L(\g)}e^{-\beta H(\g)} \pi_\L(d\g)\\
&=& \frac{1}{Z_\L^{z,\beta}} z^{N_\D(\g)}e^{-\beta H_\Delta(\g)}z^{N_{\L\backslash\D}(\g)}e^{-\beta H(\g_{\L\backslash \Delta})} \pi_\D(d\g_\D)\pi_{\L\backslash \D}(d\g_{\L\backslash \D}).
\end{eqnarray*}
This expression ensures that the unnormalized conditional density of $P_\L^{z,\beta}(d\g_\D|\g_{\D^c})$ with respect to $\pi_\D(d\g_\D)$ is $\g_\D\mapsto z^{N_\D(\g)}e^{-\beta H_\Delta(\g)}$. The  normalization is necessary $Z_{\D}^{z,\beta}(\g_{\D^c})$ and the proposition is proved.
\end{proof}
%

The DLR equations give the local conditional marginal distributions of GPP. They are the main tool to understand the local description of $P_\L^{z,\beta}$, in particular when $\L$ is large. Note that the local marginal distributions (not conditional) are  in general not accessible. It is a difficult point of the theory of GPP. This fact will be reinforced in the infinite volume regime, where the local distributions can be non-unique. 

 The DLR equations have a major issue due the the intractable normalization constant $Z_{\D}^{z,\beta}(\g_{\D^c})$. In the next section the problem is partially solved via the GNZ equations.

\subsection{GNZ equations}\label{SectionGNZFiniteVolume}

The GNZ equations are due to Georgii, Nguyen and Zessin and have been introduced first in \cite{NZ}. They generalize the Slivnyak-Mecke formulas for Poisson point processes. In this section we present and prove these equations. We need first to define the energy of a point inside a configuration.

\begin{definition}\label{localenergy} Let $\g\in\C_f$ be a finite configuration and $x\in\Rd$. Then the local energy of $x$ in $\g$ is defined by
$$ h(x,\g)=H(\{x\}\cup\g)-H(\g),$$
\end{definition}
with the convention $+\infty-(+\infty)=0$.
Note that if $x\in\g$ then $h(x,\g)=0$.

\begin{proposition}[GNZ equations]\label{PropGNZfiniteVolume}
For any positive measurable function $f$ from $\Rd\times\C_f$ to $\R$,

\begin{equation}\label{GNZFV}
\int \sum_{x\in\g} f(x,\g\backslash \{x\}) P_\L^{z,\beta}(d\g) =z \int \int_\L f(x,\g) e^{-\beta h(x,\g)} dxP_\L^{z,\beta}(d\g).
\end{equation}
\end{proposition}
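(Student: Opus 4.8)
The plan is to prove the GNZ equations by reducing them to the Slivnyak–Mecke formula for the Poisson point process, using the explicit density of $P_\L^{z,\beta}$ with respect to $\pi_\L$. Recall that the Slivnyak–Mecke (or Campbell–Mecke) formula for the Poisson process $\pi_\L^z$ with intensity $z\l^d$ states that for any positive measurable $g$ on $\Rd\times\C_f$,
\begin{equation*}
\int \sum_{x\in\g} g(x,\g\backslash\{x\})\,\pi_\L(d\g) = \int\int_\L g(x,\g)\,dx\,\pi_\L(d\g),
\end{equation*}
and more generally with $\pi_\L^z$ one gains a factor $z$ on the right. This is the only nontrivial input; I would cite it as the defining integral characterization of the Poisson process (it follows directly from the two bullet-point properties in Section~\ref{SectionPoisson}).

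First I would rewrite the left-hand side of \eqref{GNZFV} by substituting the density $\frac{dP_\L^{z,\beta}}{d\pi_\L}(\g)=\frac{1}{Z_\L^{z,\beta}}z^{N_\L(\g)}e^{-\beta H(\g)}$, obtaining
\begin{equation*}
\frac{1}{Z_\L^{z,\beta}}\int \sum_{x\in\g} f(x,\g\backslash\{x\})\,z^{N_\L(\g)}e^{-\beta H(\g)}\,\pi_\L(d\g).
\end{equation*}
The key step is to apply Slivnyak–Mecke to the integrand $g(x,\eta):=f(x,\eta)\,z^{N_\L(\eta\cup\{x\})}e^{-\beta H(\eta\cup\{x\})}$, which lets me move the summation over points of $\g$ into an external integral over $x\in\L$ and an expectation over a configuration $\g$ not containing $x$. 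Since for $x\notin\g$ one has $N_\L(\g\cup\{x\})=N_\L(\g)+1$ (for $x\in\L$) and, by Definition~\ref{localenergy}, $H(\g\cup\{x\})=H(\g)+h(x,\g)$, the combined weight factorizes as $z\cdot z^{N_\L(\g)}e^{-\beta H(\g)}e^{-\beta h(x,\g)}$.

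After this substitution the right-hand side becomes
\begin{equation*}
\frac{z}{Z_\L^{z,\beta}}\int\int_\L f(x,\g)\,e^{-\beta h(x,\g)}\,z^{N_\L(\g)}e^{-\beta H(\g)}\,dx\,\pi_\L(d\g),
\end{equation*}
and recognizing $\frac{1}{Z_\L^{z,\beta}}z^{N_\L(\g)}e^{-\beta H(\g)}\pi_\L(d\g)=P_\L^{z,\beta}(d\g)$ yields exactly the right-hand side of \eqref{GNZFV}. I expect the main obstacle to be bookkeeping rather than conceptual: one must be careful that the added point $x$ lies in $\L$ (hence the integration domain $\L$ and the single factor of $z$ from $N_\L$ increasing by one), that the convention $h(x,\g)=0$ when $x\in\g$ is consistent with the a.s.\ absence of repeated points, and that the hereditary and stability assumptions guarantee all quantities are well-defined, with $h(x,\g)$ possibly $+\infty$ handled by the convention that $e^{-\beta h(x,\g)}=0$ there. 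Since $f$ is assumed positive, no integrability issues arise and Tonelli's theorem justifies all interchanges.
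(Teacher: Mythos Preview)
Your argument is correct and is essentially the same as the paper's, just packaged differently. The paper does not invoke the Slivnyak--Mecke formula as an external result; instead it expands $\pi_\L$ explicitly as the series $e^{-\lambda^d(\L)}\sum_{n\ge 0}\frac{1}{n!}\int_{\L^n}\cdots\,dx_1\cdots dx_n$, pulls out one point $x_k$ from the sum $\sum_{k=1}^n$, relabels, and recombines the remaining series back into $\pi_\L$. This is precisely a self-contained proof of the Slivnyak--Mecke identity applied to your function $g$, so the underlying mechanism---extract a point, use $N_\L(\g)=N_\L(\g\setminus\{x\})+1$ and $H(\g)=H(\g\setminus\{x\})+h(x,\g\setminus\{x\})$, recombine---is identical. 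Your version is shorter and cleaner if Slivnyak--Mecke is taken as known; the paper's version is more elementary and keeps the exposition self-contained.
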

 
\begin{proof}

Let us decompose the left term in \eqref{GNZFV}.

\begin{eqnarray*}
& & \int \sum_{x\in\g} f(x,\g\backslash \{x\}) P_\L^{z,\beta}(d\g)\\
&=& \frac{1}{Z_\L^{z,\beta}} \int \sum_{x\in\g} f(x,\g\backslash \{x\}) z^{N_\L(\g)}e^{-\beta H(\g)} \pi_\L(d\g)\\
&=& \frac{e^{-\l^d(\L)}}{Z_\L^{z,\beta}} \sum_{n=1}^{+\infty} \frac{z^n}{n!}  \sum_{k=1}^n \int_{\L^k} f(x_k,\{x_1,\ldots, x_n\}\backslash \{x_k\}) e^{-\beta H(\{x_1,\ldots, x_n\})}  dx_1\ldots dx_n\\
&=& \frac{e^{-\l^d(\L)}}{Z_\L^{z,\beta}} \sum_{n=1}^{+\infty} \frac{z^n}{(n-1)!} \int_{\L^k} f(x,\{x_1,\ldots, x_{n-1}\}) e^{-\beta H(\{x_1,\ldots, x_{n-1}\})}\\
& & \qquad \qquad \qquad \qquad \qquad e^{-\beta h(x,\{x_1,\ldots, x_{n-1}\})}  dx_1\ldots dx_{n-1} dx\\
&=& \frac{z}{Z_\L^{z,\beta}} \int_\L \int f(x,\g)z^{N_\L(\g)} e^{-\beta H(\g)}e^{-\beta h(x,\g)} \pi_\L(d\g)dx\\
&=&z \int \int_\L f(x,\g) e^{-\beta h(x,\g)} dxP_\L^{z,\beta}(d\g).
\end{eqnarray*}
\end{proof}

As usual the function $f$ in $\eqref{GNZFV}$ can be chosen without a constant sign. We just need to check that both terms in $\eqref{GNZFV}$ are integrable.

In the following proposition we show that the equations GNZ $\eqref{GNZFV}$ characterize the probability measure $P_\L^{z,\beta}$.

\begin{proposition}\label{PropGNZreverse}
Let $\L\subset \Rd$ bounded such that $\l^d(\L)>0$. Let $P$ be a probability measure on $\C_\L$ such that for  any positive measurable function $f$ from $\Rd\times\C_f$ to $\R$ 

$$
\int \sum_{x\in\g} f(x,\g\backslash \{x\}) P(d\g) =z \int \int_\L f(x,\g) e^{-\beta h(x,\g)} dxP(d\g).$$
Then it holds that $P=P_\L^{z,\beta}$.
\end{proposition}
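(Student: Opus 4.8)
The plan is to use the GNZ equations to derive a recursion that determines, for every integer $n\ge 0$, the restriction of $P$ to the event $\{N_\L=n\}$ from its restriction to $\{N_\L=n-1\}$. Since a probability measure on $\C_\L$ is entirely characterized by the collection of these restrictions, the recursion together with the normalization $\sum_{n}P(N_\L=n)=1$ will pin $P$ down uniquely; as $P_\L^{z,\beta}$ satisfies the same GNZ equations by Proposition \ref{PropGNZfiniteVolume} and is a probability measure, the identity $P=P_\L^{z,\beta}$ follows immediately.

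To obtain the recursion, fix $n\ge 1$ and a symmetric nonnegative measurable function $\phi$ on the $n$-point configurations, and apply the GNZ equation to the test function $f(x,\g)=\1_{\{N_\L(\g)=n-1\}}\,\phi(\{x\}\cup\g)$. On the left-hand side, for $x\in\g$ one has $N_\L(\g\setminus\{x\})=n-1$ if and only if $N_\L(\g)=n$, and $\{x\}\cup(\g\setminus\{x\})=\g$, so the inner sum collapses to $n\,\phi(\g)\1_{\{N_\L(\g)=n\}}$. On the right-hand side only configurations with $n-1$ points contribute. This yields
\begin{equation*}
n\int_{\{N_\L=n\}}\phi\,dP \;=\; z\int_{\{N_\L=n-1\}}\Big(\int_\L \phi(\{x\}\cup\g)\,e^{-\beta h(x,\g)}\,dx\Big)\,P(d\g).
\end{equation*}
As $\phi$ ranges over all symmetric nonnegative functions, the right-hand side is a nonnegative linear functional of $\phi$ depending only on $P|_{\{N_\L=n-1\}}$, so it determines the measure $P|_{\{N_\L=n\}}$ completely.

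Next I would run the induction. The base case $n=0$ reduces to the single number $P(\{\emptyset\})$. Assuming $P|_{\{N_\L=n-1\}}$ has the \emph{Gibbs form}, namely $\int_{\{N_\L=n-1\}}\psi\,dP=\frac{K}{(n-1)!}\int_{\L^{n-1}}\psi\,z^{\,n-1}e^{-\beta H}\,dx_1\cdots dx_{n-1}$ for a constant $K$ (the base case holding with $K$ fixed by $Ke^{-\beta H(\emptyset)}=P(\{\emptyset\})$), I substitute into the recursion and use the key identity $e^{-\beta h(x,\g)}e^{-\beta H(\g)}=e^{-\beta H(\{x\}\cup\g)}$, valid with the stated conventions on $\infty-\infty$ thanks to heredity, together with the symmetry of the integrand to relabel the $n$ integration variables. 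This reproduces exactly the same Gibbs form at level $n$ with the \emph{same} constant $K$. Hence by induction every $P|_{\{N_\L=n\}}$ is absolutely continuous with density proportional to $z^{N_\L}e^{-\beta H}$ and common constant $K$; in particular $P$ charges no infinite-energy configuration.

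Finally, normalization $1=\sum_{n\ge 0}P(N_\L=n)=K\sum_{n\ge 0}\frac{1}{n!}\int_{\L^n}z^n e^{-\beta H}\,dx_1\cdots dx_n$ determines $K$ uniquely, the series converging by the stability bound $H\ge A N_{\R^d}$ exactly as in the estimate bounding $Z_\L^{z,\beta}$. Thus $P$ is uniquely determined and equals $P_\L^{z,\beta}$. The delicate points are purely the combinatorial bookkeeping that produces the factor $n$ and collapses the test function on the left, and the check that the $\infty-\infty$ conventions make the identity $e^{-\beta h}e^{-\beta H(\g)}=e^{-\beta H(\{x\}\cup\g)}$ hold everywhere; verifying that the recursion propagates the Gibbs form is the main, though mild, obstacle, the remaining estimates being routine.
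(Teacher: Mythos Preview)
Your proof is correct and takes a genuinely different route from the paper's. The paper proceeds by a change of measure: it sets $Q=\1_{\{H<+\infty\}}z^{-N_\L}e^{\beta H}P$, checks that $Q$ satisfies the Slivnyak--Mecke identity on the set of allowed configurations, and then invokes the classical characterization of the Poisson process by that identity (cited as an external result) to conclude that $Q$ is proportional to $\pi_\L$, hence $P=P_\L^{z,\beta}$. Your argument instead stratifies by $\{N_\L=n\}$ and extracts from the GNZ equation, with the test function $f(x,\g)=\1_{\{N_\L(\g)=n-1\}}\phi(\{x\}\cup\g)$, a recursion determining $P|_{\{N_\L=n\}}$ from $P|_{\{N_\L=n-1\}}$; an induction on $n$ then propagates the Gibbs density with a single constant fixed at the end by normalization.

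What each approach buys: the paper's change-of-measure is shorter and conceptually clean, but relies on the Slivnyak--Mecke characterization as a black box. Your induction is more elementary and entirely self-contained; it is essentially the proof of that characterization unfolded in the Gibbs setting. The only small point worth making explicit in your write-up is that the normalization step forces $K>0$, hence $P(\{\emptyset\})>0$ a posteriori (otherwise $K=0$ and $P$ would be the zero measure), which is consistent with the fact that $P_\L^{z,\beta}(\{\emptyset\})>0$.
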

\begin{proof}
Let us consider the measure $Q=\1_{\{H<+\infty\}}z^{-N_\L}e^{\beta H} P$. Then

\begin{eqnarray*}
& &\int \sum_{x\in\g} f(x,\g\backslash \{x\}) Q(d\g) \\
&=& \int \sum_{x\in\g} f(x,\g\backslash \{x\}) \1_{\{H(\g)<+\infty\}}z^{-N_\L(\g)}e^{\beta H(\g)} P(d\g)\\
& =&  z^{-1}\int \sum_{x\in\g} f(x,\g\backslash \{x\}) \1_{\{H(\g\backslash \{x\})<+\infty\}}\1_{\{h(x,\g\backslash \{x\})<+\infty\}}\\
& & \qquad  z^{-N_\L(\g\backslash \{x\})}e^{\beta H(\g\backslash \{x\})}e^{\beta h(x,\g\backslash \{x\})}P(d\g)\\
&=& \int \int_\L f(x,\g)  \1_{\{H(\g)<+\infty\}}\1_{\{h(x,\g)<+\infty\}} e^{-\beta h(x,\g)}z^{-N_\L(\g)}e^{\beta H(\g)}e^{\beta h(x,\g)} dxP(d\g)\\
& =&  \int \int_\L f(x,\g) \1_{\{h(x,\g)<+\infty\}} dxQ(d\g).
\end{eqnarray*}

We deduce that $Q$ satisfies the Slivnyak-Mecke formula on  $\{\g\in\C_\L, H(\g)<+\infty\}$. It is well-known (see \cite{MKM} for instance) that it implies that the measure $Q$ (after normalization) is the Poisson point process $\pi_\L$ restricted to $\{\g\in\C_\L, H(\g)<+\infty\}$. The proposition is proved.

\end{proof}

These last two propositions show that the GNZ equations contain completely the informations on $P_\L^{z,\beta}$. Note again that the normalization constant $Z_\L^{z,\beta}$ is not present in the equations.

\subsection{Ruelle estimates}\label{SectionRuelleestimates}

In this section we present Ruelle estimates in the context of superstable and lower regular energy functions. These estimates are technical and we refer to the original paper \cite{Ruelle70} for the proofs.

\begin{definition}\label{superstabilty}
 An energy function $H$ is said superstable if $H=H_1+H_2$ where $H_1$ is an energy function (see Definition \eqref{Energyfuntion}) and $H_2$ is a pairwise energy function defined in $\eqref{energypairwise}$  with a non-negative continuous pair potential $\varphi$ such that $\varphi(0)>0$. The energy function $H$ is said lower regular if there exists a summable decreasing sequence of positive reals $(\psi_k)_{k\ge 0}$ (i.e. $\sum_{k=0}^{+\infty} \psi_k<+\infty $) such that for any finite configurations $\g^1$ and $\g^2$

 \begin{equation} \label{lowerregular}
H(\g^1\cup\g^2)-H(\g^1)-H(\g^2)\ge -\sum_{k,k'\in\Z^d} \psi_{\Vert k-k'\Vert}\Big(N^2_{[k+[0,1]^d]}(\g^1)+N^2_{[k'+[0,1]^d]}(\g^2)\Big).
\end{equation}
\end{definition}

Let us give the main example of superstable and lower regular energy function.

\begin{proposition} [Proposition 1.3 \cite{Ruelle70}]  Let $H$ be a pairwise energy function with a pair potential $\varphi=\varphi_1+\varphi_2$ where $\varphi_1$ is stable and $\varphi_2$ is non-negative continuous with $\varphi_2(0)>0$. Moreover, we assume that there exists a positive decreasing function $\psi$ from $\R^+$ to $\R$ such that 
$$\int_0^{+\infty} r^{d-1}\psi(r)dr<+\infty$$

and such that for any $x\in\R$, $\varphi(x)\ge -\psi(\Vert x\Vert)$.
Then the energy function $H$ is superstable and lower regular. 
\end{proposition}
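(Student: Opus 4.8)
The statement asserts two things — that $H$ is superstable and that $H$ is lower regular — and I would prove them separately, the first being essentially immediate and the second requiring a genuine idea.

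For superstability I would simply read off the decomposition from the data. Let $H_1$ be the pairwise energy function with pair potential $\varphi_1$ and $H_2$ the pairwise energy function with pair potential $\varphi_2$; by the additivity of the sum in \eqref{energypairwise} we have $H=H_1+H_2$. By hypothesis $\varphi_2$ is non-negative, continuous and $\varphi_2(0)>0$, which is exactly what Definition \ref{superstabilty} requires of the $H_2$ part. It remains to check that $H_1$ is a bona fide energy function: non-degeneracy and heredity are automatic for any pairwise interaction (as noted after \eqref{energypairwise}), and stability of $H_1$ is assumed. Since moreover $H_2\ge 0$ gives $H\ge H_1\ge A\,N_{\Rd}$, the function $H$ is itself stable and hence an energy function. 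This yields superstability directly.

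The crux is lower regularity, and the key observation — which I expect to be the main obstacle if overlooked — is that a \emph{stable} pair potential is automatically bounded below. Applying $H_1(\g)\ge A\,N_{\Rd}(\g)$ to a two-point configuration $\g=\{x,y\}$ gives $\varphi_1(|x-y|)\ge 2A$ for all $x\neq y$, and with $\varphi_2\ge 0$ this produces the pointwise bound $\varphi\ge 2A$. If $A\ge 0$ then $\varphi\ge 0$, the interaction energy is non-negative and \eqref{lowerregular} is trivial, so I assume $A<0$ and set $M:=-2A>0$. Combining the uniform bound with the tail bound $\varphi(\cdot)\ge-\psi(\Vert\cdot\Vert)$, I would replace $\psi$ by $\bar\psi:=\min(\psi,M)$, which is still positive and decreasing, is now \emph{bounded} (by $M$), satisfies $\bar\psi\le\psi$ so that $\int_0^{+\infty}r^{d-1}\bar\psi(r)\,dr<+\infty$, and obeys $\varphi\ge-\bar\psi$. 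This trade of the possibly unbounded $\psi$ for a bounded $\bar\psi$ is what will let me control the same-cube and neighbouring-cube contributions, where $\psi$ itself could blow up near $0$.

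With this in hand the rest is the standard cube estimate. Taking $\g^1,\g^2$ with disjoint supports (the substantive case, since for overlapping configurations $H(\g^1\cup\g^2)-H(\g^1)-H(\g^2)$ is an interaction energy subject to $\infty-\infty$ issues), the pair sum splits into intra- and inter-terms, giving
$$H(\g^1\cup\g^2)-H(\g^1)-H(\g^2)=\sum_{x\in\g^1,\,y\in\g^2}\varphi(|x-y|)\ge-\sum_{x\in\g^1,\,y\in\g^2}\bar\psi(|x-y|).$$
I then partition $\Rd$ into the half-open unit cubes $C_k=k+[0,1)^d$, $k\in\Z^d$, and group points by cube (counts in $C_k$ are dominated by the closed-cube counts $N_{[k+[0,1]^d]}$ appearing in \eqref{lowerregular}). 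For $x\in C_k$, $y\in C_{k'}$ with $n:=\Vert k-k'\Vert_\infty$ one has $|x-y|\ge (n-1)_+$, hence $\bar\psi(|x-y|)\le\psi_n$ where $\psi_n:=M$ for $n\le 1$ and $\psi_n:=\bar\psi(n-1)$ for $n\ge 2$; this sequence is decreasing since $\bar\psi$ is decreasing and $\le M$. Writing $N^1_k,N^2_{k'}$ for the cube counts and using $N^1_kN^2_{k'}\le\tfrac12((N^1_k)^2+(N^2_{k'})^2)\le (N^1_k)^2+(N^2_{k'})^2$,
$$\sum_{x\in\g^1,\,y\in\g^2}\bar\psi(|x-y|)\le\sum_{k,k'}\psi_{\Vert k-k'\Vert}\,N^1_kN^2_{k'}\le\sum_{k,k'}\psi_{\Vert k-k'\Vert}\Big(N^2_{[k+[0,1]^d]}(\g^1)+N^2_{[k'+[0,1]^d]}(\g^2)\Big),$$
which is precisely the right-hand side of \eqref{lowerregular}. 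Finally, summability $\sum_{n\ge 0}\psi_n<+\infty$ follows by comparing the decreasing $\bar\psi$ with its integral, $\sum_{j\ge 1}\bar\psi(j)\le\int_0^{+\infty}\bar\psi(r)\,dr<+\infty$, the integral being finite because $\bar\psi\le M$ near $0$ while $\bar\psi(r)\le r^{d-1}\bar\psi(r)$ for $r\ge 1$. This establishes lower regularity and completes the proof.
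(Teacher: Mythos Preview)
The paper does not supply its own proof of this proposition; it merely cites Proposition~1.3 of Ruelle~\cite{Ruelle70} and moves on. There is therefore no in-text argument to compare against.

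Your proof is correct and is essentially the classical argument. The two points worth noting are: (i) the observation that a stable pair potential satisfies $\varphi\ge 2A$, which you exploit to truncate $\psi$ to the bounded $\bar\psi=\min(\psi,M)$ and thereby control the same-cube and adjacent-cube contributions where $\psi$ itself may diverge --- this is exactly the device Ruelle uses; and (ii) the cube decomposition combined with $N^1_kN^2_{k'}\le (N^1_k)^2+(N^2_{k'})^2$, which reproduces the form of \eqref{lowerregular}. Your summability check for $(\psi_n)$ via the integral comparison is clean. One minor remark: the lower-regularity inequality in Definition~\ref{superstabilty} is in practice only ever invoked with $\g^1,\g^2$ living in disjoint regions, so your restriction to the disjoint case is the substantive one; for overlapping configurations the expression $H(\g^1\cup\g^2)-H(\g^1)-H(\g^2)$ picks up an extra $-H(\g^1\cap\g^2)$ term that is not controlled by the hypotheses without further convention, but this is a quibble about the paper's phrasing rather than a defect in your argument.
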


In particular, the Lennard-Jones pair potential or the Strauss pair potential defined in Section \ref{SectionEnergy} are superstable and lower regular. Note also that all geometric energy functions presented in Section \ref{SectionEnergy} are not superstable.  

\begin{proposition}[corollary 2.9 \cite{Ruelle70}]
Let $H$ be a superstable and lower regular energy function. Let $z>0$ and $\beta>0$ be fixed. Then for any bounded subset $\Delta\subset \R^d$ with $\lambda^d(\Delta)>0$ there exist two positive constants $c_1,c_2$ such that for any bounded set $\Lambda$ and $k\ge 0$

\begin{equation}\label{RuelleEstimates}
P_\L^{z,\beta} (N_\D\ge k)\le c_1e^{-c_2k^2}.
\end{equation}
\end{proposition}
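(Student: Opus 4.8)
The plan is to reduce the estimate to a single reference cell and then to exploit the quadratic growth of the energy guaranteed by superstability. Fix a lattice of congruent half-open cubes $(\D_k)_{k\in\Z^d}$ at a scale fine enough that within each cube the pair potential of the superstable part exceeds a positive constant (rephrasing \eqref{lowerregular} at this scale, with $\psi$ adjusted accordingly), and write $N_k:=N_{\D_k}$. First I would dispose of the geometry of $\D$: since $\D$ is bounded it meets only finitely many cells, say $M$ of them, so on $\{N_\D\ge k\}$ at least one of these cells carries $\ge k/M$ points. A union bound then gives $P_\L^{z,\beta}(N_\D\ge k)\le M\max_k P_\L^{z,\beta}(N_k\ge k/M)$, so it suffices to prove $P_\L^{z,\beta}(N_k\ge m)\le c_1'e^{-c_2'm^2}$ for a single cell, with constants independent of the cell and of $\L$; the exponents for $\D$ are recovered by rescaling $k\mapsto k/M$.

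Next I would extract from superstability the quantitative quadratic bound. Writing $H=H_1+H_2$ with $H_1$ stable ($H_1\ge A'N_{\Rd}$) and $H_2$ the non-negative pairwise part, the continuity of $\varphi$ with $\varphi(0)>0$ yields $c>0$ with $\varphi\ge c$ across the diameter of a cell; since $\varphi\ge0$ keeps the cross-cell pairs non-negative, summing the within-cell pairs gives
\begin{equation}
H(\g)\ \ge\ \sum_{k}\bigl(B\,N_k^2(\g)-A\,N_k(\g)\bigr),\qquad B=\tfrac{c}{2}>0 .
\end{equation}
This is the mechanism producing the $e^{-c_2k^2}$ decay: a cell holding $n$ points costs energy of order $n^2$.

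Then I would localise via the DLR equations of Proposition~\ref{PropositionDLR}. Conditioning on the exterior $\g_{\D_j^c}$, the denominator $Z_{\D_j}^{z,\beta}(\g_{\D_j^c})$ is bounded below, uniformly in the exterior, by the empty-cell term $e^{-\l^d(\D_j)}$ (since $H_{\D_j}$ vanishes on configurations empty in $\D_j$). For the numerator I would bound $H_{\D_j}(\g)=H(\g)-H(\g_{\D_j^c})$ below by combining the quadratic bound with \eqref{lowerregular} applied to $\g^1=\g_{\D_j}$, $\g^2=\g_{\D_j^c}$, which yields
\begin{equation}
H_{\D_j}(\g)\ \ge\ (B-\Psi)\,N_j^2-A\,N_j-\Psi\,W,\qquad \Psi:=\sum_{k\in\Z^d}\psi_{\Vert k\Vert},\quad W:=\sum_{k}N_{\D_k}^2(\g_{\D_j^c}).
\end{equation}
Hence the conditional tail is dominated by $\bigl(\sum_{n\ge m}(ze^{\beta A})^n\,n!^{-1}e^{-\beta(B-\Psi)n^2}\bigr)\,e^{\beta\Psi W}$, and integrating over $\g_{\D_j^c}$ leaves $c_1'e^{-c_2'm^2}\,E_{P_\L^{z,\beta}}\!\bigl[e^{\beta\Psi W}\bigr]$.

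The main obstacle is precisely the uniform control of the exterior exponential moment $E_{P_\L^{z,\beta}}[e^{\beta\Psi W}]$: here $W$ is an infinite sum of squared cell counts, so bounding its exponential moment is essentially a stronger form of the estimate being proved, and one-step conditioning is circular (moreover the sign condition $B>\Psi$ need not hold at a fixed scale). Breaking this circularity is the technical heart of Ruelle's argument, and the reason the proof is only cited here: one establishes, uniformly in $\L$, a self-consistent bound on the exponential moments of the occupation numbers $N_k^2$ by summing the boundary couplings cell by cell and using $\sum_{k}\psi_{\Vert k\Vert}<\infty$ to close a fixed-point/bootstrap estimate (equivalently, Ruelle's uniform correlation bounds $\rho_\L^{(n)}\le\xi^n$). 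Granting $E_{P_\L^{z,\beta}}[e^{\beta\Psi W}]\le\Xi<\infty$ uniformly in $\L$, the previous display gives $P_\L^{z,\beta}(N_j\ge m)\le\Xi\,c_1'e^{-c_2'm^2}$, and the first-step reduction yields \eqref{RuelleEstimates} with constants depending on $z,\beta,\D$ but not on $\L$.
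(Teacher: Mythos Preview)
The paper does not give its own proof of this proposition: the introduction to Section~\ref{SectionRuelleestimates} states explicitly that ``these estimates are technical and we refer to the original paper \cite{Ruelle70} for the proofs,'' and the proposition is stated as a citation of Corollary~2.9 there with no accompanying argument. So there is nothing in the paper to compare your proposal against.

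Your sketch is therefore more than the paper offers, and it is an honest outline of the mechanism behind Ruelle's estimate: the reduction to a single lattice cell, the quadratic lower bound $H(\g)\ge\sum_k(BN_k^2-AN_k)$ extracted from superstability, and the DLR decomposition isolating a target cell. You also correctly flag the real obstruction, namely that controlling $E_{P_\L^{z,\beta}}[e^{\beta\Psi W}]$ with $W=\sum_k N_k^2(\g_{\D_j^c})$ is essentially a stronger form of the estimate itself, so that a one-step conditioning is circular and a self-consistent bootstrap (Ruelle's uniform bound on correlation functions) is needed to close the argument. That diagnosis is accurate, and it is exactly why the mini-course defers to \cite{Ruelle70} rather than attempting a self-contained proof. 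One minor caution: your displayed inequality requires $B>\Psi$, which you note may fail at a fixed cell scale; in Ruelle's actual proof this is handled not by rescaling the cell but by a more delicate summation over nested regions, so your parenthetical remark about the sign condition is well placed.
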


In particular, Ruelle estimates \eqref{RuelleEstimates} ensure that  the random variable $N_\D$ admits exponential moments for all orders under $P_\L^{z,\beta}$. Surprisingly, the variate  $N^2_\D$ admits exponential moments for small orders. This last fact is not true under the Poisson point process $\pi_\L^z=P_\L^{z,0}$. The interaction between the points improves the integrability properties of the GPP with respect to the Poisson point process.

\section{Infinite volume Gibbs point processes}

In this section we present the theory of infinite volume GPP corresponding to the case "$\L=\Rd$" of the previous section.  Obviously, a definition inspired by \eqref{DefGPP} does not work since the energy of an infinite configuration $\g$ is meaningless. A natural construction would be to consider a sequence of finite volume GPP $(P_{\L_n}^{z,\beta})_{n\ge 1}$ on  bounded windows $\L_n=[-n,n]^d$ and let $n$ tend to infinity. It is more or less what we do in the following Sections \ref{SectionTension} and \ref{SectionAccu}, except that the convergence occurs only for a subsequence and that the field is stationarized (see equation \eqref{EmpirocalField}). As far as we know, there does not exist a general proof of the convergence of the sequence $(P_{\L_n}^{z,\beta})_{n\ge 1}$ without extracted a subsequence. The stationarization is a convenient setting here in order to use the tightness entropy tools. In Sections \ref{SectionFiniteRange} and \ref{SectionDLRequations} we prove that the accumulation points $P^{z,\beta}$ satisfy the DLR equations which is the standard definition of infinite volume GPP (see Definition \ref{DefinitioninfiniteVolumeGPP}). We make precise that the main new assumption in this section is the finite range property (see Definition \ref{FiniteRange}). It means that the points interact with each other only if their distance is smaller than a fixed constant $R>0$. The GNZ equations in the infinite volume regime are discussed in Section \ref{SectionGNZinfinite volume}. The varitional characterisation of GPP, in the spirit of Proposition \ref{PropVP}, is presented in Section \ref{SectionVariationalPrinciple}. Uniqueness and non-uniqueness results of infinite volume GPP are treated in Sections \ref{SectionUniqueness} and \ref{SectionNonUniqueness}. These results, whose proofs are completely self contained here,  ensure the existence of a phase transition for the Area energy function presented in \eqref{EnergyArea}. It means that the associated infinite volume Gibbs measures are unique for some parameters $(z,\beta)$ and non-unique for other parameters.

\subsection{The local convergence setting}\label{SectionTension}

In this section we define the topology of local convergence which is the setting we use to prove the existence of an accumulation point for the sequence of finite volume Gibbs measures. 

First, we say that a function from $\C$ to $\R$ is local if there exists a bounded set $\D\subset\Rd$ such that for all $\g\in\C$, $f(\g)=f(\g_\D)$.

\begin{definition} The local convergence topology on the space of probability measures on $\C$ is the smallest topology such that for any local bounded function $f$ from $\C$ to $\R$ the function $P\mapsto \int f dP$ is continuous. We denote by $\tL$ this topology.
\end{definition}

Let us note that the continuity of functions $f$ in the previous definition is not required. For instance the function $\g\mapsto f(\g)=\1_{N_\D(\g)\ge k}$, where $\D$ is a bounded set in $\Rd$ and $k$ any integer, is a bounded local function. For any vector $u\in\Rd$ we denote by $\tau_u$ the translation by the vector $u$ acting on $\Rd$ or $\C$. A probability $P$ on $\C$ is said stationary (or shift invariant) if for any vector $u\in\Rd$ $P=P\circ \tau_u^{-1}$.

Our tightness tool is based on the specific entropy which is defined for any stationary probability $P$ on $\C$ by

\begin{equation}\label{entropyspecific}
 \I_\z(P)=\lim_{n\to+\infty} \frac{1}{\l^d(\L_n)} \I(P_{\L_n}|\pi_{\L_n}^\z),
 \end{equation}

 where $\I(P_{\L_n}|\pi_{\L_n}^\z)$ is the relative entropy of $P_{\L_n}$, the projection of $P$ on $\L_n$, with respect to $\pi_{\L_n}^\z$ (see Definition \eqref{entropyR}). Note that the specific entropy $\I_\z(P)$ always exists (i.e. the limit in \eqref{entropyspecific} exists); see chapter 15 in \cite{GeorgiiBook}. The tightness tool presented in Lemma \ref{LemmaTightness} below is a consequence of the following proposition. 

\begin{proposition}[Proposition 15.14 \cite{GeorgiiBook}]\label{tensionG} 
For any $\z>0$ and any value $K\ge 0$, the set

$$ \{ P\in\P \text{ such that } \I_\z(P)\le K\} $$ 

is sequentially compact for the topology $\tL$, where $\P$ is the space of stationary probability measures on $\C$ with finite intensity.
\end{proposition}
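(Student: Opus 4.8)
The plan is to take an arbitrary sequence $(P_m)_{m\ge 1}$ in the sublevel set, so that each $P_m\in\P$ is stationary of finite intensity with $\I_\z(P_m)\le K$, and to extract a $\tL$-convergent subsequence whose limit again lies in the sublevel set. The first step is to convert the bound on the \emph{specific} entropy into a uniform bound on the \emph{finite-volume} entropies. Since the reference measure $\pi^\z$ factorizes over disjoint regions, relative entropy is superadditive: partitioning the cube $\L_{kn}=[-kn,kn]^d$ into $k^d$ translates of $\L_n=[-n,n]^d$ and using stationarity gives $\I(P_{m,\L_{kn}}|\pi^\z_{\L_{kn}})\ge k^d\,\I(P_{m,\L_n}|\pi^\z_{\L_n})$. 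Dividing by $\lambda^d(\L_{kn})=k^d\lambda^d(\L_n)$ and letting $k\to\infty$ shows that $\lambda^d(\L_n)^{-1}\I(P_{m,\L_n}|\pi^\z_{\L_n})$ is dominated by its limit $\I_\z(P_m)$ (this superadditivity is also what underlies the existence of the limit \eqref{entropyspecific}). In particular $\I(P_{m,\L_n}|\pi^\z_{\L_n})\le K\,\lambda^d(\L_n)$ for every $m$ and every $n$.

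Second, I would fix $n$ and extract a limit of the window marginals. Writing $f_m^{(n)}=dP_{m,\L_n}/d\pi^\z_{\L_n}$, the uniform bound $\int f_m^{(n)}\log f_m^{(n)}\,d\pi^\z_{\L_n}\le K\lambda^d(\L_n)$ together with the superlinear growth of $x\mapsto x\log x$ (de la Vall\'ee-Poussin criterion) shows that $\{f_m^{(n)}\}_m$ is uniformly integrable in $L^1(\pi^\z_{\L_n})$; by the Dunford-Pettis theorem it is relatively weakly sequentially compact, so a subsequence converges weakly in $L^1$ to a density $f^{(n)}$. Weak $L^1$ convergence is precisely testing against bounded measurable functions, i.e. against all bounded local functions supported in $\L_n$. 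A diagonal extraction over $n$ then produces a single subsequence $(P_{m_j})$ whose marginal on every $\L_n$ converges in this sense. The limiting marginals $P^{(n)}=f^{(n)}\,d\pi^\z_{\L_n}$ are consistent: for bounded $\L_n$-measurable $g$ one has $\int g\,dP^{(n+1)}=\lim_j\int g\,dP_{m_j}=\int g\,dP^{(n)}$. They therefore define a unique probability measure $P$ on $(\C,\FC)$, and since any bounded local function is $\L_n$-measurable for some $n$, we get $P_{m_j}\to P$ in $\tL$.

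Third, I would verify that $P$ inherits the required properties. Stationarity passes to the limit because the translate of a bounded local function is again bounded and local, so $P\circ\tau_u^{-1}$ and $P$ integrate every such function identically. For finite intensity, the entropy inequality $\int g\,dP_{m,\L_n}\le \I(P_{m,\L_n}|\pi^\z_{\L_n})+\log\int e^{g}\,d\pi^\z_{\L_n}$ applied to $g=tN_{\L_n}$ bounds $E_{P_m}(N_{\L_n})$ uniformly in $m$; testing $P_{m_j}\to P$ against the bounded local functions $N_{\L_n}\wedge M$ and letting $M\to\infty$ by monotone convergence yields $E_P(N_{\L_n})<+\infty$, so $P\in\P$ and $P$ is a genuine locally finite point process. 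Finally, to obtain $\I_\z(P)\le K$, I use that $f\mapsto\int f\log f\,d\pi^\z_{\L_n}$ is convex and strongly lower semicontinuous on $L^1$, hence weakly lower semicontinuous, so $\I(P_{\L_n}|\pi^\z_{\L_n})\le\liminf_j\I(P_{m_j,\L_n}|\pi^\z_{\L_n})\le K\lambda^d(\L_n)$; dividing by $\lambda^d(\L_n)$ and invoking the supremum representation from the first step gives $\I_\z(P)\le K$.

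The main obstacle is the compactness input in the second step: upgrading the entropy bound to genuine sequential compactness for the local topology $\tL$, which is strictly finer than the weak topology and not metrizable on all of $\P$. The mechanism is the pair de la Vall\'ee-Poussin / Dunford-Pettis, which exploits exactly the superlinearity of $x\log x$ to turn an entropy bound into weak $L^1$ relative compactness of the densities. The accompanying subtlety is that $N_{\L_n}$ is unbounded, so it must be controlled separately — through the entropy inequality and a truncation argument — to ensure that no mass escapes to configurations with infinitely many points and that the projective limit is a bona fide finite-intensity point process rather than a defective measure.
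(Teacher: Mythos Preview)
The paper does not give its own proof of this proposition: it is stated with a citation to Georgii's book (Proposition~15.14 there) and used as a black box for the tightness Lemma~\ref{LemmaTightness}. Your argument is correct and is essentially the standard one from that reference: the superadditivity of relative entropy over disjoint windows (using that the Poisson reference factorizes and that $P$ is stationary) upgrades the specific-entropy bound to a uniform bound on each $\I(P_{m,\L_n}\mid\pi^\z_{\L_n})$; the de~la~Vall\'ee-Poussin/Dunford--Pettis pair then yields weak $L^1$ compactness of the densities on each window, which is exactly $\tL$-compactness after a diagonal extraction; and lower semicontinuity of relative entropy together with the supremum representation closes the sublevel set. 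Your handling of the two genuine subtleties --- that $\tL$ tests against \emph{all} bounded local functions (not just continuous ones), and that one must prevent mass escaping to configurations with infinitely many points in a bounded window --- is appropriate and matches what is needed. There is nothing further to compare, since the paper offers no alternative argument.
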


\subsection{An accumulation point $P^{z,\beta}$} \label{SectionAccu}

In this section we prove the existence of an accumulation point for a sequence of stationarized finite volume GPP. To the end we consider the Gibbs measures $(P_{\L_n}^{z,\beta})_{n\ge 1}$ on $\L_n:=[-n,n]^d$, where $(P_\L^{z,\beta})$ is defined in \eqref{DefGPP} for any $z>0$, $\beta\ge 0$ and energy function $H$. We assume that $H$ is {\bf stationary}, which means that for any vector $u\in\Rd$  and any finite configuration $\g\in\C_f$ 

$$ H(\tau_u(\g))=H(\g).$$

For any $n\ge 1$, the empirical field $\bar P_{\L_n}^{z,\beta}$ is defined by the probability measure on $\C$ such that for any test function $f$

\begin{equation}\label{EmpirocalField}
\int f(\g)\bar P_{\L_n}^{z,\beta}(d\g)= \frac{1}{\l^d(\L_n)} \int_ {\L_n} \int f(\tau_u(\g)) P_{\L_n}^{z,\beta}(d\g)du.
\end{equation}

The probability measure $\bar P_{\L_n}^{z,\beta}$ can be interpreted as the Gibbs measure $P_{\L_n}^{z,\beta}$ where the origin of the space $\Rd$ (i.e. the point $\{0\}$) is replaced by a random point chosen uniformly inside $\L_n$. It is a kind of stationarization of $P_{\L_n}^{z,\beta}$ and any accumulation point of the sequence $(\bar P_{\L_n}^{z,\beta})_{n\ge 1}$is necessary stationary.

\begin{proposition}\label{existenceAccu}
The sequence $(\bar P_{\L_n}^{z,\beta})_{n\ge 1}$ is tight for the $\tL$ topology. We denote by $P^{z,\beta}$ any of its accumulation points.
\end{proposition}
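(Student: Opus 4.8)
The plan is to deduce tightness from the entropy compactness criterion of Proposition \ref{tensionG}: it suffices to exhibit a constant $K$, independent of $n$, such that the specific entropy $\I_\z(\bar P_{\L_n}^{z,\beta})$ stays below $K$ for a well-chosen reference intensity $\z>0$. Since every accumulation point of $(\bar P_{\L_n}^{z,\beta})$ is stationary, such a uniform bound places the whole sequence inside a fixed sequentially $\tL$-compact set, which yields a convergent subsequence and hence the desired accumulation point $P^{z,\beta}$. The heart of the matter is therefore a uniform control of an entropy per unit volume.

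First I would estimate the finite-volume relative entropy density $\frac{1}{\l^d(\L_n)}\I(P_{\L_n}^{z,\beta}|\pi_{\L_n}^\z)$. Writing the density of $P_{\L_n}^{z,\beta}$ with respect to $\pi_{\L_n}^\z$ and proceeding exactly as in the computation \eqref{calcul1} of the variational principle, one gets
$$\I(P_{\L_n}^{z,\beta}|\pi_{\L_n}^\z)=\log(z/\z)\,E_{P_{\L_n}^{z,\beta}}(N_{\L_n})-\beta E_{P_{\L_n}^{z,\beta}}(H)+(\z-1)\l^d(\L_n)-\log Z_{\L_n}^{z,\beta}.$$
The two a priori unbounded terms are $E_{P_{\L_n}^{z,\beta}}(N_{\L_n})$ and $E_{P_{\L_n}^{z,\beta}}(H)$, and the key trick is to make them cancel. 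By stability (Definition \ref{Energyfuntion}) one has $H\ge A N_{\R^d}$, hence $E_{P_{\L_n}^{z,\beta}}(H)\ge A\,E_{P_{\L_n}^{z,\beta}}(N_{\L_n})$ since the configurations live in $\L_n$; choosing $\z=ze^{-\beta A}$ forces $\log(z/\z)-\beta A=0$, so the entire $E_{P_{\L_n}^{z,\beta}}(N_{\L_n})$ contribution disappears. Combining this with the non-degeneracy lower bound $Z_{\L_n}^{z,\beta}\ge e^{-\l^d(\L_n)}e^{-\beta H(\emptyset)}$ already used to make sense of \eqref{DefGPP} leaves $\I(P_{\L_n}^{z,\beta}|\pi_{\L_n}^\z)\le \z\,\l^d(\L_n)+\beta H(\emptyset)$, and therefore $\frac{1}{\l^d(\L_n)}\I(P_{\L_n}^{z,\beta}|\pi_{\L_n}^\z)\le ze^{-\beta A}+\beta|H(\emptyset)|=:K$ uniformly in $n$.

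The main obstacle is the passage from this finite-volume bound to a bound on the specific entropy $\I_\z(\bar P_{\L_n}^{z,\beta})$ of the stationarized field, since $\bar P_{\L_n}^{z,\beta}$ is only asymptotically stationary and its specific entropy is defined through the limit \eqref{entropyspecific} over boxes much larger than $\L_n$. I would handle this through the tightness tool announced as Lemma \ref{LemmaTightness}, whose content is precisely that averaging over translations does not increase the entropy density: tiling a large box $\L_m$ by translates of $\L_n$, using the product structure of $\pi^\z$ over disjoint regions together with the convexity of relative entropy under the translation average defining \eqref{EmpirocalField}, one obtains $\I_\z(\bar P_{\L_n}^{z,\beta})\le \frac{1}{\l^d(\L_n)}\I(P_{\L_n}^{z,\beta}|\pi_{\L_n}^\z)$ up to boundary corrections that vanish as the box grows. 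This is the delicate step, because it mixes the subadditivity of relative entropy over a tiling with the control of the $\L_n$-boundary contributions. Once this inequality is in hand, the uniform bound $\I_\z(\bar P_{\L_n}^{z,\beta})\le K$ follows, and Proposition \ref{tensionG} delivers sequential $\tL$-compactness of $(\bar P_{\L_n}^{z,\beta})_{n\ge1}$, completing the proof.
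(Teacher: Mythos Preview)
Your proposal is correct and follows the same route as the paper: compute $\I(P_{\L_n}^{z,\beta}|\pi_{\L_n}^\z)$ explicitly, kill the $E_{P_{\L_n}^{z,\beta}}(N_{\L_n})$ term via stability by choosing $\z=ze^{-\beta A}$ (the paper only asks $\log(z/\z)\le\beta A$), bound $-\log Z_{\L_n}^{z,\beta}$ through non-degeneracy, and then invoke Lemma~\ref{LemmaTightness}. The only difference is cosmetic: the paper states Lemma~\ref{LemmaTightness} directly as ``uniform bound on $\frac{1}{\l^d(\L_n)}\I(P_{\L_n}^{z,\beta}|\pi_{\L_n}^\z)$ implies tightness of $(\bar P_{\L_n}^{z,\beta})$'' and defers its proof to Georgii, whereas you additionally sketch why that lemma holds (convexity of entropy under the translation average plus tiling).
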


\begin{proof}

Our tightness tool is the following lemma whose the proof is a consequence of Proposition \ref{tensionG} (See also Proposition 15.52 in \cite{GeorgiiBook}).

\begin{lemma}\label{LemmaTightness} The sequence $(\bar P_{\L_n}^{z,\beta})_{n\ge 1}$ is tight for the $\tL$ topology if there exits $\z>0$ such that
\begin{equation}\label{UnifBound}
 \sup_{n\ge 1} \frac{1}{\l^d(\L_n)} \I(P_{\L_n}^{z,\beta}|\pi_{\L_n}^{\z}) <+\infty.
 \end{equation}
\end{lemma}

So, let us compute $\I(P_{\L_n}^{z,\beta}|\pi_{\L_n}^{\z})$ and check that we can find $\z>0$ such that \eqref{UnifBound} holds.

\begin{eqnarray*}
\I(P_{\L_n}^{z,\beta}|\pi_{\L_n}^{\z}) & =& \int \log\left(\frac{dP_{\L_n}^{z,\beta}}{d\pi_{\L_n}^{\z}}\right) dP_{\L_n}^{z,\beta}\\
& =& \int\left[ \log\left(\frac{dP_{\L_n}^{z,\beta}}{d\pi_{\L_n}}\right)+\log\left(\frac{d\pi_{\L_n}}{d\pi_{\L_n}^{\z}}\right)\right] dP_{\L_n}^{z,\beta}\\
& =& \int \left[ \log\left(z^{N_{\L_n}}\frac{e^{-\beta H}}{Z_{\L_n}^{z,\beta}}\right) + \log\left(e^{(\z-1)\l^d(\L_n)} \left(\frac{1}{\z}\right)^{N_{\L_n}}\right) \right]  dP_{\L_n}^{z,\beta} \\
& = & \int \left[-\beta H+\log\left(\frac{z}{\z}\right)N_{\L_n} \right]dP_{\L_n}^{z,\beta}+(\z-1)\l^d(\L_n)-\log(Z_{\L_n}^{z,\beta}).
\end{eqnarray*}

Thanks to the non degeneracy and the stability of $H$ we find that

\begin{eqnarray*}
\I(P_{\L_n}^{z,\beta}|\pi_{\L_n}^{\z}) & \le &  \int \left(-A\beta+\log\left(\frac{z}{\z}\right)\right)N_{\L_n} dP_{\L_n}^{z,\beta}+\l^d(\L_n)\Big((\z-1)+1+\beta H(\{\emptyset\})\Big).
\end{eqnarray*}
Choosing $\z>0$ such that $-A\beta+\log(z/\z)\le 0$ we obtain
 \begin{eqnarray*}
\I(P_{\L_n}^{z,\beta}|\pi_{\L_n}^{\z}) & \le & \l^d(\L_n)(\z+\beta H(\{\emptyset\})
\end{eqnarray*}
and \eqref{UnifBound} holds. Proposition \ref{existenceAccu} is proved.
\end{proof}

In the following, for sake of simplicity, we  say that $\bar P_{\L_n}^{z,\beta}$ converges to $P^{z,\beta}$ although it occurs only for a subsequence. 

Note that the existence of an accumulation points holds under very weak assumptions on the energy function $H$. Indeed the two major assumptions are the stability and the stationarity. The superstability or the lower regularity presented in Definition \ref{superstabilty} are not required here. However, if the energy function $H$ is superstable and lower regular, then  the accumularion points $P^{z,\beta}$ inherits Ruelle estimates \eqref{RuelleEstimates}. This fact is obvious since the function $\g\mapsto \1_{\{N_\D(\g)\ge k\}}$ is locally bounded.

\begin{corollary}
Let $H$ be a superstable and lower regular energy function (see Definition \ref{superstabilty}). Let $z>0$ and $\beta>0$ be fixed. Then for any bounded subset $\Delta\subset \R^d$ with $\lambda^d(\Delta)>0$, there exists $c_1$ and $c_2$ two positive constants such that for any $k\ge 0$

\begin{equation}\label{RuelleEstimatesinfinitevolume}
P^{z,\beta} (N_\D\ge k)\le c_1e^{-c_2k^2}.
\end{equation}
\end{corollary}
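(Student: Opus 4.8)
The plan is to transfer the finite-volume Ruelle estimate \eqref{RuelleEstimates} to the accumulation point $P^{z,\beta}$ by passing to the limit along the converging subsequence $(\bar P_{\L_n}^{z,\beta})_n$ furnished by Proposition \ref{existenceAccu}, exploiting that $\g\mapsto \1_{\{N_\D(\g)\ge k\}}$ is a \emph{bounded local} function (it depends only on $\g_\D$ and takes values in $\{0,1\}$). Since $\bar P_{\L_n}^{z,\beta}\to P^{z,\beta}$ for the topology $\tL$, the very definition of $\tL$ gives $\bar P_{\L_n}^{z,\beta}(N_\D\ge k)\to P^{z,\beta}(N_\D\ge k)$ for each fixed $k$. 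Hence it suffices to produce constants $c_1,c_2>0$, independent of $n$, with $\bar P_{\L_n}^{z,\beta}(N_\D\ge k)\le c_1 e^{-c_2 k^2}$ for all $n,k$; the bound then survives in the limit and yields \eqref{RuelleEstimatesinfinitevolume}.

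To obtain this uniform bound I would first unfold the definition \eqref{EmpirocalField} of the empirical field with $f=\1_{\{N_\D\ge k\}}$, which gives
\begin{equation*}
\bar P_{\L_n}^{z,\beta}(N_\D\ge k)=\frac{1}{\l^d(\L_n)}\int_{\L_n} P_{\L_n}^{z,\beta}\big(\tau_u^{-1}\{N_\D\ge k\}\big)\,du.
\end{equation*}
The key observation is the translation covariance of the finite-volume measures: because $H$ is stationary, one checks directly from \eqref{DefGPP} (using that $\pi_\L$ pushes forward by $\tau_u$ to $\pi_{\L+u}$, that $N_\L(\g)=N_{\L+u}(\tau_u\g)$, and that $Z_\L^{z,\beta}=Z_{\L+u}^{z,\beta}$) that $P_\L^{z,\beta}\circ\tau_u^{-1}=P_{\L+u}^{z,\beta}$. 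Consequently each integrand equals $P_{\L_n+u}^{z,\beta}(N_\D\ge k)$: the \emph{same} event $\{N_\D\ge k\}$ for the fixed shape $\D$, but evaluated under the translated bounded window $\L_n+u$.

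Now I would invoke the finite-volume estimate \eqref{RuelleEstimates} with the fixed set $\D$: it furnishes constants $c_1,c_2$ depending only on $\D$ (and on $z,\beta$) that are \emph{uniform over all bounded windows} $\Lambda$, in particular over $\Lambda=\L_n+u$. Therefore $P_{\L_n+u}^{z,\beta}(N_\D\ge k)\le c_1 e^{-c_2 k^2}$ for every $u$ and $n$, and averaging over $u\in\L_n$ preserves this bound, so that $\bar P_{\L_n}^{z,\beta}(N_\D\ge k)\le c_1 e^{-c_2 k^2}$. Passing to the limit along the subsequence then gives $P^{z,\beta}(N_\D\ge k)\le c_1 e^{-c_2 k^2}$, as claimed.

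The only genuinely delicate point is the bookkeeping of the stationarization: one must move the translation $\tau_u$ off the (non-stationary) measure and onto the window via the covariance $P_\L^{z,\beta}\circ\tau_u^{-1}=P_{\L+u}^{z,\beta}$, so that the Ruelle constants may be chosen for the single fixed shape $\D$ and are thus uniform in $u$. Everything else — the bounded-local continuity built into $\tL$, and the stability of the exponential bound under averaging over $u$ and under taking limits of numbers — is routine, which is precisely why the inheritance of \eqref{RuelleEstimates} can be called ``obvious''.
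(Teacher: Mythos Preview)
Your proposal is correct and follows exactly the approach the paper indicates: the paper's ``proof'' is the single sentence preceding the corollary, namely that the inheritance of \eqref{RuelleEstimates} is obvious because $\g\mapsto \1_{\{N_\D(\g)\ge k\}}$ is bounded and local. You have carefully unpacked that sentence, and in particular your handling of the stationarization via the covariance $P_\L^{z,\beta}\circ\tau_u^{-1}=P_{\L+u}^{z,\beta}$ is the right way to ensure the Ruelle constants are applied to the \emph{fixed} set $\D$ (so that their uniformity in the window $\Lambda$ from \eqref{RuelleEstimates} can be invoked).
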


The important point now is to prove that $P^{z,\beta}$ satisfies good stochastic properties as for instance the DLR or GNZ equations. At this stage, without extra assumptions, these equations are not necessarily satisfied. Indeed it is possible to build energy functions $H$ such that the accumulation  point $P^{z,\beta}$ is degenerated and charges only the empty configuration. In this mini-course our extra assumption is the finite range property presented in the following section. More general settings have been investigated for instance in \cite{dereudredrouilhetgeorgii} or  \cite{Ruellebook}.

\subsection{The finite range property} \label{SectionFiniteRange}

The finite range property expresses that further a certain distance distance $R>0$ the points do not interact each other. Let us recall the Minkoswki $\oplus$ operator acting on sets in $\R^d$. For any two sets $A,B\subset \R^d$, the set $A\oplus B$ is defined by  $\{x+y, x\in A \text{ and } y\in B\}$.

\begin{definition}\label{FiniteRange}

The energy function $H$ has a finite range $R>0$ if for every bounded $\D$, the local energy $H_\D$ (see Definition \ref{EnergyDelta}) is a local function on $\D\oplus B(0,R)$. It means that for any finite configuration $\g\in\C_f$ 

$$H_\D(\g) := H(\g)-H(\g_{\D^c})=H(\g_{\D\oplus B(0,R)})-H(\g_{\D\oplus B(0,R)\backslash\D^c}).$$

\end{definition}

 Let us illustrate the finite range property in the setting of pairwise interaction defined in \eqref{energypairwise}. Assume that the interaction potential $\varphi :\R^+ \to \R\cup\{+\infty\}$ has a support included in $[0,R]$. Then the associated energy function has a finite $R$;

\begin{eqnarray*}
H_\D(\g)
& =& \sum_{\begin{array}{l}
\{x,y\} \subset \g\\
\{x,y\}\cap \D\neq \emptyset\\
|x-y|\le R
\end{array}
} \varphi(|x-y|)\\
& =& \sum_{\begin{array}{l}
\{x,y\} \subset \g_{\D\oplus B(0,R)}\\
\{x,y\}\cap \D\neq \emptyset\\
\end{array}
} \varphi(|x-y|).
\end{eqnarray*}

Also the area energy function \eqref{EnergyArea} inherits the finite range property. A simple computation gives

\begin{equation}
H_\D(\g) = \text{Area}\left(\bigcup_{x\in\g_{\D}} B(x,R) \backslash \bigcup_{x\in\g_{\D\oplus B(0,2R) \backslash \D}} B(x,R)\right)
 \end{equation}
 which provides a range of interaction equals to $2R$.

Let us note that the energy functions defined in \eqref{energyvoronoi},\eqref{energyDelaunay} and \eqref{EnergyCRCM}  do not have the finite range property. Similarly the pairwise energy function \eqref{energypairwise} with the Lennard-Jones potential is not finite range since the support of the pair potential is not bounded. A truncated version of such potential is sometimes considered.

Let us finish this section by noting that the finite range property allows  to extend the domain of definition of  $H_\D$ from the space $\C_f$ to the set $\C$. Indeed, since $H_\D(\g)=H_\D(\g_{\D\oplus B(0,R)})$, this equality provides a definition of  $H_\D(\g)$ when $\g$ is in $\C$. This point is crucial in order to correctly define the DLR equations in the infinite volume regime.

\subsection{DLR equations}\label{SectionDLRequations}

In section 1 on the finite volume GPP, the DLR equations are presented as properties for $P_\L^{z,\beta}$ (see Section \ref{SectionDLRFiniteVolume}). In the setting of infinite volume GPP, the DLR equations are the main points of  the definition of GPP. 

\begin{definition}[infinite volume GPP] \label{DefinitioninfiniteVolumeGPP} Let $H$ be a stationary and finite range energy function. A stationary probability $P$ on $\C$ is an infinite volume Gibbs measure with activity $z>0$, inverse temperature $\beta\ge 0$ and energy function $H$ if for any bounded $\Delta\subset \Rd$ such that $\l^d(\D)>0$ then for $P$-a.s. all $\g_{\D^c}$
\begin{equation}\label{DLRinfiniteVolume}
P(d\g_\D|\g_{\D^c}) = \frac{1}{Z_{\D}^{z,\beta}(\g_{\D^c})} z^{N_\D(\g)}e^{-\beta H_\D(\g)} \pi_\D(d\g_\D),
\end{equation}
where $Z_{\D}^{z,\beta}(\g_{\D^c})$ is the normalizing constant $\int z^{N_\D(\g)}e^{-\beta H_\D(\g)} \pi_\D(d\g_\D)$. As usual, an infinite volume GPP is a point process whose distribution is an infinite volume Gibbs measure.
 \end{definition}

Note that the DLR equations \eqref{DLRinfiniteVolume} make sense since $H_\D(\g)$ is well defined for any configuration $\g\in\C$ (see the end of Section \ref{SectionFiniteRange}). Note also that the DLR equations \eqref{DLRinfiniteVolume} can be reformulated in an integral form. Indeed $P$ satisfies \eqref{DLRinfiniteVolume} if and only if for any local bounded function $f$ from $\C$ to $\R$

\begin{equation}\label{DLRintegral}
\int fdP=\int f(\g'_{\D}\cup \g_{\D^c}) \frac{1}{Z_{\D}^{z,\beta}(\g_{\D^c})} z^{N_\D(\g'_\D)}e^{-\beta H_\D(\g'_{\D}\cup \g_{\D^c})} \pi_\D(d\g'_\D)   P(d\g).
\end{equation}

The term "equation" is now highlighted by the formulation \eqref{DLRintegral} since the unknown variate $P$ appears in both left and right sides. The existence, uniqueness and non-uniqueness of solutions  of such DLR equations are non trivial questions. In the next theorem, we show that the accumulation point $P^{z,\beta}$ obtained in Section \ref{SectionAccu} is such a solution. Infinite volume Gibbs measure exist and the question of existence is solved. The uniqueness and non-uniqueness are discussed in Sections \ref{SectionUniqueness} and \ref{SectionNonUniqueness}.

\begin{theorem} \label{THexistence}
Let $H$ be a stationary and finite range energy function. Then for any $z>0$ and $\beta\ge 0$ the probability measure $P^{z,\beta}$ defined in Proposition \ref{existenceAccu} is an infinite volume Gibbs measure. 
\end{theorem}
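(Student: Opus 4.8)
The plan is to show that the accumulation point $P^{z,\beta}$ satisfies the DLR equations \eqref{DLRinfiniteVolume}, or equivalently their integral form \eqref{DLRintegral}, by passing to the limit in the finite volume DLR equations established in Proposition \ref{PropositionDLR}. First I would fix a bounded set $\D$ with $\l^d(\D)>0$ and a local bounded test function $f$ depending only on $\g_{\D'}$ for some bounded $\D'$, and introduce the operator
\begin{equation*}
(G_\D f)(\g) = \int f(\g'_\D \cup \g_{\D^c}) \frac{z^{N_\D(\g'_\D)} e^{-\beta H_\D(\g'_\D \cup \g_{\D^c})}}{Z_\D^{z,\beta}(\g_{\D^c})} \pi_\D(d\g'_\D),
\end{equation*}
so that the claim becomes $\int f\, dP^{z,\beta} = \int G_\D f\, dP^{z,\beta}$. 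The crucial structural fact, furnished by the finite range assumption (Definition \ref{FiniteRange}), is that $H_\D(\g)$ depends only on $\g_{\D\oplus B(0,R)}$, so $G_\D f$ is itself a \emph{local} function of $\g$, measurable with respect to the bounded window $(\D'\cup\D)\oplus B(0,R)$.

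The main difficulty is that $G_\D f$ need not be a \emph{bounded continuous} function, and more seriously that the identity $\int f\,d\bar P_{\L_n}^{z,\beta} = \int G_\D f\, d\bar P_{\L_n}^{z,\beta}$ does not hold exactly at finite volume: the empirical field $\bar P_{\L_n}^{z,\beta}$ is a spatial average of translates of $P_{\L_n}^{z,\beta}$ over $\L_n$, and for a translate $\tau_u$ the set $\tau_u(\D)$ may fail to sit inside $\L_n$ or may lie too close to the boundary $\partial\L_n$ for the finite volume DLR equation to apply cleanly. My strategy would be to split the averaging integral $\frac{1}{\l^d(\L_n)}\int_{\L_n}\cdots du$ over the vectors $u$ into a "good" region, those $u$ for which $\tau_u(\D\oplus B(0,R)) \subset \L_n$ and the finite volume DLR equation of Proposition \ref{PropositionDLR} applies verbatim to give an exact cancellation, and a "bad" boundary region of width of order $\mathrm{diam}(\D)+R$ near $\partial\L_n$. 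The bad region has volume $O(n^{d-1})$, which is negligible compared with $\l^d(\L_n)=O(n^d)$; since $f$ and $G_\D f$ are uniformly bounded (the latter because the DLR kernel is a probability kernel, so $\|G_\D f\|_\infty \le \|f\|_\infty$), the boundary contribution vanishes as $n\to\infty$. This yields $\int f\,d\bar P_{\L_n}^{z,\beta} = \int G_\D f\, d\bar P_{\L_n}^{z,\beta} + o(1)$.

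It then remains to pass to the limit along the convergent subsequence. Since $f$ is local and bounded, $\int f\,d\bar P_{\L_n}^{z,\beta} \to \int f\,dP^{z,\beta}$ directly by the definition of the topology $\tau_\mathcal{L}$. The term $\int G_\D f\, d\bar P_{\L_n}^{z,\beta}$ requires more care, because $G_\D f$, although local, involves the partition function $Z_\D^{z,\beta}(\g_{\D^c})$ in the denominator and the energy $H_\D$ in the exponent, neither of which is a priori a bounded function of the boundary condition $\g_{\D^c}$; this is where I expect the real technical obstacle to lie. The natural route is to verify that $G_\D f$ is a bounded local function: boundedness follows from the probability-kernel property noted above, and locality from finite range, so that convergence $\int G_\D f\, d\bar P_{\L_n}^{z,\beta} \to \int G_\D f\, dP^{z,\beta}$ again holds by definition of $\tau_\mathcal{L}$. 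Combining the three displays gives $\int f\,dP^{z,\beta} = \int G_\D f\, dP^{z,\beta}$ for every bounded local $f$, which is precisely the integral DLR equation \eqref{DLRintegral}; since $\D$ and $f$ were arbitrary, $P^{z,\beta}$ is an infinite volume Gibbs measure. The stationarity of $P^{z,\beta}$, needed for Definition \ref{DefinitioninfiniteVolumeGPP}, was already secured in Proposition \ref{existenceAccu} by construction of the stationarized empirical fields.
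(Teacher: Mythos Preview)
Your proposal is correct and follows essentially the same route as the paper: define the DLR kernel $G_\D f$ (the paper calls it $f_\D$), use finite range plus the probability-kernel bound $\|G_\D f\|_\infty\le\|f\|_\infty$ to see it is local and bounded, apply the finite volume DLR equations on the translated window for those $u$ in a bulk set $\L_n^*$ whose complement has negligible relative volume, and pass to the limit along the $\tL$-convergent subsequence. The only cosmetic slip is a sign in the translation: the finite volume DLR is applied on $\tau_{-u}(\D)$ (you need $\tau_{-u}(\D)\subset\L_n$, not $\tau_u(\D\oplus B(0,R))\subset\L_n$), but this does not affect the argument.
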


\begin{proof}
We have just to check that $P^{z,\beta}$ satisfies, for any bounded $\Delta$  and any positive local bounded function $f$, the equation \eqref{DLRintegral}. Let us define the function $f_\Delta$ by
$$ f_\Delta: \g \mapsto \int f(\g'_{\D}\cup \g_{\D^c}) \frac{1}{Z_{\D}^{z,\beta}(\g_{\D^c})} z^{N_\D(\g'_\D)}e^{-\beta H_\D(\g'_{\D}\cup \g_{\D^c})} \pi_\D(d\g'_\D).$$
Since $f$ is local and bounded and since $H$ is finite range, the function $f_\D$ is bounded and local as well. From the convergence of the sequence  $(\bar P_{\L_n}^{z,\beta})_{n\ge 1}$ to $P^{z,\beta}$ with respect to the $\tL$ topology, we have
\begin{eqnarray}\label{calcul1}
\int f_\D dP^{z,\beta} & = & \lim_{n\to\infty} \int f_\D d\bar P_{\L_n}^{z,\beta}\nonumber\\
&=& \lim_{n\to\infty} \frac{1}{\l^d(\L_n)} \int_{\L_n} \int f_\D(\tau_u(\g)) P_{\L_n}^{z,\beta}(d\g)du.\nonumber\\
& = & \lim_{n\to\infty} \frac{1}{\l^d(\L_n)} \int_{\L_n} \int \int f(\g'_{\D}\cup \tau_u(\g)_{\D^c}) \frac{z^{N_\D(\g'_\D)}}{Z_{\D}^{z,\beta}(\tau_u(\g)_{\D^c})} e^{-\beta H_\D(\g'_{\D}\cup \tau_u(\g)_{\D^c})}\nonumber\\
& &\qquad \qquad\qquad\qquad \pi_\D(d\g'_\D) P_{\L_n}^{z,\beta}(d\g)du\nonumber\\
& =& \lim_{n\to\infty} \frac{1}{\l^d(\L_n)} \int_{\L_n} \int \int f\Big(\tau_u\big(\g'_{\tau_{-u}(\Delta)}\cup \g_{\tau_{-u}(\Delta)^c}\big)\Big)\frac{z^{N_{\tau_{-u}(\D)}(\g'_{\tau_{-u}(\Delta)})}}{Z^{z,\beta}_{\tau_{-u}(\Delta)}(\g_{\tau_{-u}(\Delta)^c})}\nonumber\\
& &  e^{-\beta H_{\tau_{-u}(\D)}\big(\g'_{\tau_{-u}(\Delta)}\cup \g_{\tau_{-u}(\Delta)^c}\big)} \pi_{\tau_{-u}(\Delta)}(d\g'_{\tau_{-u}(\Delta)}) P_{\L_n}^{z,\beta}(d\g)du.
\end{eqnarray}

Denoting by $\L_n^*$ the set of $u\in\L_n$ such that $\tau_{-u}(\Delta)\subset \L_n$, by Proposition \ref{PropositionDLR}, $P_{\L_n}^{z,\beta}$ satisfies the DLR equation on $\tau_{-u}(\Delta)$ as soon as $\tau_{-u}(\Delta)\subset \L_n$ (i.e. $u\in\L_n^*$). It follows that for any $u\in\L_n^*$

\begin{eqnarray}\label{Calcul2}
& & \int f(\tau_u\g) P_{\L_n}^{z,\beta}(d\g)\nonumber\\
&= & \int \int f\Big(\tau_u\big(\g'_{\tau_{-u}(\Delta)}\cup \g_{\tau_{-u}(\Delta)^c}\big)\Big)\frac{z^{N_{\tau_{-u}(\D)}(\g'_{\tau_{-u}(\Delta)})}}{Z^{z,\beta}_{\tau_{-u}(\Delta)}(\g_{\tau_{-u}(\Delta)^c})}  e^{-\beta H_{\tau_{-u}(\D)}\big(\g'_{\tau_{-u}(\Delta)}\cup \g_{\tau_{-u}(\Delta)^c}\big)}\nonumber\\
& & \qquad \qquad \qquad  \pi_{\tau_{-u}(\Delta)}(d\g'_{\tau_{-u}(\Delta)}) P_{\L_n}^{z,\beta}(d\g).
\end{eqnarray}

By noting that $\l^d(\L_n^*)$ is equivalent to $\l^d(\L_n)$ when $n$ goes to infinity, we obtain in compiling \eqref{calcul1} and \eqref{Calcul2}

\begin{eqnarray*}
\int f_\D dP^{z,\beta} & = & \lim_{n\to\infty} \frac{1}{\l^d(\L_n)} \int_{\L^*_n} \int \int f(\tau_u\g) P_{\L_n}^{z,\beta}(d\g)du\\
& = & \lim_{n\to\infty} \int f(\g) \bar P_{\L_n}^{z,\beta}(d\g)\\
& =& \int fdP^{z,\beta}
\end{eqnarray*}
which gives the expected integral DLR equation on $\D$ with test function $f$.

\end{proof}

\subsection{GNZ equations} \label{SectionGNZinfinite volume}
 
In this section we deal with the GNZ equations in the infinite volume regime. As in the finite volume case, the main advantage of such equations is that the intractable normalization factor $Z_\L^{z,\beta}$ is not present.

Note first that, in the setting of finite range interaction $R>0$, the local energy $h(x,\g)$ defined in Definition \ref{localenergy} is well-defined for any configuration $\g\in\C$ even if $\g$ is infinite. Indeed, we clearly have $ h(x,\g)=h(x,\g_{B(x,R)})$.

\begin{theorem}\label{PropositionGNZinfiniteVolume}
Let $P$ be a probability measure on $\C$. Let $H$ be a finite range energy function and $z>0$, $\beta\ge 0$ be two parameters. Then $P$ is an infinite volume Gibbs measure with energy function $H$, activity $z>0$ and inverse temperature $\beta$ if and only if for any positive measurable function $f$ from $\Rd\times\C$ to $\R$
\begin{equation}\label{GNZequtaions}
\int \sum_{x\in\g} f(x,\g\backslash \{x\}) P(d\g) =z \int \int_{\Rd} f(x,\g) e^{-\beta h(x,\g)} dxP(d\g).
\end{equation}
\end{theorem}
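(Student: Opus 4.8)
The plan is to prove both implications of this equivalence. The forward direction (Gibbs measure $\Rightarrow$ GNZ equations) should proceed by disintegrating $P$ over the configuration outside a bounded window $\D$ using the DLR equations \eqref{DLRinfiniteVolume}, and then applying the finite-volume GNZ computation already established in Proposition \ref{PropGNZfiniteVolume}. Concretely, I would first reduce to test functions $f(x,\g)$ for which $x$ ranges over a bounded set $\D$, by writing $\Rd$ as a countable disjoint union of bounded cells and using monotone convergence; since both sides of \eqref{GNZequtaions} are positive and linear in $f$, it suffices to verify the identity for $f$ supported in $x\in\D$. For such $f$, I would condition on $\g_{\D^c}$: the left side becomes $\int \big(\int \sum_{x\in\g_\D} f(x,(\g'_\D\cup\g_{\D^c})\backslash\{x\}) P(d\g'_\D|\g_{\D^c})\big) P(d\g)$, and inside the conditional expectation $P(d\g'_\D|\g_{\D^c})$ is exactly the finite-volume Gibbs measure on $\D$ with modified energy given by \eqref{DLRinfiniteVolume}. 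Here the key technical point is that the finite-range property makes $h(x,\cdot)$ and $H_\D$ depend only on a bounded neighbourhood, so the local energy of a point $x\in\D$ relative to the conditional measure coincides with $h(x,\g)$ computed in the full configuration.

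For the conditional step I would apply the finite-volume GNZ equation \eqref{GNZFV} to the probability measure $P(\cdot|\g_{\D^c})$ on $\C_\D$. This requires checking that \eqref{GNZFV} holds not just for the homogeneous energy $H$ but for the conditional energy functional $\g'_\D\mapsto H_\D(\g'_\D\cup\g_{\D^c})$; this is essentially Proposition \ref{PropGNZfiniteVolume} applied with $\D$ in place of $\L$ and with the boundary condition $\g_{\D^c}$ frozen, so the local energy of $x$ relative to this conditional model is $H_\D(\{x\}\cup\g'_\D\cup\g_{\D^c})-H_\D(\g'_\D\cup\g_{\D^c})=h(x,\g'_\D\cup\g_{\D^c})$ by the telescoping definition of $H_\D$. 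Integrating the resulting identity back over $P(d\g_{\D^c})$ and summing over the cells covering $\Rd$ yields \eqref{GNZequtaions}.

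For the converse direction (GNZ equations $\Rightarrow$ Gibbs measure) I would fix a bounded $\D$ and show that the GNZ equations force the conditional law $P(d\g_\D|\g_{\D^c})$ to be the prescribed Gibbsian form. The natural strategy is to mirror the proof of Proposition \ref{PropGNZreverse}: the GNZ identity \eqref{GNZequtaions} is a characterizing integration-by-parts relation, and restricting attention to test functions $f(x,\g)$ supported on $x\in\D$ and depending on $\g$ only through $\g_\D$ (with $\g_{\D^c}$ entering as a parameter) shows that the conditional measure satisfies the finite-volume GNZ equations on $\C_\D$ with boundary condition $\g_{\D^c}$. By Proposition \ref{PropGNZreverse}, this conditional measure must equal $P_\D^{z,\beta}(\cdot|\g_{\D^c})$, which is precisely the DLR specification \eqref{DLRinfiniteVolume}. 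The main obstacle I anticipate is the measure-theoretic bookkeeping in the converse: extracting the conditional GNZ equation from the global one requires care with the regular conditional probability $P(d\g_\D|\g_{\D^c})$ and justifying that the characterization of Proposition \ref{PropGNZreverse} can be applied $P$-almost surely in $\g_{\D^c}$, simultaneously for a rich enough class of test functions. The finite-range assumption is what keeps all the relevant energies local and hence measurable in the right variables, so I would lean on it heavily at exactly this point.
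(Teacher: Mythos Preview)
Your proposal is correct and follows essentially the same route as the paper's proof: for the forward direction, reduce to local $f$ by monotone approximation, apply the DLR equation \eqref{DLRinfiniteVolume} on the support $\D$, and invoke the finite-volume GNZ computation of Proposition \ref{PropGNZfiniteVolume}; for the converse, extract the conditional GNZ equation on $\D$ and apply Proposition \ref{PropGNZreverse}. The only point the paper makes more explicit than you do is the disintegration step in the converse: it tests \eqref{GNZequtaions} against $\tilde f(x,\g)=\psi(\g_{\D^c})f(x,\g)$ for arbitrary positive $\psi$, which is exactly the device needed to pass from the integrated identity to the $P$-a.s.\ conditional one you correctly anticipate as the main obstacle.
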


\begin{proof}
Let us start with the proof of the "only if" part. Let $P$ be an infinite volume Gibbs measure. By standard  monotonicity arguments it is sufficient to prove \eqref{GNZequtaions} for any local positive measurable function $f$. So let $\Delta\subset \Rd$ be a bounded set such that $f(x,\g)=1_\D(x)f(x,\g_\D)$. Applying now the DLR equation \eqref{DLRintegral} on the set $\Delta$ we find 

\begin{eqnarray*}
& & \int \sum_{x\in\g} f(x,\g\backslash \{x\}) P(d\g)\\
 & = & \int\int \sum_{x\in\g'_\D} f(x,\g'_\D\backslash \{x\}) \frac{1}{Z_{\D}^{z,\beta}(\g_{\D^c})} z^{N_{\D}(\g'_\D)}e^{-\beta H_\D(\g'_{\D}\cup \g_{\D^c})} \pi_\D(d\g'_\D)   P(d\g). \\
\end{eqnarray*}
 
 By computations similar to those developed in the proof of Proposition \ref{PropGNZfiniteVolume}, we obtain

\begin{eqnarray*}
\int \sum_{x\in\g} f(x,\g\backslash \{x\}) P(d\g) & = & z \int \int_\D \int f(x,\g'_\D) \frac{1}{Z_{\D}^{z,\beta}(\g_{\D^c})} e^{-\beta h(x,\g'_{\D}\cup \g_{\D^c})}\\
& & \qquad  z^{N_{\D}(\g'_\D)}e^{-\beta H_\D(\g'_{\D}\cup \g_{\D^c})} \pi_\D(d\g'_\D) dx  P(d\g) \\
& = & z \int \int_{\Rd} f(x,\g) e^{-\beta h(x,\g)} dxP(d\g).
\end{eqnarray*}

Let us now turn to the "if part". Applying equation \eqref{GNZequtaions} to the function $\tilde f(x,\g)=\psi(\g_{\D^c})f(x,\g)$ where $f$ is a local positive function with support $\D$ and $\psi$ a positive test function we find

$$ \int \psi(\g_{\D^c}) \sum_{x\in\g_\Delta} f(x,\g\backslash \{x\}) P(d\g) =z \int \psi(\g_{\D^c})\int_{\Rd} f(x,\g) e^{-\beta h(x,\g)} dxP(d\g).$$

This implies that for $P$ almost all $\g_{\D^c}$ the conditional probability measure $P(d\g_\D|\g_{\D^c})$ solves the GNZ equations on $\D$ with local energy function $\g_\D\mapsto h(x,\g_\D\cup\g_{\D^c})$. Following an adaptation of the proof of Proposition \ref{PropGNZreverse}, we get that 

$$P(d\g_\D|\g_{\D^c}) = \frac{1}{Z_{\D}^{z,\beta}(\g_{\D^c})} z^{N_\D(\g)}e^{-\beta H_\D(\g)} \pi_\D(d\g_\D),$$

which is exactly the DLR equation \eqref{DLRinfiniteVolume} on $\Delta$. The theorem is proved.

\end{proof}
%

Let us finish this section with an application of the GNZ equations which highlights that some properties of infinite volume GPP can be extracted from  the implicit GNZ equations.

\begin{proposition}  Let $\G$ be a infinite volume GPP for the hardcore pairwise interaction $\varphi(r)=+\infty\1_{[0,R]}(r)$ (see Definition \eqref{energypairwise}) and the activity $z>0$.  Then 

\begin{equation}\label{borneHardcore}
\frac{z}{1+z v_d R^d} \le E\left(N_{[0,1]^d}(\G)\right) \le z,
\end{equation}
where $v_d$ is the volume of the unit ball in $\Rd$.
\end{proposition}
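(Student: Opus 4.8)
The plan is to apply the GNZ equations \eqref{GNZequtaions} with a carefully chosen test function and exploit the specific structure of the hardcore interaction. First I would observe that for the hardcore pair potential, the local energy $h(x,\g)$ takes only two values: it equals $0$ when the ball $B(x,R)$ contains no point of $\g$ (i.e. $\g$ has no point within distance $R$ of $x$), and $+\infty$ otherwise. Consequently $e^{-\beta h(x,\g)} = \1_{\{\g \cap B(x,R) = \emptyset\}}$, and this indicator does not actually depend on $\beta$, which is why the bounds in \eqref{borneHardcore} are stated in terms of $z$ alone.

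For the upper bound, I would apply \eqref{GNZequtaions} with the test function $f(x,\g) = \1_{[0,1]^d}(x)$, which is independent of the configuration. The left-hand side becomes $E(N_{[0,1]^d}(\G))$. On the right-hand side we get
\begin{equation*}
z \int \int_{[0,1]^d} e^{-\beta h(x,\g)}\, dx\, P(d\g) = z \int \int_{[0,1]^d} \1_{\{\g \cap B(x,R) = \emptyset\}}\, dx\, P(d\g) \le z,
\end{equation*}
since the integrand is bounded by $1$ and $\lambda^d([0,1]^d)=1$. This gives the upper inequality immediately.

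For the lower bound, the key idea is to bound the exclusion probability from below. Using the same test function, the right-hand side equals $z\, E\big(\int_{[0,1]^d} \1_{\{\G \cap B(x,R)=\emptyset\}}\, dx\big)$, so I need a lower bound on this expected empty-space volume. The natural approach is to note that each existing point of $\G$ can "block" at most a ball of volume $v_d R^d$ worth of potential insertion sites; more precisely, $\int_{[0,1]^d} \1_{\{\G \cap B(x,R)=\emptyset\}}\, dx \ge 1 - v_d R^d\, N_{[0,1]^d \oplus B(0,R)}(\G)$ is too crude, so instead I would relate the empty volume to the total count via a Markov-type estimate. The cleanest route is to apply GNZ a second time, or to use the bound $\1_{\{\G \cap B(x,R)=\emptyset\}} \ge 1 - \sum_{y \in \G} \1_{B(x,R)}(y)$ and integrate in $x$, yielding $\int_{[0,1]^d}\1_{\{\G\cap B(x,R)=\emptyset\}}\,dx \ge 1 - v_d R^d\, N_{[0,1]^d}(\G)$ after accounting for which points contribute. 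Taking expectations and writing $m := E(N_{[0,1]^d}(\G))$ gives $m \ge z(1 - v_d R^d m)$, which rearranges to $m(1 + z v_d R^d) \ge z$, i.e. $m \ge z/(1 + z v_d R^d)$.

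The main obstacle will be making the lower-bound step fully rigorous: one must correctly handle the boundary effects (points near $\partial[0,1]^d$ whose exclusion balls reach in from outside) and verify that the stationarity of $P$ lets us identify $E(N_{[0,1]^d}) = E(N_{B(x,R)})/(v_d R^d)$ type relations so that the double counting in $\sum_{y\in\G}\1_{B(x,R)}(y)$ integrates to exactly $v_d R^d$ times the intensity. The stationarity of the infinite-volume Gibbs measure is essential here precisely to avoid edge corrections and to convert the spatial integral of the indicator into a clean multiple of the intensity $m$.
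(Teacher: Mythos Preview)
Your proposal is correct and follows essentially the same route as the paper: apply the GNZ equation with $f(x,\g)=\1_{[0,1]^d}(x)$, read off the upper bound, and for the lower bound use the Markov-type inequality $\1_{\{\G\cap B(x,R)=\emptyset\}}\ge 1-N_{B(x,R)}(\G)$ together with stationarity to recover $m\ge z(1-v_dR^d m)$. The paper streamlines what you call the ``main obstacle'' by invoking stationarity \emph{before} the Markov step: from GNZ one gets directly $E(N_{[0,1]^d}(\G))=z\,P(\G_{B(0,R)}=\emptyset)$, a single void probability at the origin, so the boundary effects you worry about never arise and the pointwise inequality you wrote (with $N_{[0,1]^d}$ on the right) is not needed---only the in-expectation version $P(\G_{B(0,R)}=\emptyset)\ge 1-E(N_{B(0,R)}(\G))=1-v_dR^d m$ is used.
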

 
Note that the inverse temperature $\beta$ does not play any role here and that $E_P\left(N_{[0,1]^d}(\G)\right)$ is simply the intensity of $\G$.
 
\begin{proof} 
The local energy of such harcore pairwise interaction is given by

$$ h(x,\g)=\sum_{y\in\g_{B(x,R)}} \varphi(|x-y|)= +\infty \1_{\g_{B(x,R)}\neq \emptyset}.$$

So the GNZ equation \eqref{GNZequtaions} with the function $f(x,\g)=\1_{[0,1]^d}(x)$ gives

$$E\left(N_{[0,1]^d}(\G)\right)=z\int_{[0,1]^d} P(\G_{B(x,R)}= \emptyset) dx=z\ P(\G_{B(0,R)}= \emptyset),$$

which provides a relation between the intensity and the spherical contact distribution of $\G$. The upper bound in 
\eqref{borneHardcore} follows. For the lower bound we have
\begin{eqnarray*}
E_P\left(N_{[0,1]^d}(\G)\right) & =& z\ P(\G_{B(0,R)}= \emptyset)\\
& \ge & z\left(1-E_P\left(N_{B(0,R)}(\G)\right)\right)\\
& = & z\left(1-v_dR^rE_P\left(N_{[0,1]^d}(\G)\right)\right). \\
\end{eqnarray*}
\end{proof}

Note also that a natural upper bound for $E_P\left(N_{[0,1]^d}\right) $ is obtained via the closed packing configuration. For instance, in dimension $d=2$, it gives the upper bound $\pi/(2\sqrt{3} R^2)$.

\subsection{Variational principle}\label{SectionVariationalPrinciple}

In this section, we extend the variational principle for finite volume GPP presented in  Proposition \ref{PropVP} to the setting of  infinite volume GPP. For brevity we present only the result without the proof which can be found in \cite{DereudreVP}.

The variational principle claims that the Gibbs measures are the minimizers of the free excess energy defined by the sum of the the mean energy and the specific entropy. Moreover, the minimum is equal to minus the pressure. Let us first define all these macroscopic  quantities.

Let us start by introducing the pressure with free boundary condition. It is defined as the following limit

\begin{equation}\label{pressure}
p^{z,\beta}:=\lim_{n\to +\infty} \frac{1}{|\L_n|} \ln (Z^{z,\beta}_{\L_n}),
\end{equation}

 The existence of such limit is proved for instance in Lemma 1 in \cite{DereudreVP}.

The second macroscopic quantity involves the mean energy of a stationary probability measure $P$. It is also defined by a limit but, in opposition to the pressure, we have to assume that it exists. The proof of such existence is generally based on stationary arguments and  nice representations of the energy contribution per unit volume. It depends strongly on the expression of the energy function $H$. Examples are given below. So for any stationary probability measure $P$ on $\C$ we assume that the following limit exists in $\R\cup\{+\infty\}$,

\begin{equation}\label{meanenergy}
H(P):=\lim_{n\to \infty} \frac{1}{|\L_n|} \int H(\g_{\L_n}) dP(\g),
\end{equation}
and we call the limit mean energy of $P$.

We need to introduce a technical assumption on the boundary effects of $H$. We assume that for any infinite volume Gibbs measure $P$ 

\begin{equation}\label{boundary}
\lim_{n\to\infty} \frac{1}{|\L_n|} \int \partial H_{\L_n}(\g)dP(\g)=0,
\end{equation}
where $\partial H_{\L_n}(\g)= H_{\L_n}(\g)-H(\g_{\L_n})$.

\begin{theorem}[Variational Principle, Theorem 1, \cite{DereudreVP}]\label{TheoremVP}

 We assume that $H$ is stationary and finite range. Moreover, we assume that the mean energy exists for any stationary probability measure $P$ (i.e. the limit \eqref{meanenergy} exists) and that the boundary effects assumption \eqref{boundary} holds. Let $z>0$ and $\beta\ge 0$ two parameters. Then for any stationary probability measure $P$ on $\C$ with finite intensity

\begin{equation}\label{inequality}
 I_1(P)+\beta H(P)-\log(z)E_P(N_{[0,1]^d}) \ge -p^{z,\beta},
\end{equation}
 
with equality if and only if $P$ is a Gibbs measure with activity $z>0$, inverse temperature $\beta$ and energy function $H$. 
\end{theorem}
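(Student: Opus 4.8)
The plan is to establish the variational principle by proving the inequality \eqref{inequality} first, and then analyzing the equality case. The natural strategy mimics the finite volume computation in Proposition \ref{PropVP}, but now passed to the thermodynamic limit. First I would express the finite volume relative entropy using the same decomposition as in the proof of Proposition \ref{existenceAccu}: for the projection $P_{\L_n}$ of a stationary $P$ we have
$$
\I(P_{\L_n}|\pi_{\L_n}) = \int \Big[-\beta H + \log(z) N_{\L_n}\Big] dP_{\L_n} - \log(Z_{\L_n}^{z,\beta}) + \beta\int H_{\L_n} dP_{\L_n} - \int\log(z)N_{\L_n}dP_{\L_n},
$$
so that after rearranging one obtains a finite volume identity relating $\I(P_{\L_n}|\pi_{\L_n})$, $\beta\int H(\g_{\L_n})dP$, $\log(z)E_P(N_{\L_n})$ and $\log(Z_{\L_n}^{z,\beta})$ up to the boundary term $\partial H_{\L_n}$. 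Dividing by $|\L_n|$ and letting $n\to\infty$, the four macroscopic quantities $I_1(P)$, $\beta H(P)$, $\log(z)E_P(N_{[0,1]^d})$ and $p^{z,\beta}$ appear in the limit; the boundary effects assumption \eqref{boundary} guarantees the discrepancy term $\tfrac{1}{|\L_n|}\int\partial H_{\L_n}dP$ vanishes. The key analytic input is that the specific entropy $I_1(P)$ is defined precisely as the limit $\lim_n \tfrac{1}{|\L_n|}\I(P_{\L_n}|\pi_{\L_n})$ in \eqref{entropyspecific}, so the left side converges by definition.

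The inequality \eqref{inequality} itself should then follow from the non-negativity of relative entropy, or more precisely from a Gibbs variational argument at the finite volume level. Concretely, the Jensen/Gibbs inequality already used in Proposition \ref{PropVP} gives at each finite volume
$$
\I(P_{\L_n}|\pi_{\L_n}) + \beta\int H\, dP_{\L_n} - \log(z)E_{P_{\L_n}}(N_{\L_n}) \ge -\log(Z_{\L_n}^{z,\beta}),
$$
which is exactly \eqref{calcul1} read as an inequality for general $P$. Dividing by $|\L_n|$, taking $n\to\infty$, and invoking the existence of the three limits together with \eqref{boundary} to replace $\int H\,dP_{\L_n}$ by $|\L_n|H(P)$ up to negligible boundary contributions, yields \eqref{inequality} with $-p^{z,\beta}$ on the right. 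I would need to be careful that the limit of the sum equals the sum of the limits, which is where the standing assumptions (finite range, existence of mean energy, boundary condition) are doing their work.

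For the equality case, the plan is to argue both directions separately. For the "if" direction, if $P$ is a Gibbs measure then the DLR equations \eqref{DLRinfiniteVolume} hold, and one can localize: on each $\L_n$ the conditional density is explicit, so the finite volume Gibbs inequality becomes an equality up to boundary terms, and the thermodynamic limit forces equality in \eqref{inequality}. For the converse "only if" direction — which I expect to be the genuinely hard part — one assumes equality holds and must deduce the DLR equations. The difficulty is that equality of the limiting specific free energies does not obviously transfer to equality of the local conditional structure, since information can in principle be lost in the limit. The standard route is to show that equality in the specific entropy inequality forces, for almost every boundary condition $\g_{\D^c}$ and every bounded $\D$, that the conditional law $P(d\g_\D|\g_{\D^c})$ minimizes a local free energy functional, which by the finite volume variational principle (Proposition \ref{PropVP}) pins it down to the Gibbsian form; this typically requires an affineness/strict convexity property of the specific entropy together with an ergodic decomposition of $P$. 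Since the full argument is delicate and is carried out in \cite{DereudreVP}, I would at this point defer to that reference rather than reproduce the localization and convexity estimates in detail.
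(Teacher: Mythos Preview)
The paper itself does not prove this theorem: immediately before the statement it says ``For brevity we present only the result without the proof which can be found in \cite{DereudreVP}.'' So there is no in-paper argument to compare against; your sketch is already more than the paper offers, and your final deferral to \cite{DereudreVP} for the hard ``only if'' direction is exactly what the paper does for the entire result.

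That said, one point in your outline is slightly misattributed. The boundary assumption \eqref{boundary} is stated in the paper only for infinite volume \emph{Gibbs} measures, not for arbitrary stationary $P$. For the inequality \eqref{inequality} you do not need it: the finite volume Jensen/Gibbs bound from Proposition~\ref{PropVP} reads
\[
\I(P_{\L_n}|\pi_{\L_n}) + \beta\!\int H(\g_{\L_n})\,dP - \log(z)\,E_P(N_{\L_n}) \;\ge\; -\log(Z_{\L_n}^{z,\beta}),
\]
and the middle term already involves $H(\g_{\L_n})$ with free boundary, so dividing by $|\L_n|$ and passing to the limit uses only \eqref{meanenergy} and \eqref{pressure}. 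Where \eqref{boundary} is genuinely needed is in the ``if'' direction: to show that a Gibbs measure $P$ achieves equality, one computes $\I(P_{\L_n}|\pi_{\L_n})$ via the DLR density, which produces $H_{\L_n}(\g)$ rather than $H(\g_{\L_n})$, and \eqref{boundary} is precisely what kills the discrepancy $\partial H_{\L_n}$ in the limit. Your sketch has the right ingredients but places \eqref{boundary} in the wrong step.
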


Let us finish this section by presenting the two fundamental examples of energy functions satisfying the assumptions of Theorem \ref{TheoremVP}.

\begin{proposition} Let $H$ be the Area energy function defined in \eqref{EnergyArea}. Then both limits \eqref{meanenergy} and \eqref{boundary} exist. In particular, the assumptions of Theorem \ref{TheoremVP} are satisfied and the variational principle holds.
\end{proposition}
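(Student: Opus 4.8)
The goal is to verify, for the Area energy function $H(\g)=\text{Area}(L_R(\g))$, that the mean energy limit \eqref{meanenergy} exists for every stationary $P$ with finite intensity and that the boundary effects assumption \eqref{boundary} holds. My plan is to exploit the additive structure of the Lebesgue measure together with a spatial ergodic (or subadditive/superadditive averaging) argument. First I would write the area of the germ-grain set localized to $\L_n$ as an integral of an indicator over $\R^d$, namely
\begin{equation*}
\text{Area}(L_R(\g_{\L_n}))=\int_{\Rd} \1_{\{w\in L_R(\g_{\L_n})\}}\,\l^d(dw),
\end{equation*}
and observe that $\1_{\{w\in L_R(\g)\}}=\1_{\{\g_{B(w,R)}\neq\emptyset\}}$ is a local and bounded function of $\g$ translated to $w$. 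Integrating against $P$ and using Fubini, the quantity $\frac{1}{|\L_n|}\int H(\g_{\L_n})\,dP(\g)$ becomes a spatial average over $\L_n$ of the stationary function $w\mapsto P(\g_{B(w,R)}\neq\emptyset)=P(\g_{B(0,R)}\neq\emptyset)$. By stationarity this integrand is constant in $w$, so the limit exists and equals $P(\G_{B(0,R)}\neq\emptyset)$; finiteness is automatic since the integrand is bounded by $1$.

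The boundary term is handled by the same localization. The key observation is that $\partial H_{\L_n}(\g)=H_{\L_n}(\g)-H(\g_{\L_n})$ measures only the area contributed by balls that straddle the boundary $\partial\L_n$. Concretely, I would show that the points of $\g$ whose balls $B(x,R)$ can affect the difference lie within a fixed-width collar $\L_n\oplus B(0,2R)\setminus(\L_n\ominus B(0,2R))$ of the boundary, using the finite range property established earlier for the Area interaction (the range is $2R$). Therefore $|\partial H_{\L_n}(\g)|$ is bounded, up to a constant depending on $R$, by the Lebesgue measure of that collar, or more carefully by a quantity of order $\l^{d-1}(\partial\L_n)$ after integrating against $P$ and using the finite intensity of $P$. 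Dividing by $|\L_n|\sim(2n)^d$ and noting that the collar volume is of order $n^{d-1}$, the ratio tends to $0$ as $n\to\infty$, which is exactly \eqref{boundary}.

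The main obstacle, and the step requiring the most care, is making the boundary estimate uniform and rigorous: one must argue that the area discrepancy $\partial H_{\L_n}$ is genuinely controlled by the number of points of $\g$ lying in the boundary collar (each contributing at most $v_d R^d$ to the area), and then take expectation under $P$. Here the finite intensity of $P$ enters decisively, giving $\int N_{\text{collar}}(\g)\,dP(\g)=\z\,\l^d(\text{collar})$ of order $n^{d-1}$, so that $\frac{1}{|\L_n|}\int|\partial H_{\L_n}|\,dP\le C\,\z\,v_d R^d\,n^{d-1}/(2n)^d\to 0$. Once these two facts are in place, all hypotheses of Theorem \ref{TheoremVP} are met for the Area energy, and the variational principle follows directly from that theorem.
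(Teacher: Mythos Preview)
Your approach is correct and essentially the same as the paper's: both exploit that $\text{Area}(L_R(\g_{\L_n}))$ differs from $\text{Area}(L_R(\g)\cap\L_n)$ only on a collar of width $O(R)$ around $\partial\L_n$, and the latter quantity is exactly additive by stationarity. Your Fubini representation with the indicator $\1_{\{\g_{B(w,R)}\neq\emptyset\}}$ is just an explicit rewriting of the paper's decomposition, and it yields the same limit $H(P)=P(\g_{B(0,R)}\neq\emptyset)=\int\text{Area}(L_R(\g)\cap[0,1]^d)\,dP$.

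Two small remarks. First, be careful with the passage from $\1_{\{w\in L_R(\g_{\L_n})\}}$ to $\1_{\{\g_{B(w,R)}\neq\emptyset\}}$: these agree only for $w\in\L_n\ominus B(0,R)$, so your claim that the integrand is constant in $w$ holds only in the interior; the discrepancy lives in the boundary collar and is $O(n^{d-1})$, exactly as in the paper's argument. Second, for the boundary term $\partial H_{\L_n}$ you do not need to count points or invoke the finite intensity of $P$: the bound $|\partial H_{\L_n}(\g)|\le\l^d\big((\L_n\oplus B(0,R))\setminus(\L_n\ominus B(0,R))\big)=O(n^{d-1})$ is purely geometric and holds for \emph{every} $\g$, which is simpler than the point-counting route you describe as ``requiring the most care''. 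The paper handles \eqref{boundary} in exactly this spirit (it only says ``in the same way'').
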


\begin{proof}
Let us prove only that the limit \eqref{meanenergy} exists. The existence of limit \eqref{boundary}  can be shown in the same way. By definition of $H$ and the stationarity of $P$,

\begin{eqnarray}\label{decompositionArea}
\int H(\g_{\L_n}) P(d\g)&=& \int \text{Area}(L_R(\g_{\L_n}))P(d\g)\nonumber\\
&=& \lambda^d(\L_n) \int \text{Area}(L_R(\g)\cap[0,1]^d)P(d\g)\nonumber\\
& & + \int \Big(\text{Area}(L_R(\g_{\L_n}))- \text{Area}(L_R(\g)\cap[-n,n]^d)\Big)P(d\g).
\end{eqnarray}
By geometric arguments, we get that 

$$ \Big|\text{Area}(L_R(\g_{\L_n}))- \text{Area}(L_R(\g)\cap[-n,n]^d) \Big| \le Cn^{d-1}, $$
for some constant $C>0$. We deduce that the limit  \eqref{meanenergy} exists with 
$$H(P)= \int \text{Area}(L_R(\g)\cap[0,1]^d)P(d\g).$$

\end{proof}

\begin{proposition} \label{PropositionVPpairwise}
Let $H$ be the pairwise energy function defined in \eqref{energypairwise} with a superstable, lower regular pair potential with compact support. Then the both limits \eqref{meanenergy} and \eqref{boundary} exist. In particular the assumptions of Theorem \ref{TheoremVP} are satisfied and the variational principle holds.
\end{proposition}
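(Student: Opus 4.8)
The plan is to read both limits \eqref{meanenergy} and \eqref{boundary} as comparisons between a volume term and a negligible surface term, the common device being a tiling of $\Rd$ into the unit cubes $Q_k:=k+[0,1)^d$, $k\in\Z^d$, equipped with a translation-covariant per-cube energy. Assigning to each pair the weight $\mu_k(\{x,y\}):=\tfrac12(\1_{x\in Q_k}+\1_{y\in Q_k})$, so that $\sum_k\mu_k(\{x,y\})=1$, I set $E_k(\g):=\sum_{\{x,y\}\subset\g}\varphi(|x-y|)\,\mu_k(\{x,y\})$; then $\sum_k E_k(\g)=H(\g)$ and $E_k(\g)=E_0(\tau_{-k}\g)$, whence $\int E_k\,dP=\int E_0\,dP$ by stationarity. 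The candidate mean energy is $H(P)=\int E_0\,dP$, and the whole argument reduces to showing that this integral is well defined in $\R\cup\{+\infty\}$ and that both limits localise near $\partial\L_n$.

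For \eqref{meanenergy}, since $\varphi$ is supported in $[0,R]$, every cube $Q_k\subset\L_n$ lying at distance more than $R$ from $\partial\L_n$ satisfies $E_k(\g_{\L_n})=E_k(\g)$ (a pair contributing to $E_k$ has an endpoint in $Q_k$, hence its other endpoint within distance $R$, thus inside $\L_n$); there are $\l^d(\L_n)-O(n^{d-1})$ such bulk cubes and only $O(n^{d-1})$ boundary cubes. To avoid an $\infty-\infty$ ambiguity I would split $\varphi=\varphi^+-\varphi^-$ and establish, once, the following fact for a non-negative finite-range pairwise energy $G$: $\frac{1}{\l^d(\L_n)}\int G(\g_{\L_n})\,dP$ converges in $[0,+\infty]$ to its per-cube density, the finite case following from the bulk identity together with the $O(n^{d-1})$ boundary estimate, and the infinite case from monotone convergence (the bulk term alone diverges). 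Applied to $\varphi^+$ and $\varphi^-$ this produces densities $\int\mathcal P_0\,dP\in[0,+\infty]$ and $\int\mathcal M_0\,dP\in[0,+\infty]$.

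The decisive point is the behaviour of $\int\mathcal M_0\,dP$, and this is exactly where lower regularity and superstability enter. Lower regularity (Definition \ref{superstabilty}, inequality \eqref{lowerregular}) applied to singletons $\g^1=\{x\}$, $\g^2=\{y\}$ gives $\varphi^-(|x-y|)\le 2\psi_{\Vert k-k'\Vert}$ when $x\in Q_k$, $y\in Q_{k'}$; summing and using Cauchy--Schwarz with stationarity yields $\int\mathcal M_0\,dP\le 2\big(\sum_{k'}\psi_{\Vert k'\Vert}\big)E_P(N_{Q_0}^2)$. Hence if $E_P(N_{Q_0}^2)<+\infty$ the negative density is finite, there is no $\infty-\infty$, and $H(P)=\int\mathcal P_0\,dP-\int\mathcal M_0\,dP$ exists in $(-\infty,+\infty]$. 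If instead $E_P(N_{Q_0}^2)=+\infty$, I would invoke the Ruelle superstability bound, which follows from Definition \ref{superstabilty}: there are $a>0$ and $b\ge0$ with $H(\g)\ge\sum_k(aN_{Q_k}(\g)^2-bN_{Q_k}(\g))$; taking expectations gives $\frac{1}{\l^d(\L_n)}\int H(\g_{\L_n})\,dP\ge(1+o(1))\big(aE_P(N_{Q_0}^2)-bE_P(N_{Q_0})\big)\to+\infty$, so the limit exists and equals $+\infty$. In all cases \eqref{meanenergy} holds.

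For the boundary assumption \eqref{boundary}, for a pairwise potential $\partial H_{\L_n}(\g)=H_{\L_n}(\g)-H(\g_{\L_n})=\sum_{x\in\g_{\L_n},\,y\in\g_{\L_n^c},\,|x-y|\le R}\varphi(|x-y|)$ is a sum over pairs straddling $\partial\L_n$ within range $R$, hence confined to an $R$-collar of the boundary formed of $O(n^{d-1})$ cubes. Bounding $|\partial H_{\L_n}|$ by the associated $|\varphi|$-energy and using stationarity reduces $\frac{1}{\l^d(\L_n)}\int|\partial H_{\L_n}|\,dP$ to $O(n^{d-1}/n^d)$ times a single-cube expectation $\int\sum_{x\in\g_{Q_0}}\sum_{y:\,|x-y|\le R}|\varphi(|x-y|)|\,dP$; since $P$ is here an infinite volume Gibbs measure it satisfies Ruelle's estimates \eqref{RuelleEstimatesinfinitevolume}, which for a superstable lower regular interaction entail a finite energy density and hence finiteness of this single-cube expectation, so $\frac{1}{\l^d(\L_n)}\int\partial H_{\L_n}\,dP\to0$. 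I expect the genuine obstacle to be the well-posedness of $\int E_0\,dP$ for an arbitrary stationary $P$, since the attractive part may a priori make both $\int\mathcal P_0\,dP$ and $\int\mathcal M_0\,dP$ infinite; the pairing of lower regularity (which keeps $\mathcal M_0$ below the second moment) with the superstability bound (which turns a divergent second moment into a genuine $+\infty$ density) is precisely what breaks this tie. A secondary subtlety is that $\varphi$ may blow up near $0$, so $\int\mathcal P_0\,dP$ can legitimately be $+\infty$; this is harmless and is absorbed by the monotone-convergence step.
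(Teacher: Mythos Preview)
Your treatment of the mean energy \eqref{meanenergy} is essentially correct and in fact more explicit than the paper, which simply cites Theorem~1 of \cite{Georgii94} for the existence of the limit and the Palm-measure formula. Your cube decomposition with the half-weights $\mu_k$ and the dichotomy on $E_P(N_{Q_0}^2)$ (lower regularity controlling the attractive part when the second moment is finite, the superstability lower bound $H(\g)\ge\sum_k(aN_{Q_k}^2-bN_{Q_k})$ forcing $H(P)=+\infty$ otherwise) is the standard route and works. One small correction: applying \eqref{lowerregular} to singletons $\g^1=\{x\}$, $\g^2=\{y\}$ does \emph{not} yield $\varphi^-(|x-y|)\le 2\psi_{\Vert k-k'\Vert}$; the double sum over $k,k'$ collapses to $2\sum_j\psi_{\Vert j\Vert}$, a uniform bound. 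Your conclusion $\int\mathcal M_0\,dP\le C\,E_P(N_{Q_0}^2)$ is nevertheless correct, because the compact support of $\varphi$ confines the relevant pairs to finitely many neighbouring cubes, and stability gives $\varphi\ge 2A$, so $\varphi^-\le|2A|$ is bounded anyway.

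The boundary assumption \eqref{boundary} is where your argument has a genuine gap. You reduce everything to the finiteness of the single-cube quantity
\[
\int\sum_{x\in\g_{Q_0}}\sum_{y\in\g\setminus\{x\},\,|x-y|\le R}|\varphi(|x-y|)|\,dP(\g),
\]
and assert that ``Ruelle's estimates \eqref{RuelleEstimatesinfinitevolume} \ldots\ entail a finite energy density''. But \eqref{RuelleEstimatesinfinitevolume} is only a sub-Gaussian tail bound on $N_\Delta$; it controls moments of particle numbers, not expectations of $|\varphi|$-sums. If $\varphi$ were bounded this would be enough, but superstability allows $\varphi(r)\to+\infty$ as $r\downarrow 0$ (Lennard--Jones type), and then number moments alone do not bound the absolute pair energy. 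The paper handles precisely this point with a different device: it applies the GNZ equation \eqref{GNZequtaions} to introduce the Boltzmann factor $e^{-\beta h(x,\g)}=\prod_y e^{-\beta\varphi(x-y)}$, then uses the elementary bound $\sup_{c\ge 2A}|c|e^{-\beta c}<\infty$ term by term, reducing the estimate to $\int N_{B(0,R_0)}e^{-2\beta A\,N_{B(0,R_0)}}\,dP$, which \emph{is} finite by \eqref{RuelleEstimatesinfinitevolume}. That GNZ-plus-$|c|e^{-\beta c}$ trick is the missing ingredient in your boundary argument; once you insert it, your single-cube expectation is indeed finite and the rest of your $O(n^{d-1}/n^d)$ reasoning goes through.
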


\begin{proof}

Since the potential $\varphi$ is stable with compact support, we deduce that $\varphi\ge 2A$ and $H$ is finite range and lower regular. In this setting, the existence of the limit \eqref{meanenergy} is proved in  \cite{Georgii94}, Theorem 1 with

\begin{equation}
 H(P)=\left\{ \begin{array}{ll}
\frac 12\int \sum_{0\neq x\in\g} \varphi(x)P^0(d\g) & \text{ if  } E_P(N^2_{[0,1]^d})<\infty \\
+\infty & \text{ otherwise}
\end{array}\right.
\end{equation}

where $P^0$ is the Palm measure of $P$. Recall that $P^0$ can be viewed as the natural version of the conditional probability $P(.|0\in\g)$ (see \cite{MKM} for more details). It remains to prove the existence of the limit \eqref{boundary} for any Gibbs measure $P$ on $\C$. A simple computation gives that, for any $\g\in\C$,

$$ \partial H_{\L_n}(\g)= \sum_{x\in\g_{\L_{n}^\oplus\backslash \L_n}}
\sum_{ y\in\g_{\L_n\backslash \L_n^\ominus}} \varphi(x-y),$$

where $\L_n^\oplus=\L_{n+R_0}$ and $\L_n^\ominus=\L_{n-R_0}$ with $R_0$ an integer larger than the range of the interaction $R$. 

Therefore thanks to the stationarity of $P$ and the GNZ equations \eqref{GNZequtaions}, we obtain

\begin{eqnarray*}
\left|\int\partial H_{\L_n}(\g) dP(\g)\right| & \le & \int\sum_{x\in\g_{\L_{n}^\oplus\backslash \L_n}} \sum_{y\in\g\backslash \{x\}} |\varphi(x-y)|dP(\g)\\
 & = & z\int\int_{\L^\oplus_{n}\backslash \L_n} e^{-\beta\sum_{y\in\g} \varphi(x-y)}  \sum_{y\in\g} |\phi(x-y)|dx dP(\g)\\
 & = &z|\L^\oplus_{n}\backslash \L_n| \int e^{-\beta\sum_{y\in\g_{B(0,R_0)}} \varphi(y)}  \sum_{y\in\g_{B(0,R_0)}} |\varphi(y)| dP(\g).\\
\end{eqnarray*}

Since $\varphi\ge 2A$, denoting by $C:=\sup_{c\in[2A;+\infty)} |c|e^{-\beta c}<\infty$ we find that

\begin{eqnarray}\label{bord}
\left|\int\partial H_{\L_n}(\g) dP(\g)\right|
 & \le & zC|\L^\oplus_{n}\backslash \L_n| \int N_{B(0,R_0)}(\g)e^{-2\beta A N_{B(0,R_0)}(\g)} dP(d\g).
\end{eqnarray}
Using Ruelle estimates \eqref{RuelleEstimatesinfinitevolume}, the integral in the right term of \eqref{bord} is finite. The boundary assumption \eqref{boundary} follows.

\end{proof}

\subsection{A uniqueness result}\label{SectionUniqueness}

In this section we investigate the uniqueness of infinite volume Gibbs measures. The common belief claims that the Gibbs measures are unique when the activity $z$ or (and) the inverse temperature $\beta$ are small enough (low activity, high temperature regime). The non-uniqueness phenomenon (discussed in the next section) are in general related to some issues with the energy part in the variational principle (see Theorem \ref{TheoremVP}). Indeed, either the mean energy has several minimizers or there is a conflict between the energy and the entropy. Therefore it is natural to expect the the Gibbs measures are unique when $\beta$ is small enough. When $z$ is small, the mean number of points per unit volume is low and so the energy is in general low as well. 

As far as we know, there do not exist general results which prove the uniqueness for small $\beta$ or small $z$. In the case of pairwise energy functions \eqref{energypairwise}, the uniqueness for any $\beta>0$ and $z>0$ small enough is proved  via the Kirkwood-Salsburg equations (see Theorem 5.7 \cite{Ruelle70}). An extension of the Dobrushin uniqueness criterium in the continuum is developed as well \cite{DobPer}.
The uniqueness of GPP can also be obtained via the cluster expansion machinery which provides a power series expansion of the partition function when $z$ and $\beta$ are small enough. This approach has been introduced first by Mayer and Montroll \cite{MayerMontroll} and we refer to \cite{PU} for a general presentation.

In this section we give a simple and self-contained proof of the uniqueness of GPP for all $\beta\ge 0$ and  any  $z>0$ small enough. We just assume that the energy function $H$ has a local energy $h$ uniformly bounded from below. This setting covers for instance the case of pairwise energy function \eqref{energypairwise} with non-negative pair potential or the Area energy function \eqref{EnergyArea}.  
  
Let us start by recalling the existence of a percolation threshold for the Poisson Boolean model. For any configuration $\g\in\C$ the percolation of $L_R(\g)=\cup_{x\in\g} B(x,R)$ means the existence of an unbounded connected component in $L_R(\g)$ .

\begin{proposition}[Theorem 1 \cite{Hall}] \label{PropoPerco} For any $d\ge 2$, there exists $0<z_d<+\infty$ such that for $z<z_d$,  $\pi^z(L_{1/2}\text{ percolates })=0$ and for $z>z_d$,  $\pi^z(L_{1/2}\text{ percolates })=1$.
\end{proposition}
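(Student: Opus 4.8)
The plan is to prove the statement in four movements: a zero-one law for the percolation event, monotonicity of the percolation probability in $z$ (which together already produce a threshold), and then two quantitative estimates showing that this threshold is strictly positive and finite.

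First I would observe that the event ``$L_{1/2}$ percolates'', namely that $L_{1/2}(\g)=\bigcup_{x\in\g}B(x,1/2)$ contains an unbounded connected component, is invariant under every translation $\tau_u$, $u\in\Rd$. Since $\pi^z$ is translation invariant and ergodic (in fact mixing) with respect to the translation group, every translation-invariant event has $\pi^z$-probability $0$ or $1$, giving the dichotomy $\pi^z(L_{1/2}\text{ percolates})\in\{0,1\}$ for each $z$. Next I would establish monotonicity: for $z_1<z_2$ one couples $\pi^{z_1}$ and $\pi^{z_2}$ by superposing an independent Poisson process of intensity $z_2-z_1$, so that the configuration at intensity $z_1$ is contained in the one at intensity $z_2$; since adding points can only enlarge $L_{1/2}$ and so can only create, never destroy, unbounded components, the percolation probability is non-decreasing in $z$. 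Combined with the dichotomy, the set $\{z:\pi^z(\text{percolation})=1\}$ is an up-set, and I would define $z_d$ as its infimum. It then remains to show $0<z_d<+\infty$.

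For the lower bound I would dominate the cluster of a typical point by a subcritical branching process. Two grains $B(x,1/2)$ and $B(y,1/2)$ intersect exactly when $|x-y|\le 1$, so connectivity is that of the random geometric graph with connection radius $1$. Exploring the cluster of a point by breadth-first search, the number of newly discovered neighbours at each step is stochastically dominated by a Poisson variable of mean $z v_d$, the expected number of Poisson points in a ball of radius $1$ (the domination, rather than equality, comes from discarding already-explored regions). Hence the cluster is dominated by a Galton--Watson tree with mean offspring $z v_d$, which is almost surely finite when $z v_d<1$; then all clusters are bounded and $\pi^z(\text{percolation})=0$, so that $z_d\ge 1/v_d>0$. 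For the upper bound I would compare with independent site percolation on $\Z^d$: tile $\Rd$ by the cubes $sk+[0,s]^d$, $k\in\Z^d$, with $s=1/\sqrt{d+3}$, and call a cube occupied if it contains at least one point of $\g$. The counts on disjoint cubes being independent, the occupancies are i.i.d. Bernoulli with parameter $p(z)=1-e^{-zs^d}$, and the side length is chosen so that any two points in face-adjacent cubes lie at distance at most $s\sqrt{d+3}=1$, forcing their grains of radius $1/2$ to overlap. Thus an infinite cluster of occupied cubes yields an unbounded component of $L_{1/2}$. Since $p_c^{\mathrm{site}}(\Z^d)<1$ for every $d\ge 2$, taking $z$ large enough that $p(z)>p_c^{\mathrm{site}}(\Z^d)$ produces discrete percolation, hence continuum percolation with positive probability, hence (by the zero-one law) with probability $1$; therefore $z_d<+\infty$.

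The two comparison arguments carry the real content, and I expect the supercritical direction to be the main obstacle: one must choose the mesh $s$ so that face-adjacency of occupied cubes genuinely forces grain overlap while the occupancy variables remain independent, and then invoke the nontriviality $p_c^{\mathrm{site}}(\Z^d)<1$. In the subcritical direction the only subtlety is to verify that the breadth-first exploration is truly dominated by the branching process rather than merely heuristically comparable, but this domination goes in the favourable direction and follows from the independence properties of $\pi^z$ on disjoint sets.
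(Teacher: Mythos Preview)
Your argument is correct and follows the standard route: ergodicity gives the zero--one law, the superposition coupling gives monotonicity, a branching-process domination handles the subcritical side, and a block discretization plus $p_c^{\mathrm{site}}(\Z^d)<1$ handles the supercritical side. One small remark: with $s=1/\sqrt{d+3}$ the extremal points in face-adjacent cubes are at distance exactly $1$, so the closed balls of radius $1/2$ merely touch; this suffices since the paper uses closed balls, but taking $s$ strictly smaller removes any ambiguity at no cost.

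As for the comparison you asked for: the paper does not prove this proposition at all. It is quoted as an external result (Theorem~1 of Hall) and used as a black box in the proof of Theorem~\ref{theoreunicite}. So there is no ``paper's own proof'' to align with; your sketch simply supplies what the paper delegates to the reference.
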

 
The value $z_d$ is called the percolation threshold of the Poisson Boolean model with radius $1/2$. By scale invariance, the percolation threshold for any other radius $R$ is simply $z_d/(2R)^d$.  The exact value of $z_d$ is unknown but numerical studies provide for instance the approximation $z_2\simeq 1.4$ in dimension $d=2$.

\begin{theorem} \label{theoreunicite} Let $H$ be an energy function with finite range $R>0$ such that the local energy $h$ is uniformly bounded from below by a constant $C$. Then for any $\beta\ge 0$ and $z<z_d e^{C\beta} /R^d$, there exists an unique Gibbs measure with energy function $H$, activity $z>0$ and inverse temperature $\beta$. 
\end{theorem}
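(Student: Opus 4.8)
The plan is to show that any two infinite volume Gibbs measures $P_1$ and $P_2$ with the same parameters $(z,\beta)$ coincide on every local event; since $\FC$ is generated by the counting functions on bounded windows, this forces $P_1=P_2$. The engine is a disagreement percolation coupling controlled by the absence of percolation in a subcritical Boolean model. First I would record the stochastic domination implied by the assumption $h\ge C$: for $\beta\ge 0$ the ``birth rate'' $z\,e^{-\beta h(x,\g)}$ appearing on the right-hand side of the GNZ equations \eqref{GNZequtaions} satisfies $z\,e^{-\beta h(x,\g)}\le z\,e^{-\beta C}$, so every Gibbs measure, and more importantly every finite volume Gibbs specification with a boundary condition, is stochastically dominated by the Poisson process of intensity $z\,e^{-\beta C}$. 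The relevant geometric structure is the Boolean model $L_{R/2}$ of radius $R/2$, because two points $x,y$ interact only when $|x-y|\le R$, that is, exactly when $B(x,R/2)\cap B(y,R/2)\neq\emptyset$.

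Next I would fix a bounded window $\D$ and a large box $\L\supset\D$. By the DLR equations \eqref{DLRinfiniteVolume}, the restriction of each $P_i$ to $\L$ is a mixture, over the boundary condition $\g_{\L^c}$ drawn from $P_i$, of the finite volume Gibbs measure on $\L$ with that boundary condition. The heart of the proof is a coupling: given two boundary conditions $\g^1_{\L^c}$ and $\g^2_{\L^c}$, I would construct on one probability space two fields with these respective conditional laws, built by a sequential (exploration) thinning of a single dominating Poisson process, so that the two fields agree everywhere outside the union of those connected components of $L_{R/2}(\cdot)$ that are linked to the boundary $\partial\L$. Because at each step an accepted point is retained with probability at most the domination bound, the resulting disagreement cluster is stochastically dominated by the cluster of $\partial\L$ in the Boolean model associated with $\pi^{ze^{-\beta C}}$ at radius $R/2$.

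Then I would invoke subcriticality. The hypothesis $z<z_d\,e^{C\beta}/R^d$ is precisely $z\,e^{-\beta C}<z_d/R^d$, and by the scale invariance noted after Proposition \ref{PropoPerco} the value $z_d/R^d$ is the percolation threshold of the radius-$R/2$ Boolean model. Hence $\pi^{ze^{-\beta C}}$ does not percolate, so the probability that the cluster of $\partial\L$ reaches the fixed window $\D$ tends to $0$ as $\L\uparrow\Rd$. On the complementary event the two coupled fields coincide on $\D$, so the laws of their traces on $\D$ have total variation distance bounded by that connection probability. Integrating over the boundary conditions and letting $\L\uparrow\Rd$ yields that $P_1$ and $P_2$ have the same marginal on $\D$ for every bounded $\D$, whence $P_1=P_2$.

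The main obstacle is the construction of the disagreement coupling in the continuum and the verification that the disagreement set is genuinely dominated by an independent Boolean model. This requires a careful exploration scheme, revealing the two fields region by region and coupling them maximally at each step so that a disagreement at a newly revealed point can be triggered only by an existing disagreement within distance $R$, together with a domination lemma (in the spirit of van den Berg--Maes and Georgii--H\"aggstr\"om) ensuring that the occupied structure of the explored region stays below the subcritical Boolean model uniformly in the two boundary conditions. Compared with the lattice setting one must also handle that the exploration is driven by a Poisson process rather than a finite deterministic vertex set, and justify the passage to the limit $\L\uparrow\Rd$; this is exactly where the finite range property and the stochastic domination of the first step are used.
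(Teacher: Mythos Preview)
Your proposal is correct and follows essentially the same route as the paper: stochastic domination of the Gibbs specification by $\pi^{ze^{-\beta C}}$ (the paper's Lemma~\ref{LemmaGK}), a disagreement percolation coupling in which the two fields agree outside the boundary-connected components of the dominating Boolean model (the paper's Lemma~\ref{disagreement}), and then subcriticality of that Boolean model to send the disagreement probability to zero as $\L\uparrow\Rd$. The only nuance you should watch when writing it out is that the paper's exploration runs on a grid of small cubes, which forces working at a slightly enlarged radius $R'/2>R/2$ (chosen so that $ze^{-\beta C}R'^d<z_d$, possible by the strict inequality in the hypothesis) rather than exactly $R/2$; otherwise your sketch matches the paper's argument.
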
 

\begin{proof}

The proof is based on two main ingredients. The first one is the stochastic domination of Gibbs measures, with uniformly bounded from below local energy function $h$, by Poisson processes. This result is given in the following lemma, whose proof can be found in \cite{GK}. The second ingredient is a disagreement percolation result presented in Lemma \ref{disagreement} below.

\begin{lemma}\label{LemmaGK}
 Let $H$ be an energy function such that the local energy $h$ is uniformly bounded from below by a constant $C$.  Then for a any bounded set $\D$ and any outside configuration $\g_{\D^c}$ the Gibbs distribution inside $\D$ given by 
$$P^{z,\beta}(d\g_\D|\g_{\D^c}) = \frac{1}{Z_{\D}^{z,\beta}(\g_{\D^c})} z^{N_\D(\g)} e^{-\beta H_\D(\g)} \pi_\D(d\g_\D)$$
is stochastically dominated by the Poisson point distribution  $\pi_\D^{ze^{-C\beta}}(d\g_\D)$.
\end{lemma}

Thanks to Strassen's Theorem, this stochastic domination can be interpreted via the  following coupling (which could be the definition of the stochastic domination): There exist two point processes $\G$ and $\G'$ on $\Delta$ such that $\G\subset \G'$, $ \G\sim P^{z,\beta}(d\g_\D|\g_{\D^c})$ and $\G'\sim  \pi_\D^{ze^{-C\beta}}(d\g_\D)$.

Now the rest of the proof of Theorem \ref{theoreunicite} consists 
in showing that the Gibbs measure is unique as soon as $\pi^{ze^{-C\beta}}(L_{R/2} \text{percolates})=0$. Roughly speaking, if the dominating process does not percolate, the information coming from the boundary condition does not propagate in the heart of the model and the Gibbs measure is unique. To prove rigorously this phenomenon, we need a disagreement percolation argument introduced first in  \cite{MV94}. For any sets $A,B\in\R^d$, we denote by $A\ominus B$ the set $(A^c\oplus B)^c$.

\begin{lemma}\label{disagreement}
Let $\g_{\D^c}^1$ and $\g_{\D^c}^2$ be two  configurations on $\D^c$. For any $R'>R$, there exist three point processes $\G^1$, $\G^2$ and $\G'$ on $\Delta$ such that $\G^1\subset \G'$, $\G^2\subset \G'$, $ \G^1\sim P^{z,\beta}(d\g_\D|\g^1_{\D^c})$,  $ \G^2\sim P^{z,\beta}(d\g_\D|\g^2_{\D^c})$ and $\G'\sim  \pi_\D^{ze^{-C\beta}}(d\g_\D)$. Moreover, denoting by $L^\D_{R'/2}(\G')$ the connected components of $L_{R'/2}(\G')$ which are inside $\D\ominus B(0,R'/2)$, then $\G^1=\G^2$ on the set  $L^\D_{R'/2}(\G')$.
\end{lemma}

\begin{proof}
Let us note first that, by Lemma \ref{LemmaGK}, there exist three point processes $\G^1$, $\G^2$ and $\G'$ on $\Delta$ such that $\G^1\subset \G'$, $\G^2\subset \G'$, $ \G^1\sim P^{z,\beta}(d\g_\D|\g^1_{\D^c})$,  $ \G^2\sim P^{z,\beta}(d\g_\D|\g^2_{\D^c})$ and $\G'\sim  \pi_\D^{ze^{-C\beta}}(d\g_\D)$. The main difficulty is now to show that we can build
 $\G^1$ and $\G^2$  such that $\G^1=\G^2$ on the set  $L^\D_{R'/2}(\G')$. 
 
 Let us decompose $\D$ via a grid of small cubes where each cube has a diameter smaller than $\epsilon=(R'-R)/2$. We define an arbitrary numeration of these cubes $(C_i)_{1\le i\le m}$ and we construct progressively the processes  $\G^1$, $\G^2$ and $\G'$ on each cube $C_i$. Assume that they are already constructed on $C_I:=\cup_{i\in I} C_i$ with all the expected properties:$\G^1_{C_I}\subset \G'_{C_I}$, $\G^2_{C_I}\subset \G'_{C_I}$, $ \G^1_{C_I}\sim P^{z,\beta}(d\g_{C_I}|\g^1_{\D^c})$,  $ \G^2_{C_I}\sim P^{z,\beta}(d\g_{C_I}|\g^2_{\D^c})$, $\G'_{C_I}\sim  \pi_{C_I}^{ze^{-C\beta}}(d\g_{C_I})$ and $\G^1_{C_I}=\G^2_{C_I}$ on the set  $L^\D_{R'/2}(\G'_{C_I})$.   Let us consider the smaller index $j\in\{1,\ldots m\}\backslash I$ such that either the distances $d(C_j,\g_{\D^c}^1)$ or $d(C_j,\g_{\D^c}^2)$ or $d(C_j,\G'_{C_I})$ is smaller than $R'-\epsilon$. 
 
 \begin{itemize}
 \item If such an index $j$ does not exist, by the finite range property the following  Gibbs distributions coincide  on $\D^I=\D\backslash C_I$;

$$ P^{z,\beta}(d\g_{\D^I}|\g^1_{\D^c}\cup \G^1_{C_I})=  P^{z,\beta}(d\g_{\D^I}|\g^2_{\D^c}\cup \G^2_{C_I}).$$
 
Therefore we define $\G^1$, $\G^2$ and $\G'$ on $\D^I$ by considering $\G_{\D^I}^1$ and $\G_{\D^I}'$ as in Lemma \ref{LemmaGK} and by putting $\G_{\D^I}^2=\G_{\D^I}^1$. We can easily check that all expected properties hold and the full construction of $\G^1$, $\G^2$ and $\G'$ is over.

\item If such an index $j$ does exist, we consider the double coupling construction of $\G^1$, $\G^2$ and $\G'$ on $\D^I$. It means that $\G^1_{\D^I}\subset \G'_{\D^I}$, $\G^2_{\D^I}\subset \G'_{\D^I}$, $ \G^1_{\D^I}\sim  P^{z,\beta}(d\g_{\D^I}|\g^1_{\D^c}\cup \G^1_{C_I})$,  $\G^2_{\D^I}\sim P^{z,\beta}(d\g_{\D^I}|\g^2_{\D^c}\cup \G^2_{C_I})$ and $\G'_{\D^I}\sim  \pi_{\D^I}^{ze^{C\beta}}(d\g_\D)$. Now we keep these processes $\G^1_{\D^I}$, $\G^2_{\D^I}$ and $\G'_{\D^I}$ only on the window $C_j$. The construction of the processes $\G^1$, $\G^2$ and $\G'$ is now over $C_I\cup C_j$ and we can check again that all expected properties hold. We go on to the construction of the processes on a new cube in $(C_i)_{i\in \{1,\ldots n\}\backslash \{I,j\}}$ and so on.
\end{itemize}
\end{proof}

Let us now finish the proof of Theorem \ref{theoreunicite} by considering two infinite volume GPP $\tilde\G^1$ and $\tilde\G^2$ with distribution $P^1$ and $P^2$. We have to show that for any local event $A$ $P^1(A)=P^2(A)$. We denote by $\D_0$ the support of such an event $A$. Let us consider a bounded subset $\D\supset\D_0$ and three new processes $\G_\D^1$, $\G_\D^2$ and $\G_\D'$ on $\Delta$ constructed as in Lemma \ref{disagreement}. Precisely, for any $i=1,2$  
$\G_\D^i\subset \G_\D'$, $\G'\sim  \pi_\D^{ze^{-C\beta}}$, the conditional distribution of $ \G_\D^i$ given $\tilde \G^i_{\D^c}$ is $P^{z,\beta}(|\tilde\G^i_{\D^c})$ and $\G_\D^1=\G_\D^2$ on the set  $L^\D_{R'/2}(\G_\D')$. The parameter $R'>R$ is chosen such that

\begin{equation}\label{choixR}
ze^{-C\beta} R'^d<z_d
\end{equation}
which is possible by assumption on $z$. 

Thanks to the DLR equations \eqref{DLRinfiniteVolume}, for any $i=1,2$ the processes $\G^i_\D$ and $\tilde \G^i_\D$ have the same distributions and therefore $ P^i(A)=P(\G_\D^i\in A)$. Denoting by $\{\Delta\leftrightarrow\D_0\}$ the event that there exists a connected component in $L_{R'/2}(\G')$ which intersects $(\D\ominus B(0,R'/2))^c$ and $\D_0$, we obtain that

\begin{eqnarray}\label{domination}
|P^1(A)-P^2(A)| & =& |P(\G_\D^1\in A)-P(\G_\D^2\in A)|\nonumber \\
&\le & E\bigg(\1_{\{\Delta\leftrightarrow\D_0\}}\Big|\1_{\G_\D^1\in A}-\1_{\G_\D^2\in A}\Big|\bigg)+E\bigg(\1_{\{\Delta\leftrightarrow\D_0\}^c}\Big|\1_{\G_\D^1\in A}-\1_{\G_\D^2\in A}\Big|\bigg)\nonumber \\
& \le & P(\{\Delta\leftrightarrow\D_0\})+E\bigg(\1_{\{\Delta\leftrightarrow\D_0\}^c}\Big|\1_{\G_\D^1\in A}-\1_{\G_\D^1\in A}\Big|\bigg)\nonumber \\
& =&  P(\{\Delta\leftrightarrow\D_0\}).
\end{eqnarray}

By the choice of $R'$ in inequality \eqref{choixR} and Proposition \ref{PropoPerco}, it follows that  $$\pi^{ze^{-C\beta}}\left(L_{R'/2} \text{percolates}\right)=0$$ and we deduce, by a monotonicity argument, the probability $P(\{\Delta\leftrightarrow\D_0\})$ tends to $0$ when $\Delta$ tends to $\R^d$ (see \cite{meeroy} for details on equivalent characterizations of continuum percolation). The left term in \eqref{domination} does not depend on $\Delta$ and therefore it is null. Theorem \ref{theoreunicite} is proved.

\end{proof}

\subsection{A non-uniqueness result}\label{SectionNonUniqueness}

In this section we discuss the non-uniqueness phenomenon of infinite volume Gibbs measures. It is believed to occur for almost all models provided that the activity $z$ or the inverse temperature $\beta$ is large enough. However, in the present continuous setting without spin, it is only proved for few models and several old conjectures are still valid. For instance, for the pairwise Lennard-Jones interaction defined in \eqref{energypairwise}, it is conjectured that for $\beta$ large (but not too large) there exists an unique $z$ such that the Gibbs measures are not unique. It would correspond to a liquid-vapour phase transition. Similarly for $\beta$ very large, it is conjectured that the non-uniqueness occurs as soon as $z$ is larger than a threshold $z_\beta$. It would correspond to a crystallization phenomenon  for which a symmetry breaking may occur. Indeed, it is expected, but not proved at all, that some continuum Gibbs measures would be not invariant under symmetries like  translations, rotations, etc. This conjecture is probably one of the most important and difficult challenges in statistical physics. In all cases, the non-uniqueness appear when the local distribution of  infinite volume Gibbs measures depend on the boundary conditions "at infinity".

In this section we give a complete proof of such non-uniqueness result for the Area energy interaction presented in $\eqref{EnergyArea}$. This result has been first proved in \cite{Widom70} but our proof is inspired by the one given in \cite{CCK}. Roughly speaking, we build two different Gibbs measures which depend, via a percolation phenomenon, on the  boundary conditions "at infinity". In one case, the boundary condition "at infinity" is empty and in the other case the boundary condition is full of particles. We show that the intensity of both infinite volume Gibbs measures are different.

 Let us cite another famous non-uniqueness result   for attractive pair and repulsive four-body potentials \cite{LMP}. As far as we know, this result and the one presented below on the Area interaction, are the only rigorous proofs of non-uniqueness results for continuum particles systems without spin.

 \begin{theorem} \label{TheoremeNonUniqueness}
 For $z=\beta$ large enough, the infinite volume Gibbs measures for the Area energy function $H$ presented in \eqref{EnergyArea}, the activity $z$ and the inverse temperature $\beta$ are not unique.
\end{theorem}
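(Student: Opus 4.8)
The plan is to exploit the well-known correspondence between the Area (Widom--Rowlinson) interaction and a two-type (black/white) gas, and to run a Peierls-type contour argument on the associated Boolean model via the random cluster (Fortuin--Kasteleyn) representation. Concretely, observe that $e^{-\beta\,\mathrm{Area}(L_R(\g))}$ can be rewritten by a Poisson integration over a second, independent species: thinking of $\g$ as ``white'' centres, the factor $e^{-\beta\lambda^d(L_R(\g))}$ equals the $\pi^\beta$-probability that an auxiliary Poisson field of ``black'' centres places no point within distance $R$ of any white point. With $z=\beta$ this produces a symmetric two-type Widom--Rowlinson model in which white and black points are forbidden to lie within distance $2R$ of each other, and the two colours play perfectly symmetric roles. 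The symmetry is the engine of the whole argument.

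First I would make this representation rigorous in finite volume, writing $P_{\L_n}^{z,\beta}$ as the white marginal of a symmetric bicoloured Gibbs measure $\tilde P_{\L_n}$ on pairs $(\g^w,\g^b)$ with the hardcore exclusion $d(\g^w,\g^b)\ge 2R$. Then I would pass to the grey/FK picture: integrating out the colours conditionally on the union $\g^w\cup\g^b$ yields a random cluster measure on the grey configuration in which each connected component of $L_R(\g^w\cup\g^b)$ independently receives a uniformly random colour. The two extremal boundary conditions --- all white at infinity versus all black at infinity --- then correspond to conditioning the colour of the unbounded cluster. The key quantitative step is a Peierls estimate: for $z=\beta$ large, the grey Boolean model is deep in its percolating (supercritical) phase, so with probability bounded away from zero there is a unique unbounded grey component, and the cost of a contour separating a fixed box from infinity decays exponentially in the contour length. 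This forces the conditional probability that the origin's colour agrees with the boundary colour to differ between the two boundary conditions.

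From here I would extract two distinct infinite volume Gibbs measures $P^{+}$ and $P^{-}$ as limits (along the subsequence furnished by Proposition~\ref{existenceAccu}) of the finite-volume measures with, respectively, a dense white and a dense black boundary layer, and verify via Theorem~\ref{THexistence} and the DLR formulation \eqref{DLRintegral} that both are genuine Gibbs measures for the Area interaction. The colour symmetry gives $E_{P^{+}}(N_{[0,1]^d}) = E_{P^{-}}(N'_{[0,1]^d})$ for the opposite species, so the Peierls bound translates directly into a strict inequality $E_{P^{+}}(N_{[0,1]^d}) \neq E_{P^{-}}(N_{[0,1]^d})$ for the white intensity, which certifies $P^{+}\neq P^{-}$ and hence non-uniqueness.

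The main obstacle, and the step deserving the most care, is the Peierls/contour estimate in the continuum. Unlike the lattice case there is no canonical cell structure, so I would discretise by overlaying a fine grid (cell diameter below $\epsilon R$) and declaring a cell occupied according to the presence of grey points, reducing continuum connectivity to a comparison with supercritical site percolation on $\Z^d$; controlling the geometry of Boolean contours and showing the comparison is valid uniformly in $\L_n$ is the delicate part. The percolation input itself --- that $z=\beta$ large places the grey model above threshold --- is exactly the kind of statement underlying Proposition~\ref{PropoPerco}, so the analytic difficulty is concentrated entirely in turning percolation of the grey phase into a quantitative symmetry-breaking bound on the intensities rather than in the existence of the limiting measures.
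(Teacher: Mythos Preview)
Your high-level strategy coincides with the paper's: rewrite the Area model as the colour marginal of a symmetric two-type Widom--Rowlinson model, pass to the random cluster (grey) representation, and turn percolation of the grey phase into a strict inequality between the intensities of two infinite-volume Gibbs measures built with opposite boundary data. Where you diverge from the paper is in the ``key quantitative step''.

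You propose a Peierls/contour argument carried out by discretising to a fine grid and comparing with supercritical site percolation on $\Z^d$. The paper avoids this machinery entirely. Its observation is that the random cluster process on $\L_n$, with density proportional to $z^{N_{\L_n}}2^{N_{cc}^{\L_n}}$, is itself a Gibbs point process whose local energy $\hat h(x,\g)=N_{cc}^{\L_n}(\g)-N_{cc}^{\L_n}(\g\cup\{x\})$ is bounded \emph{above} by a purely geometric constant $c_d$ (a ball of radius $R/2$ can meet at most a fixed number of disjoint such balls). The Georgii--K\"uneth stochastic comparison (the companion to Lemma~\ref{LemmaGK}) then gives that the grey process \emph{dominates} the Poisson process $\pi_{\L_n}^{2ze^{-c_d}}$, so for $z>z_d e^{c_d}/(2R^d)$ the grey phase percolates by Proposition~\ref{PropoPerco} directly. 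No discretisation, no contour counting. The intensity gap $i(\bar Q)-i(\bar P)$ is then bounded below by the intensity $\alpha>0$ of points in the infinite cluster of the dominating Poisson Boolean model, because in the FK coupling all boundary-touching clusters are deterministically assigned to the second species.

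So your route is not wrong, but the step you flag as ``the delicate part'' --- showing the grey model percolates via a grid comparison --- is precisely what the paper sidesteps. Note also that your appeal to Proposition~\ref{PropoPerco} for the grey model is a genuine gap as written: that proposition concerns the \emph{Poisson} Boolean model, and the grey model is not Poisson; you need either the stochastic domination the paper uses, or a full coarse-graining argument to transfer the percolation statement. If you want to keep your Peierls approach, that transfer is the substantive work left to do; if you are willing to import the domination lemma, the proof becomes considerably shorter.
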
 

\begin{proof}

In all the proof we fix $z=\beta$. Let us consider following finite volume Gibbs measures on $\L_n=[-n,n]^d$ with different boundary conditions:

$$ dP_{\L_n}(\g)=\frac{1}{Z_{\L_n}}\1_{\left\{\g_{\L_n\backslash \L_n^\ominus}=\emptyset\right\}} z^{N_{\L_n}(\g)}e^{-z\text{Area}\big(\L_n\cap L_R(\g)\big)}d\pi_{\L_n}(\g),$$

and

$$ dQ_{\L_n}(\g)=\frac{1}{ Z'_{\L_n}}z^{N_{\L_n}(\g)} e^{-z\text{Area}\big(\L_n^\ominus \cap L_R(\g)\big)}d\pi_{\L_n}(\g),$$

where $\L_n^\ominus=\L_n \ominus B(0,R/2)$. Recall that $R$ is the radius of balls in $L_R(\g)=\cup_{x\in\g} B(x,R)$ and that the range of the interaction is $2R$. As in Section \ref{SectionAccu} we consider the associated empirical fields $\bar P_{\L_n}$ and $\bar Q_{\L_n}$ defined by
$$\int f(\g)d\bar P_{\L_n}(\g)= \frac{1}{\l^d(\L_n)} \int_ {\L_n} f(\tau_u(\g)) dP_{\L_n}(\g)du$$
and
$$\int f(\g)d\bar Q_{\L_n}(\g)= \frac{1}{\l^d(\L_n)} \int_ {\L_n} f(\tau_u(\g)) dQ_{\L_n}(\g)du,$$
where $f$ is any measurable bounded test function.
Following the proof of Proposition \ref{existenceAccu} we get the existence of an accumulation point $\bar P$ (respectively $\bar Q$) for $(\bar P_{\L_n})$ (respectively $(\bar Q_{\L_n})$). As in Theorem \ref{THexistence}, we show that $\bar P$ and $\bar Q$ satisfy the DLR equations and therefore they are both infinite volume Gibbs measures for the Area energy function, the activity $z$ and the inverse temperature $\beta=z$. Now it remains to prove that $\bar P$ and $\bar Q$ are different when $z$ is large enough. Note that the difference between $\bar P$ and $\bar Q$ comes only from their boundary conditions "at infinity" (i.e. the boundary conditions of $P_{\L_n}$  and $ Q_{\L_n}$ when $n$ goes to infinity).

Let us start with a representation of  $P_{\L_n}$ and $Q_{\L_n}$ via the two type Widom-Rowlinson model on $\L_n$. Consider the following event of allowed configurations on $\C_{\L_n}^2$ 
\begin{equation}\label{allowed}
\A=\left\{ (\g^1, \g^2)\in\ \C_{\L_n}^2, \text{ s.t. }
 \begin{array}{l}
a)\; L_{R/2}(\g^1)\cap L_{R/2}(\g^2)=\emptyset\\
b)\; L_{R/2}(\g^1)\cap \L_n^c=\emptyset 
\end{array}\right\}
\end{equation}
which assumes first that the balls with radii $R/2$ centred at   $\g^1$ and $\g^2$ do not overlap and secondly that the balls centred at $\g^1$ are completely inside $\L_n$. 

 The two type Widom-Rowlinson model on $\L_n$ with boundary condition b) is the probability measure $\tilde P_{\L_n}$ on $\C_{\L_n}^2$ which is absolutely continuous with respect to the product $(\pi^z_{\L_n})^{\otimes 2}$ with density
$$ \frac{1}{\tilde Z_n} \1_\A(\g^1,\g^2) z^{N_{\L_n}(\g^1)} z^{N_{\L_n}(\g^2)}d\pi_{\L_n}(\g^1) d\pi_{\L_n}(\g^2),$$
where $\tilde Z_{\L_n}$ is a normalization factor.

\begin{lemma}\label{representaionWR}  
The  first marginal (respectively the second marginal) distribution of  $\tilde P_{\L_n}$ is  $P_{\L_n}$ (respectively $Q_{\L_n}$).
\end{lemma}
  
\begin{proof}

By definition of $\tilde P_{\L_n}$, its first marginal admits the following unnormalized density with respect to $\pi_{\L_n}(d\g^1)$

\begin{eqnarray*}
f(\g^1)&= &\int  \1_\A(\g^1,\g^2) z^{N_{\L_n}(\g^1)} z^{N_{\L_n}(\g^2)}d\pi_{\L_n}(\g^2)\\
&=&e^{(z-1)\l^d(\L_n)}z^{N_{\L_n}(\g^1)}\int  \1_\A(\g^1,\g^2) d\pi^z_{\L_n}(\g^2)\\
&=& e^{(z-1)\l^d(\L_n)}z^{N_{\L_n}(\g^1)}\1_{\left\{\g^1_{\L_n\backslash \L_n^\ominus}=\emptyset\right\}} e^{-z\text{Area}\big(\L_n\cap L_R(\g^1)\big)}
\end{eqnarray*}
which is proportional to the density of $P_{\L_n}$. A similar computation gives the same result for $Q_{\L_n}$.
 
\end{proof}

 Now let us give a representation of the two type Widom-Rowlinson model via the random cluster model. The random cluster process $R_{\L_n}$ is a point process on $\L_n$ distributed by
 
 $$ \frac{1}{\hat Z_n} z^{N_{\L_n}(\g)} 2^{N^{\L_n}_{cc}(\g)}d\pi_{\L_n}(\g),$$  
 where $N^{\L_n}_{cc}(\g)$ is the number of connected components of $L_{R/2}(\g)$ which are completely included in $\L_n$. Then we build two new point processes $\hat\G_{\L_n}^1$ and $\hat\G_{\L_n}^2$ by splitting randomly and uniformly the connected component of $R_{\L_n}$. Each connected component inside $\L_n$ is given to  $\hat\G_{\L_n}^1$  or  $\hat\G_{\L_n}^2$ with probability an half each. The connected components hitting $\L_n^c$ are given to $\hat\G_{\L_n}^2$. Rigorously this construction is done by the following way. Let us consider $(C_i(\g))_{1\le i\le N^{\L_n}_{cc}(\g)}$ the collection of connected components of $L_{R/2}(\g)$ inside $\L_n$. Let $(\epsilon_i)_{i\ge 1}$ be a sequence of independent Bernoulli random variables with parameter $1/2$. The processes $\hat\G_{\L_n}^1$ and $\hat\G_{\L_n}^2$ are defined by
 
 $$ \hat\G_{\L_n}^1=\bigcup_{1\le i\le N^{\L_n}_{cc}(R_{\L_n}),\; \epsilon_i=1} R_{\L_n}\cap C_i(R_{\L_n}) \qquad \text{ and } \qquad  \hat\G_{\L_n}^2=R_{\L_n} \backslash  \hat\G_{\L_n}^1.  $$

\begin{lemma}\label{representationCRCM}
 The distribution of $(\hat\G_{\L_n}^1,\hat\G_{\L_n}^2)$ is the two-type Widom-Rowlinson model with boundary condition b). In particular, $\hat\G_{\L_n}^1\sim P_{\L_n}$ and $\hat\G_{\L_n}^2\sim Q_{\L_n}$.
\end{lemma}

\begin{proof}
For any bounded measurable test function $f$ we have

\begin{eqnarray*}
& & E(f(\hat\G_{\L_n}^1,\hat\G_{\L_n}^2))\\
 & = & E\left[f\left(\bigcup_{1\le i\le N^{\L_n}_{cc}(R_{\L_n}),\; \epsilon_i=1} R_{\L_n}\cap C_i(R_{\L_n}), R_{\L_n}\cap\Big( \bigcup_{1\le i\le N^{\L_n}_{cc}(R_{\L_n}),\; \epsilon_i=1} C_i(R_{\L_n})\Big)^c \right)\right]\\
&= & \frac{1}{\hat Z_n} \int \sum_{(\epsilon_i)\in\{0,1\}^{N^{\L_n}_{cc}(\g)}} \frac{1}{2^{N^{\L_n}_{cc}(\g)}} \\
& & f\left(\bigcup_{1\le i\le N^{\L_n}_{cc}(\g),\; \epsilon_i=1}\g\cap C_i(\g), \g \cap \Big(\bigcup_{1\le i\le N^{\L_n}_{cc}(\g),\; \epsilon_i=1} C_i(\g)\Big)^c \right) z^{N_{\L_n}(\g)}2^{N^{\L_n}_{cc}(\g)} d\pi_{\L_n}(\g)\\
&= & \frac{1}{\hat Z_n} \int \sum_{(\epsilon_x)\in\{0,1\}^{\g}}  (\1_\A f)\left(\bigcup_{x\in\g,\; \epsilon_x=1} \{x\}, \g \backslash \bigcup_{x\in\g,\; \epsilon_x=1} \{x\} \right)  z^{N_{\L_n}(\g)}d\pi_{\L_n}(\g)\\
&= & \frac{1}{\hat Z_n} \int  (\1_\A f)\left(\bigcup_{(x,\epsilon _x)\in\tilde \g,\; \epsilon_x=1} \{x\}, \bigcup_{(x,\epsilon _x)\in\tilde \g,\; \epsilon_x=0} \{x\} \right) (2z)^{N_{\L_n}(\g)} d\tilde\pi_{\L_n}(\tilde\g)
\end{eqnarray*}
where  $\tilde\pi_{\L_n}$ is a marked Poisson point process on $\L_n\times\{0,1\}$. It means that the points are distributed by $\pi_{\L_n}$ and that each point $x$ is marked independently by a Bernoulli variable $\epsilon_x$ with parameter $1/2$. We obtain

\begin{eqnarray*}
E(f(\hat\G_{\L_n}^1,\hat\G_{\L_n}^2)) & = & \frac{e^{|\L_n|}}{\hat Z_n} \int  (\1_\A f)\left(\bigcup_{(x,\epsilon _x)\in\tilde \g,\; \epsilon_x=1} \{x\}, \bigcup_{(x,\epsilon _x)\in\tilde \g,\; \epsilon_x=0} \{x\} \right) z^{N_{\L_n}(\g)} d\tilde\pi_{\L_n}^{2}(\tilde\g)\\
& =& \frac{e^{|\L_n|}}{\hat Z_n} \int  \int (\1_\A f)\left(\g^1,\g^2\right) z^{N_{\L_n}(\g^1)}z^{N_{\L_n}(\g^2)} d\pi_{\L_n}(\g^1) d\pi_{\L_n}(\g^2),
\end{eqnarray*}
which proves the Lemma.
\end{proof}

Note that the random cluster process $R_{\L_n}$ is a finite volume GPP with energy function $\hat H=-N_{cc}^{\L_n}$, activity $z$ and inverse temperature $\log(2)$. Its local energy $\hat h$ is defined by 

$$ \hat h(x,\g)= N^{\L_n}_{cc}(\g)-N^{\L_n}_{cc}(\g\cup\{x\}).$$

Thanks to a geometrical argument, it is not difficult to note that $\hat h$ is uniformly bounded from above by a constant $c_d$ (depending only on the dimension $d$). For instance, in the case $d=2$, a ball with radius $R/2$ can overlap at most 5 disjoints balls with radius $R/2$ and therefore $c_2=5-1=4$ is suitable.
  
By Lemma \ref{LemmaGK}, we deduce that the distribution of  $R_{\L_n}$ dominates the Poisson point distribution $\pi_{\L_n}^{2ze^{-c_d}}$. So we choose
$$  z> \frac{z_de^{c_d}}{2R^d} $$

which implies that the Boolean model with intensity $2ze^{-c_d}$ and radii $R/2$ percolates with probability one (see Proposition \ref{PropoPerco}). For any $\g\in\C$, we denote by $C_{\infty}(\g)$ the unbounded connected components in $L_{R/2}(\g)$ (if it exists) and we define by $\alpha$ the intensity of points in $C_{\infty}(\g)$ under the distribution  $\pi^{2ze^{-c_d}}$; 

\begin{equation}\label{alpahpsitif}
\alpha:= \int N_{[0,1]^d}\Big(\g\cap C_\infty(\g)  \Big)d\pi^{2ze^{-c_d}}(\g)>0.
\end{equation}

We are now in position to finish the proof of Theorem \ref{TheoremeNonUniqueness} by proving that the difference in intensities between $\bar Q$ and $\bar P$ is larger than $\alpha$.

The local convergence topology $\tL$ ensures that, for any local bounded function $f$, the evaluation $P\mapsto \int fdP$ is continuous. Actually, the continuity of such evaluation holds for the larger class of functions $f$ satisfying: i) $f$ is local on some bounded set $\D$ ii) there exists $A>0$ such that $|f(\g)|\le A(1+\#(\g))$. In particular, the application $P\mapsto i(P):=\int N_{[0,1]^d}(\g)P(d\g)$ is continuous (see \cite{GZ} for details). We deduce that
  
\begin{eqnarray*}
i(\bar Q)-i(\bar P) & = & \int N_{[0,1]^d}(\g)d\bar Q(\g)-  \int N_{[0,1]^d}(\g)d\bar P(\g) \\
& = & \lim_{n\to\infty} \left( \int N_{[0,1]^d}(\g)d\bar Q_{\L_n}(\g)-  \int N_{[0,1]^d}(\g)d\bar P_{\L_n}(\g)\right)\\
& = & \lim_{n\to\infty} \frac{1}{\l^d(\L_n)} \int_{\L_n} \left( \int N_{[0,1]^d}(\tau_u\g)dQ_{\L_n}(\g)\right.\\
& & \qquad \qquad\qquad-  \left.\int N_{[0,1]^d}(\tau_u\g) dP_{\L_n}(\g)\right)du.\\
 \end{eqnarray*}
 
 By the representation of $P_{\L_n}$ and $Q_{\l_n}$ given in Lemma \ref{representationCRCM}, we find

\begin{eqnarray*}
i(\bar Q)-i(\bar P) & = & \lim_{n\to\infty} \frac{1}{\l^d(\L_n)} \int_{\L_n} E\left(  N_{[0,1]^d}(\tau_u\hat\G_{\L_n}^2)-N_{[0,1]^d}(\tau_u\hat\G_{\L_n}^1) \right)du\\
& = & \lim_{n\to\infty} \frac{1}{\l^d(\L_n)} \int_{\L_n} E\left(  N_{\tau_u[0,1]^d}(R_{\L_n}\cap C_{b}(R_{\L_n})) \right)du,\\
\end{eqnarray*}  
where $C_{b}(\g)$ are the connected components of $L_{R/2}(\g)$ hitting $\L_n^c$. Since the distribution of $R_{\L_n}$ dominates $\pi_{\L_n}^{2ze^{-c_d}}$,

\begin{eqnarray*}
i(\bar Q)-i(\bar P) & \ge  & \lim_{n\to\infty} \frac{1}{\l^d(\L_n)} \int_{[-n,n-1]^d} \int N_{\tau_u[0,1]^d}\Big(\g\cap C(\g)_\infty \Big)d\pi_{\L_n}^{2ze^{-c_d}}(\g)du,\\
 & \ge  & \lim_{n\to\infty} \frac{1}{\l^d(\L_n)} \int_{[-n,n-1]^d}\alpha du=\alpha>0.
\end{eqnarray*}  
  
 The theorem is proved. 
\end{proof}

\section{Estimation of parameters.}\label{SectionEstimation}

In this section we investigate the parametric estimation of the activity $z^*$ and the inverse temperature $\beta^*$ of an infinite volume Gibbs point process $\G$. As usual the star  specifies that the parameters  $z^*,\beta^*$ are unknown whereas the variable $z$ and $\beta$ are used for the optimization procedures. Here the dataset is the observation of $\G$ trough the bounded window $\L_n=[-n,n]^d$ (i.e. the process $\G_{\L_n}$). The asymptotic means that the window $\L_n$ increases to the whole space $\Rd$ (i.e. $n$ goes to infinity) without changing the realization of $\G$. 

For sake of simplicity, we decide to treat only the case of two parameters $(z,\beta)$ but it would be possible to consider energy functions depending on an extra parameter $\theta\in\R^p$.
The case where $H$ depends linearly on $\theta$ can be treated exactly as $z$ and $\beta$. For the non linear case the setting is much more complicated and each procedure has to be adapted. References are given in each section. 

In all the section, we assume that the energy function $H$ is stationary and has a finite range $R>0$. The existence of $\G$ is therefore guaranteed by Theorem \ref{THexistence}.
The procedures presented below are not affected by the uniqueness or non-uniqueness of the distribution of such GPP.
 
In Section \ref{SectionMLE}, we start by presenting the  natural maximum likelihood estimator. Afterwards, in Section \ref{SectionTF}, we introduce the general Takacs-Fiksel estimator which is a mean-square procedure based on the GNZ equations. The standard maximum pseudo-likelihood estimator is a particular case of such estimator and is presented in Section  \ref{SectionPMLE}. An application to an unobservable issue is treated in Section \ref{SectionEstimatioZArea}. The last Section \ref{SectionVariationalEstimator} is devoted to a new estimator based on a variational  GNZ equation.

\subsection{Maximum likelihood estimator}\label{SectionMLE}

The natural method to estimate the parameters is the likelihood inference. However a practical issue is that the likelihood depends on the intractable partition function. In the case of sparse data, approximations were first proposed in \cite{Ogata1984}, before simulation-based methods have been developed \cite{geyer-moller1994}. Here, we treat only the theoretical aspects of the MLE and these practical issues are not investigated.

\begin{definition}
The maximum likelihood estimator of $(z^*,\beta^*)$ is given for any $n\ge 1$ by

\begin{equation}\label{EstimatorML}
(\hat z_n,\hat \beta_n) = \text{argmax}_
{z>0,\beta\ge 0} \frac{1}{Z_{\L_n}^{z,\beta}} z^{N_{\L_n}(\G)} e^{-\beta H(\G_{\L_n})}.
\end{equation}
\end{definition}

Note that the argmax is not necessarily unique and that the boundary effects are not considered in this version of MLE. Other choices could be considered.

In this section we show the consistency of such estimators. The next natural question concerns the asymptotic distribution of the MLE but this problem is more arduous and is still partially unsolved today. Indeed, Mase \cite{Mase92} and  Jensen \cite{Jensen93}  proved that the MLE is asymptotically normal when the parameters $z$ and $\beta$ are small enough. Without these conditions, phase transition may occur  and some long-range dependence phenomenon can appear. The MLE might then exhibit a non standard asymptotic behavior, in the sense that the rate of convergence might differ from the standard square root of the size of the window and the limiting law might be non-gaussian. 

The next theorem is based on a preprint by Mase \cite{MasePreprint}. See alse \cite{DLAOS} for general results on consistency.

\begin{theorem} \label{TheoremconsistencyMLE}
 We assume that the energy function $H$ is stationary, finite range and not almost surely constant (i.e. there exists a subset $\L\subset \R^d$ such that $H(\g_\L)$ is not $\pi_\L(d\g_\L)$ almost surely constant). We assume also that the mean energy exists for any stationary probability measure $P$ (i.e. the limit \eqref{meanenergy} exists) and that the boundary effects assumption \eqref{boundary} holds. Moreover we assume that for any ergodic Gibbs measure $P$, the following limit holds for $P$-almost every $\g$

 \begin{equation}\label{assumptionergodic}
 \lim_{n\mapsto \infty} \frac{1}{\lambda^d(\L_n)}H(\g_{\L_n})=H(P).
 \end{equation}
  
Then, almost surely the parameters $(\hat z_n,\hat \beta_n)$ converge to $(z^*,\beta^*)$ when $n$ goes to infinity.
\end{theorem}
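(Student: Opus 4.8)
The plan is to work with the normalised log-likelihood and to exploit its concavity together with the variational principle (Theorem \ref{TheoremVP}). Reparametrising by $a=\log z$, I would introduce the contrast function
\begin{equation*}
U_n(a,\beta)=\frac{1}{|\L_n|}\Big(a\,N_{\L_n}(\G)-\beta H(\G_{\L_n})-\log Z_{\L_n}^{e^a,\beta}\Big),
\end{equation*}
so that $(\hat z_n,\hat\beta_n)=(e^{\hat a_n},\hat\beta_n)$ with $(\hat a_n,\hat\beta_n)=\text{argmax}\,U_n$ over $\R\times[0,+\infty)$. Since $\log Z_{\L_n}^{e^a,\beta}=\log\int e^{aN_{\L_n}-\beta H}d\pi_{\L_n}$ is the logarithm of a Laplace transform of the affine functional $(a,\beta)\mapsto aN_{\L_n}-\beta H$, it is convex in $(a,\beta)$; the remaining terms being affine, each $U_n$ is concave, and so is any pointwise limit. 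This concavity is the structural fact that will let me upgrade pointwise convergence into convergence of the maximisers.

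First I would identify the pointwise limit of $U_n$. By the definition of the pressure \eqref{pressure}, $\frac{1}{|\L_n|}\log Z_{\L_n}^{e^a,\beta}\to p^{e^a,\beta}=:p(a,\beta)$. Writing $P$ for the Gibbs law of $\G$ and assuming first that $P$ is ergodic, the spatial ergodic theorem gives $|\L_n|^{-1}N_{\L_n}(\G)\to i(P):=E_P(N_{[0,1]^d})$ almost surely, while the standing hypothesis \eqref{assumptionergodic} gives $|\L_n|^{-1}H(\G_{\L_n})\to H(P)$ almost surely. Hence, a.s. and for every fixed $(a,\beta)$,
\begin{equation*}
U_n(a,\beta)\longrightarrow U(a,\beta):=a\,i(P)-\beta H(P)-p(a,\beta).
\end{equation*}
The variational principle then pins down a maximiser: Theorem \ref{TheoremVP} asserts $\I_1(P)+\beta H(P)-\log(z)\,i(P)\ge -p^{z,\beta}$, with equality exactly when $P$ is Gibbs for $(z,\beta)$, which is precisely the statement that $U(a,\beta)\le \I_1(P)$ with equality iff $P$ is Gibbs for $(e^a,\beta)$. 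Since $P$ is Gibbs for $(z^*,\beta^*)$, the point $(a^*,\beta^*)=(\log z^*,\beta^*)$ maximises $U$.

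The heart of the argument is the uniqueness of this maximiser, i.e. identifiability. If $P$ were Gibbs for both $(e^a,\beta)$ and $(z^*,\beta^*)$, then applying the GNZ equations of Theorem \ref{PropositionGNZinfiniteVolume} to both parameter pairs and subtracting yields, for every positive measurable $f$,
\begin{equation*}
\int\int f(x,\g)\big(e^{a}e^{-\beta h(x,\g)}-z^*e^{-\beta^* h(x,\g)}\big)\,dx\,P(d\g)=0,
\end{equation*}
which forces $a-\log z^*=(\beta-\beta^*)\,h(x,\g)$ for $dx\otimes P$-almost every $(x,\g)$ with $h(x,\g)<+\infty$. The non-degeneracy hypothesis on $H$ is exactly what should guarantee that $h$ is not $dx\otimes P$-a.e. constant, so that this identity forces $\beta=\beta^*$ and then $a=\log z^*$; thus $(a^*,\beta^*)$ is the unique maximiser of $U$. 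This identifiability step, together with the passage from ``$H$ not almost surely constant'' to genuine non-degeneracy of the one-point local energy $h$, is the step I expect to be the most delicate.

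It then remains to convert the pointwise a.s. convergence $U_n\to U$ into convergence of the argmaxima. Because the $U_n$ are concave and converge pointwise to the finite concave function $U$ on the open set $\R\times(0,+\infty)$ (the boundary $\beta=0$ being handled by continuity), the convergence is automatically uniform on every compact subset. Fixing $\varepsilon>0$ and the closed ball $B=\bar B((a^*,\beta^*),\varepsilon)$, uniqueness of the maximiser gives $\sup_{\partial B}U<U(a^*,\beta^*)$; uniform convergence on $B$ then yields $\sup_{\partial B}U_n<U_n(a^*,\beta^*)$ for $n$ large, and a standard concavity argument (a concave function cannot attain its maximum outside a ball on whose boundary it lies strictly below an interior value) confines $(\hat a_n,\hat\beta_n)$ to $B$; as $\varepsilon$ is arbitrary, $(\hat z_n,\hat\beta_n)\to(z^*,\beta^*)$ a.s. Finally, if $P$ is not ergodic I would invoke its ergodic decomposition $P=\int P_\omega\,dP(\omega)$ into ergodic measures, each again Gibbs for $(z^*,\beta^*)$ (a stationary Gibbs measure decomposes into ergodic Gibbs measures); the limits $i(P_\omega)$ and $H(P_\omega)$ are then random, but the variational-principle and identifiability arguments apply verbatim to each $P_\omega$, so the limiting contrast is still uniquely maximised at $(a^*,\beta^*)$ and the conclusion persists almost surely.
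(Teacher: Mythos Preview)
Your proposal is correct and follows essentially the same strategy as the paper: reparametrise by $\log z$, observe concavity of the normalised log-likelihood, identify the pointwise limit via the pressure \eqref{pressure}, the ergodic theorem and the hypothesis \eqref{assumptionergodic}, invoke the variational principle (Theorem \ref{TheoremVP}) to locate the maximiser, prove identifiability of $(z^*,\beta^*)$, and conclude by a concavity/argmax argument. The reduction to the ergodic case via the ergodic decomposition of Gibbs measures is also the paper's opening move.

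The only noteworthy difference is the identifiability step. The paper argues directly via the DLR equations \eqref{DLRinfiniteVolume}: if $P$ were Gibbs for two parameter pairs, the conditional densities on a large box $\Delta$ would have to be proportional, i.e.\ $\log(z/z^*)N_\Delta=(\beta-\beta^*)H_\Delta+\text{const}$ for $\pi_\Delta$-a.e.\ inside configuration, which is ruled out by choosing $\Delta$ containing the set $\Lambda$ on which $H$ is not $\pi_\Lambda$-a.s.\ constant. This connects immediately to the stated hypothesis. Your route through the GNZ equations is equally valid but, as you correctly flag, requires the extra passage from ``$H$ not $\pi_\Lambda$-a.s.\ constant'' to ``$h(0,\cdot)$ not $P$-a.s.\ constant''; the paper's DLR argument sidesteps this. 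Conversely, your treatment of the argmax convergence (uniform convergence of concave functions on compacts plus the boundary-ball argument) is more explicit than the paper's, which simply appeals to strict concavity of the limit at the maximiser.
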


\begin{proof}

Let us assume that the Gibbs distribution $P$ of $\G$ is ergodic. Ortherwise $P$ can be represented as a mixture of ergodic stationary Gibbs measures (see \cite{Preston}, Theorem 2.2 and 4.1). Therefore the proof of the consistency of the MLE reduces to the case  when $P$ is ergodic, which is assumed henceforth.

Let us consider the log-likelihood contrast function

$$K_n(\theta,\beta)=-\log(Z_{\L_n}^{e^{-\theta},\beta}) -\theta N_{\L_n}(\G) -\beta H(\G_{\L_n})$$
related to the parametrization $\theta=-\log(z)$. It is clear that  $(\hat z_n,\hat\beta_n)=(e^{-\tilde \theta_n},\tilde \beta_n)$ where $(\tilde \theta_n,\tilde\beta_n)$ is the argmax of $(\theta,\beta)\mapsto K_n(\theta,\beta)$. So it is sufficient to show that  $(\tilde \theta_n,\tilde\beta_n)$ converges almost surely to $(-\log(z^*),\beta^*)$. The limit \eqref{pressure}, the ergodic Theorem and the assumption \eqref{assumptionergodic} imply the existence of the following limit contrast function

$$K(\theta,\beta):=-p^{e^{-\theta},\beta} -\theta E_P(N_{[0,1]^d}(\G)) -\beta H(P)=\lim_{n\to \infty} \frac{K_n(\theta,\beta)}{\lambda^d(\L_n)}. $$

The variational principle (Theorem \ref{TheoremVP}) ensures that $(\theta,\beta)\mapsto K(\theta,\beta)$ is lower than $I_1(P)$ with equality if and only if $P$ is a Gibbs measure with energy function $H$, activity $z$ and inverse temperature $\beta$. Since $H$ is not almost surely constant, it is easy to see that two Gibbs measures with different parameters $z,\beta$ are different (this fact can be viewed used the DLR equations in a very large box $\L$). Therefore $K(\theta,\beta)$ is maximal, equal to $I_1(P)$, if and only if $(\theta, \beta)=(\theta^*, \beta^*)$.

Therefore it remains to prove that the maximizers of $(\theta,\beta)\mapsto K_n(\theta,\beta)$ converge to the unique maximizer of $(\theta,\beta)\mapsto K(\theta,\beta)$.
First note that the functions $K_n$ are concave. Indeed, the Hessian of $K_n$ is negative since 

$$ \frac{\partial^2 K_n(\theta,\beta)}{\partial^2 \theta} =-\text{Var}_{P_{\L_n}^{e^{-\theta},\beta}}(N_{\L_n}),\quad \frac{\partial^2 K_n(\theta,\beta)}{\partial^2 \beta} =-\text{Var}_{P_{\L_n}^{e^{-\theta},\beta}}(H)$$
and
$$ \frac{\partial^2 K_n(\theta,\beta)}{\partial \theta\partial \beta} =-\text{Cov}_{P_{\L_n}^{e^{-\theta},\beta}}(N_{\L_n},H).$$

The convergence result for the argmax follows since the function $(\theta,\beta)\mapsto K(\theta,\beta)$ is necessarily strictly concave at $(\theta^*,\beta^*)$ because $K(\theta,\beta)$ is maximal uniquely at  $(\theta^*,\beta^*)$.

\end{proof}

Let us finish this section with a discussion on the extra assumption \eqref{assumptionergodic} which claims that the empirical mean energy converges to the expected value energy. This assumption is in general proved via the ergodic theorem or a law of large numbers. In the case of the Area energy function $H$ defined in \eqref{EnergyArea}, it is a direct consequence of a decomposition as in \eqref{decompositionArea} and the ergodic Theorem. In the case of pairwise interaction, the verification follows essentially the proof of Proposition \ref{PropositionVPpairwise}.

\subsection{Takacs-Fiksel estimator} \label{SectionTF}

In this section we present an estimator introduced in the eighties by Takacs and Fiksel \cite{Fiksel, Takacs}. It is based on the GNZ equations presented in Section \ref{SectionGNZinfinite volume}. Let us start by explaining briefly the procedure. Let $f$ be a test function from $\R^d\times\C$ to $\R$. We define the following quantity for any $z>0$, $\beta>0$ and $\g\in\C$ 

\begin{equation}\label{QuantityTF}
C_{\L_n}^{z,\beta}(f,\g)=\sum_{x\in\g_{\L_n}} f(x,\g\backslash \{x\})-z\int_{\L_n} e^{-\beta h(x,\g)}f(x,\g) dx.
\end{equation}

By the GNZ equation \eqref{GNZequtaions} we obtain 

$$E\Big(C^{z^*,\beta^*}_{\L_n}(f,\G)\Big)=0$$
where $\G$ is a GPP with parameter $z^*$ and $\beta^*$. Thanks to the ergodic Theorem it follows that for $n$ large enough

$$
 \frac{C^{z^*,\beta^*}_{\L_n}(f,\G)}{\lambda^d(\L_n)} \approx 0.$$

Then the Takacs-Fiksel estimator is defined as a mean-square method based on functions $C^{z^*,\beta^*}_{\L_n}(f_k,\G)$ for a collection of test functions $(f_k)_{1\le k \le K}$.

\begin{definition}\label{TakacsFikselEstimator}
Let $K\ge 2$ be an integer and $(f_k)_{1\le k \le K}$ a family of $K$ functions from  $\R^d\times\C$ to $\R$. The Takacs-Fiksel estimator $(\hat z_n,\hat \beta_n)$ of $(z^*,\beta^*)$ is defined by

$$ (\hat z_n,\hat \beta_n)=\text{argmin}_{(z,\beta)\in\DD} \sum_{k=1}^K \Big(C^{z,\beta}_{\L_n}(f_k,\G)\Big)^2,$$
\end{definition}
where $\DD\subset(0,+\infty)\times[0,+\infty)$ is a bounded domain containing $(z^*,\beta^*)$.

In opposition to the MLE procedure, the contrast function does not depend on the partition function. This estimator is explicit  except for the computation of integrals and the optimization procedure. In \cite{CDDL} the Takacs-Fiksel procedure is presented in a more general setting including the case where the functions $f_k$ depend on parameters $z$ and $\beta$. This generalization may lead to a simpler procedure in choosing $f_k$ such that the integral term in \eqref{QuantityTF} is explicitly computable.

In the rest of the section, we prove the consistency of the estimator. General results on consistency and asymptotic normality are developed in \cite{CDDL}.

\begin{theorem}[Consistency] \label{TheoremConsTKest}
We make the following integrability assumption: for any $1\le k \le K$ 
\begin{equation}\label{AssumptintTFC}
E\left( |f_k(0,\G)|(1+|h(0,\G)|) \sup_{(z,\beta)\in\DD}e^{-\beta h(0,\G)}\right)<+\infty.
\end{equation}

We assume also the following identifiability condition: the equality  
\begin{equation}\label{assumptionIdenti}
\sum_{k=1}^K E\Big(f_k(0,\G)\big(ze^{-\beta h(0,\G)}-z^*e^{-\beta^* h(0,\G)}\big)\Big)^2=0
\end{equation}
holds if and only $(z,\beta)=(z^*,\beta^*)$.
Then the Takacs-Fiksel estimator $(\hat z_n,\hat \beta_n)$ presented in Definition \ref{TakacsFikselEstimator} converges almost surely to $(z^*,\beta^*)$.

\end{theorem}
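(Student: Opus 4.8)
The plan is to prove the almost-sure convergence of the Takacs-Fiksel estimator by the standard contrast-function machinery: show that the normalized contrast converges almost surely and uniformly to a limit contrast which is uniquely minimized at $(z^*,\beta^*)$, and then transfer the minimizer convergence. As in the proof of Theorem \ref{TheoremconsistencyMLE}, I would first reduce to the case where the underlying Gibbs measure $P$ is ergodic, since a general $P$ decomposes as a mixture of ergodic Gibbs measures and the estimator is built from the single realization of $\G$.

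\begin{proof}[Proof sketch]
As in Theorem \ref{TheoremconsistencyMLE}, we may assume that the distribution $P$ of $\G$ is ergodic. The idea is to study the normalized contrast
$$
K_n(z,\beta):=\frac{1}{\lambda^d(\L_n)}\sum_{k=1}^K \Big(C^{z,\beta}_{\L_n}(f_k,\G)\Big)^2,
$$
and to identify its almost-sure limit. First I would treat each term $C^{z,\beta}_{\L_n}(f_k,\G)/\lambda^d(\L_n)$ separately. Its two pieces, namely $\lambda^d(\L_n)^{-1}\sum_{x\in\g_{\L_n}} f_k(x,\g\backslash\{x\})$ and $z\,\lambda^d(\L_n)^{-1}\int_{\L_n} e^{-\beta h(x,\g)}f_k(x,\g)\,dx$, are both additive functionals of the stationary ergodic field $\G$, so the multidimensional ergodic theorem applies. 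Using stationarity to recenter each contribution at the origin, the first piece converges to $E\big(f_k(0,\G)\big)$-type quantities expressed through the Palm/Campbell formalism, and the second converges to $z\,E\big(e^{-\beta h(0,\G)}f_k(0,\G)\big)$. The integrability assumption \eqref{AssumptintTFC} is exactly what guarantees these limiting expectations are finite and that the ergodic theorem is applicable uniformly enough in $(z,\beta)$ over the bounded domain $\DD$. By the GNZ equations \eqref{GNZequtaions}, the limit of $C^{z,\beta}_{\L_n}(f_k,\G)/\lambda^d(\L_n)$ equals
$$
E\Big(f_k(0,\G)\big(z e^{-\beta h(0,\G)}-z^* e^{-\beta^* h(0,\G)}\big)\Big),
$$
since at the true parameters the whole expression vanishes. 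Squaring and summing over $k$, the limit contrast function is precisely
$$
K(z,\beta)=\sum_{k=1}^K E\Big(f_k(0,\G)\big(z e^{-\beta h(0,\G)}-z^* e^{-\beta^* h(0,\G)}\big)\Big)^2.
$$

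The identifiability condition \eqref{assumptionIdenti} states exactly that $K(z,\beta)=0$ if and only if $(z,\beta)=(z^*,\beta^*)$, so the limit contrast has a unique minimizer at the true parameters. To conclude that the argmin $(\hat z_n,\hat\beta_n)$ converges to $(z^*,\beta^*)$, I would upgrade the pointwise convergence $K_n\to K$ to uniform convergence on the compact domain $\DD$. This is the standard final step for $M$-estimators: uniform convergence of the contrast plus uniqueness of the limiting minimizer forces the minimizers to converge. The uniformity can be obtained by controlling the modulus of continuity of $K_n$ in $(z,\beta)$; since the dependence on $z$ is affine and the dependence on $\beta$ enters only through $e^{-\beta h(x,\g)}$, the integrability bound \eqref{AssumptintTFC} provides an ergodic-theorem-controlled Lipschitz-type estimate on $\DD$, giving equicontinuity and hence uniform convergence.

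The main obstacle is precisely this passage from almost-sure pointwise convergence of $K_n(z,\beta)$ (for each fixed $(z,\beta)$, outside a null set that a priori depends on the parameters) to almost-sure uniform convergence over the whole domain $\DD$ on a single null set. The resolution is to use the affine-in-$z$ and smooth-in-$\beta$ structure together with the uniform domination furnished by \eqref{AssumptintTFC}: one establishes that the family $\{K_n\}$ is almost surely equicontinuous on the compact set $\DD$, so that pointwise convergence on a countable dense set extends to uniform convergence. Once uniform convergence and the unique-minimizer property are in hand, the convergence of $(\hat z_n,\hat\beta_n)$ to $(z^*,\beta^*)$ is immediate.
\end{proof}
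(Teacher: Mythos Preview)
Your proposal is correct and follows essentially the same route as the paper: reduce to the ergodic case, use the ergodic theorem and the GNZ equation to identify the pointwise limit $K(z,\beta)$ of the normalized contrast, invoke the identifiability assumption for uniqueness of the minimizer, and then pass from pointwise to argmin convergence via a modulus-of-continuity/equicontinuity control supplied by \eqref{AssumptintTFC}. The only cosmetic differences are a harmless sign flip in your expression for the limit of $C^{z,\beta}_{\L_n}(f_k,\G)/\lambda^d(\L_n)$ (irrelevant after squaring) and that the paper packages the final step by citing a ready-made lemma (Theorem 3.4.3 in \cite{Guyon}) rather than phrasing it as equicontinuity plus uniform convergence.
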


\begin{proof}

As in the proof of Theorem \ref{TheoremconsistencyMLE}, without loss of generality, we assume that the Gibbs distribution of $\G$ is ergodic. Therefore, thanks to the ergodic Theorem, almost surely for any $1\le k \le K$

\begin{equation}\label{formule1}
\lim_{n\mapsto \infty} \frac{C^{z,\beta}_{\L_n}(f_k,\G)}{\lambda^d(\L_n)} = E\left[\sum_{x\in\G_{[0,1]^d}} f_k(x,\G\backslash x)\right]-zE\left[\int_{[0,1]^d} e^{-\beta h(x,\G)}f_k(x,\G)dx\right].
\end{equation}
By the GNZ equation \eqref{GNZequtaions}

\begin{equation}\label{formule2}
E\left[\sum_{x\in\G_{[0,1]^d}} f_k(x,\G\backslash x)\right] = z^*E\left[\int_{[0,1]^d} e^{-\beta^* h(x,\G)}f_k(x,\G)dx\right].
\end{equation}

Using the stationarity and compiling \eqref{formule1} and \eqref{formule2}, we obtain that the contrast function 
$$ K_n(z,\beta)= \sum_{k=1}^K \left(\frac{C^{z,\beta}_{\L_n}(f_k,\G)}{\lambda^d(\L_n)}\right)^2$$
admits almost surely the limit 
$$\lim_{n\mapsto \infty} K_n(z,\beta) =K(z,\beta):= \sum_{k=1}^K E\Big(f_k(0,\G)\big(ze^{-\beta h(0,\G)}-z^*e^{-\beta^* h(0,\G)}\big)\Big)^2,$$
which is null if and only if  $(z,\beta)=(z^*,\beta^*)$. Therefore it remains to prove that the minimizers of the contrast function converge to the minimizer of the limit contrast function. In the previous section we solved a similar issue for the MLE procedure using the convexity of contrast functions. This argument does not work here and we need more sophisticated tools. 

We define by $W_n(.)$ the modulus of continuity of the contrast function $K_n$; let $\eta$ be a positive real

$$ W_n(\eta)=\sup \Big\{|K_n(z,\beta)-K_n(z',\beta')|, \text{with } (z,\beta),(z',\beta')\in \DD,\; \Vert(z-z',\beta-\beta')\Vert \le \eta\Big\}.$$

\begin{lemma}[Theorem 3.4.3 \cite{Guyon}]
Assuming that there exists a sequence $(\epsilon_l)_{l\ge 1}$, which goes to zero when $l$ goes to infinity, such that for any $l\ge 1$

\begin{equation}\label{conditionGuyon}
P \left( \limsup_{n\mapsto +\infty} \left\{W_n\left(\frac{1}{l}\right)\ge \epsilon_l\right\} \right)=0
\end{equation}
then almost surely the minimizers of $(z,\beta)\mapsto K_n(z,\beta)$ converges to the minimizer of $(z,\beta)\mapsto K(z,\beta)$.

\end{lemma}

Let us show that the assertion \eqref{conditionGuyon} holds. Thanks to equalities \eqref{formule1}, \eqref{formule2} and  assumption \eqref{AssumptintTFC}, there exists a constant $C_1$ such that for $n$ large enough, any $1\le k \le K$ and any $(z,\beta)\in\DD$

\begin{equation}\label{dominationC}
  \frac{|C^{z,\beta}_{\L_n}(f_k,\G)|}{\lambda^d(\L_n)} \le C_1.
\end{equation}
We deduce that for $n$ large enough

\begin{eqnarray*}
|K_n(z,\beta)-K_n(z',\beta')|& \le & \frac{C_1}{\lambda^d(\L_n)} \sum_{k=1}^K \int_{\L_n} |f_k(x,\G)|\left|ze^{-\beta h(x,\G)}-z'e^{-\beta' h(x,\G)}\right| dx\\
& \le & \frac{C_1|\beta-\beta'|}{\lambda^d(\L_n)} \max_{1\le k\le K}\int_{\L_n} |f_k(x,\G)h(x,\G)|\sup_{(z,\beta'')\in\DD} ze^{-\beta'' h(x,\G)}dx\\
& & +\frac{C_1|z-z'|}{\lambda^d(\L_n)} \max_{1\le k\le K}\int_{\L_n} |f_k(x,\G)|\sup_{(z,\beta'')\in\DD} e^{-\beta'' h(x,\G)}dx.\\
\end{eqnarray*}

By the ergodic Theorem, the following convergences hold almost surely

\begin{eqnarray*}
& & \lim_{n\mapsto +\infty } \frac{1}{\lambda^d(\L_n)} \int_{\L_n} |f_k(x,\G)h(x,\G)|\sup_{(z,\beta'')\in\DD} ze^{-\beta'' h(x,\G)}dx\\
&=& E\left(|f_k(0,\G)h(0,\G)|\sup_{(z,\beta'')\in\DD} ze^{-\beta'' h(0,\G)}\right)<+\infty, 
\end{eqnarray*}
and 

\begin{eqnarray*}
& & \lim_{n\mapsto +\infty } \frac{1}{\lambda^d(\L_n)} \int_{\L_n} |f_k(x,\G)|\sup_{(z,\beta'')\in\DD} e^{-\beta'' h(x,\G)}dx\\
&=& E\left(|f_k(0,\G)|\sup_{(z,\beta'')\in\DD} e^{-\beta'' h(0,\G)}\right)<+\infty.
\end{eqnarray*}
This implies the existence of a constant $C_2>0$ such that for $n$ large enough, any $1\le k \le K$ and any $(z,\beta)\in\DD$

$$ |K_n(z,\beta)-K_n(z',\beta')| < C_2\Vert (z-z',\beta-\beta')\Vert.$$

The assumption \eqref{conditionGuyon} occurs with the sequence $\epsilon_l=C_2/l$ and Theorem \ref{TheoremConsTKest} is proved.

\end{proof}

\begin{remark}[On the integrability assumption]
The integrability assumption \eqref{AssumptintTFC} is sometimes difficult to check, especially when the local energy $h(0,\g)$ is not bounded from below. For instance in the setting of pairwise energy function $H$ defined in $\eqref{energypairwise}$ with a pair potential $\varphi$ having negative values,  Ruelle estimates \eqref{RuelleEstimatesinfinitevolume} are very useful. Indeed, by stability of the energy function, the potential $\varphi$ is necessary bounded from below by $2A$ and therefore 
$$ E\left(e^{-\beta h(0,\G)}\right)<E\left(e^{-2A\beta N_{B(0,R)}(\G))}\right)<+\infty,$$ where $R$ is the range of the interaction. 
\end{remark}

\begin{remark}[On the identifiability assumption] In the identifiability assumption \eqref{assumptionIdenti}, the sum is null if and only if each term is null. Assuming that the functions are regular enough, each term is null as soon as  $(z,\beta)$ belongs to a 1-dimensional manifold embedded in $\R^2$ containing $(z^*,\beta^*)$. Therefore, assumption \eqref{assumptionIdenti} claims that $(z^*,\beta^*)$ is the unique element of these $K$ manifolds. If $K\le 2$, there is no special geometric argument to ensure that $K$  1-dimensional manifolds in $\R^2$ have an unique intersection point. For this reason, it is recommended to choose $K\ge 3$. See Section 5 in \cite{CDDL} for more details and complements on this identifiability assumption.
\end{remark}

\subsection{Maximum pseudo-likelihood estimator}\label{SectionPMLE}

In this section we present the maximum pseudo-likelihood estimator, which is a particular case of the Takacs-Fiksel estimator. This procedure has been first introduced by Besag in \cite{Besag74}  and popularized by Jensen and Moller in \cite{JensenMoller} and Baddeley and Turner in \cite{BaddeleyTurner}.

\begin{definition} \label{MPLEestimator} The  maximum pseudo-likelihood estimator $(\hat z_n,\hat \beta_n)$ is defined as a Takacs-Fiksel estimator (see Definition \ref{TakacsFikselEstimator}) with $K=2$, $f_1(x,\g)=1$ and $f_2(x,\g)=h(x,\g)$.
\end{definition}

This particular choice of functions $f_1$, $f_2$ simplifies the identifiability assumption \eqref{assumptionIdenti}. The following theorem is an adaptation of Theorem \ref{TheoremConsTKest} in the present setting of MPLE. The asymptotic normality is investigated first in \cite{JensenKunsch} (see also \cite{BCD2008} for more general results).

\begin{theorem}[Consistency] \label{TheoremConsMPLE}
Assuming
\begin{equation}\label{AssumptintMPLE}
E\left( (1+h(0,\G)^2) \sup_{(z,\beta)\in\DD}e^{-\beta h(0,\G)}\right)<+\infty
\end{equation}
and 
\begin{equation}\label{assumptionIdentiMPLE}
P\Big(h(0,\G)=h(0,\emptyset)\Big)<1,
\end{equation}
then the maximum pseudo-likelihood estimator $(\hat z_n,\hat \beta_n)$ converges almost surely to $(z^*,\beta^*)$.

\end{theorem}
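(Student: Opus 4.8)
The plan is to recognize the MPLE as the Takacs--Fiksel estimator with $K=2$, $f_1\equiv 1$ and $f_2=h$ (exactly Definition~\ref{MPLEestimator}), and then merely verify that the two hypotheses of Theorem~\ref{TheoremConsTKest} --- the integrability assumption \eqref{AssumptintTFC} and the identifiability assumption \eqref{assumptionIdenti} --- follow from the present assumptions \eqref{AssumptintMPLE} and \eqref{assumptionIdentiMPLE}. Once these two verifications are carried out, the almost sure convergence of $(\hat z_n,\hat\beta_n)$ to $(z^*,\beta^*)$ is an immediate consequence of Theorem~\ref{TheoremConsTKest}, with no further work needed.

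First I would check the integrability. For $f_1\equiv 1$ the integrand in \eqref{AssumptintTFC} is $(1+|h(0,\G)|)\sup_{(z,\beta)\in\DD}e^{-\beta h(0,\G)}$, and for $f_2=h$ it is $|h(0,\G)|(1+|h(0,\G)|)\sup_{(z,\beta)\in\DD}e^{-\beta h(0,\G)}$. Using the elementary bounds $1+|h|\le 2(1+h^2)$ and $|h|(1+|h|)=|h|+h^2\le 2(1+h^2)$, both integrands are dominated by a constant multiple of $(1+h(0,\G)^2)\sup_{(z,\beta)\in\DD}e^{-\beta h(0,\G)}$, whose expectation is finite by \eqref{AssumptintMPLE}. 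Hence \eqref{AssumptintTFC} holds for $k=1,2$.

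The main work is the identifiability. With these test functions the limit contrast in \eqref{assumptionIdenti} reads
\[
E\big(ze^{-\beta h(0,\G)}-z^*e^{-\beta^* h(0,\G)}\big)^2 + E\big(h(0,\G)(ze^{-\beta h(0,\G)}-z^*e^{-\beta^* h(0,\G)})\big)^2.
\]
Since both terms are non-negative, the sum vanishes if and only if each does; in particular the first term forces $ze^{-\beta h(0,\G)}=z^*e^{-\beta^* h(0,\G)}$ for $P$-almost every $\G$. Taking logarithms yields the affine identity $\log(z/z^*)=(\beta-\beta^*)\,h(0,\G)$ $P$-a.s. The crux is then to rule out $\beta\ne\beta^*$: in that case $h(0,\G)$ would be $P$-almost surely equal to the constant $\log(z/z^*)/(\beta-\beta^*)$. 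Here I would invoke the fact that, under a Gibbs measure, the event $\{\G_{B(0,R)}=\emptyset\}$ has positive probability (by the DLR equation \eqref{DLRinfiniteVolume} together with $\pi_\D(\{\emptyset\})>0$), and that by finite range $h(0,\G)=h(0,\G_{B(0,R)})$, so on this event $h(0,\G)=h(0,\emptyset)$. Consequently $P(h(0,\G)=h(0,\emptyset))>0$, which pins the constant to $h(0,\emptyset)$ and forces $P(h(0,\G)=h(0,\emptyset))=1$, contradicting \eqref{assumptionIdentiMPLE}. Thus $\beta=\beta^*$, and the affine identity then gives $z=z^*$; the converse implication being trivial, \eqref{assumptionIdenti} is equivalent to \eqref{assumptionIdentiMPLE}.

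The only genuinely delicate point is this last identifiability step --- specifically the observation that $h(0,\G)$ cannot be almost surely equal to a constant other than $h(0,\emptyset)$, which rests on the positivity of $P(\G_{B(0,R)}=\emptyset)$ coming from the Gibbs/DLR structure. Everything else is a routine specialization of Theorem~\ref{TheoremConsTKest} to $K=2$, $f_1\equiv 1$, $f_2=h$.
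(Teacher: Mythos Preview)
Your overall strategy --- reduce to Theorem~\ref{TheoremConsTKest} by checking \eqref{AssumptintTFC} and \eqref{assumptionIdenti} for $f_1=1$, $f_2=h$ --- is exactly right, and your integrability check is fine. But the identifiability step contains a genuine error. In \eqref{assumptionIdenti} each summand is the \emph{square of an expectation}, not the expectation of a square: from the proof of Theorem~\ref{TheoremConsTKest} one has
\[
K(z,\beta)=\sum_{k=1}^K\Big[E\big(f_k(0,\G)(ze^{-\beta h(0,\G)}-z^*e^{-\beta^* h(0,\G)})\big)\Big]^2,
\]
so the vanishing of the $k=1$ term only gives $E\big(ze^{-\beta h(0,\G)}-z^*e^{-\beta^* h(0,\G)}\big)=0$, not the almost-sure equality you claim. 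Your subsequent logarithm-and-affine-identity argument therefore has no foundation: you only have two moment equations, not a pointwise identity.

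The paper closes this gap by a convexity trick that you are missing. With $z=e^{-\theta}$ it introduces
\[
\psi(\theta,\beta)=E\big(e^{-\theta^*-\beta^*h(0,\G)}(e^U-U-1)\big),\qquad U=(\beta^*-\beta)h(0,\G)+(\theta^*-\theta),
\]
which is convex, non-negative, and (using \eqref{assumptionIdentiMPLE}) vanishes only at $(\theta^*,\beta^*)$. Hence its gradient vanishes only there. A direct computation shows $\partial_\theta\psi$ and $\partial_\beta\psi$ are precisely the two expectations $E(z^*e^{-\beta^*h}-ze^{-\beta h})$ and $E\big(h(z^*e^{-\beta^*h}-ze^{-\beta h})\big)$, which yields \eqref{assumptionIdenti}. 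Incidentally, your nice observation that $P(\G_{B(0,R)}=\emptyset)>0$ forces any almost-sure constant value of $h(0,\G)$ to equal $h(0,\emptyset)$ is exactly what is needed to see that $\psi(\theta,\beta)=0$ (i.e.\ $U=0$ a.s.) implies $(\theta,\beta)=(\theta^*,\beta^*)$ under \eqref{assumptionIdentiMPLE}; that part of your argument survives and complements the paper's somewhat terse justification.
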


\begin{proof}
Let us check the assumptions of Theorem \ref{TheoremConsTKest}. Clearly, the integrability assumption \eqref{AssumptintMPLE} ensures  the integrability assumptions \eqref{AssumptintTFC} with $f_1=1$ and $f_2=h$. So it remains to show that assumption \eqref{assumptionIdentiMPLE} implies the identifiability assumption \eqref{assumptionIdenti}. Consider the parametrization $z=e^{-\theta}$ and $\psi$ the function

$$ \psi(\theta,\beta)=E\left(e^{-\theta^*-\beta^*h(0,\G)}(e^U-U-1)\right),$$
with 
$$ U=\beta^*h(0,\G)+\theta^*-\beta h(0,\G)-\theta.$$
The function $\psi$ is convex, non negative and equal to zero if and only if $U$ is almost surely equal to zero. By assumption \eqref{assumptionIdentiMPLE} this fact occurs when $(z,\beta)=(z^*,\beta^*)$. Therefore the gradient $\nabla \psi=0$ if and only $(z,\beta)=(z^*,\beta^*)$. Noting that

$$\frac{\partial \psi(\theta,\beta)}{\partial \theta}=E\left(z^*e^{-\beta^*h(0,\G)}-ze^{-\beta h(0,\G)}\right)$$
and

$$\frac{\partial \psi(\theta,\beta)}{\partial \beta}=E\left(h(0,\G)\left(z^*e^{-\beta^*h(0,\G)}-ze^{-\beta h(0,\G)}\right)\right),$$

the identification assumption \eqref{assumptionIdenti} holds. The theorem is proved.

\end{proof}

\subsection{Solving an unobservable issue}\label{SectionEstimatioZArea}

In this section we give an application of the Takacs-Fiksel procedure in a setting of partially observable dataset. Let us consider a Gibbs point process $\G$ for which we observe only $L_R(\G)$ in place of $\G$. This setting appears when Gibbs point processes  are used for producing random surfaces via  germ-grain structures (see  \cite{MollerHelisova} for instance). Applications for modelling micro-structure in materials or micro-emulsion in statistical physics are developed in \cite{chiu2013}.  

The goal is to furnish an estimator of $z^*$ and $\beta^*$ in spite of this unobservable issue.
Note that the number of points (or balls) is not observable from $L_R(\G)$ and therefore the MLE procedure is not achievable, since the likelihood is not computable.    When $\beta$ is known and fixed to zero, it corresponds to the estimation of the intensity of the Boolean model from its germ-grain structure (see \cite{Molchanov} for instance).

In the following we assume that $\G$ a Gibbs point process for  the Area energy function defined in \eqref{EnergyArea}, the activity $z^*$ and the inverse temperature $\beta^*$. This choice is natural since the energy function depends on the observations $L_R(\G)$. The more general setting of Quermass interaction is presented in \cite{DLS} but for sake of simplicity, we treat only here the simpler case of Area interaction.

We opt for a Takacs-Fiksel estimator but the main problem is that the function

$$
C_{\L_n}^{z,\beta}(f,\g)=\sum_{x\in\g_{\L_n}} f(x,\g\backslash \{x\})-z\int_{\L_n} e^{-\beta h(x,\g)}f(x,\g) dx,$$
which appears in the procedure, is not computable since the positions of points are not observable. The main idea is to choose the function $f$ properly such that the sum is observable although each term of the sum is not. To this end, we define 

$$  f_1(x,\g)=\text{Surface}\Big(\partial B(x,R)\cap L^c_R(\g)\Big)$$
and 
$$ f_2(x,\g)=\1_{\{B(x,R)\cap L_R(\g)=\emptyset\}},$$
where $\partial B(x,R)$ is the boundary of the ball $B(x,R)$ (i.e. the sphere $S(x,R)$) and the "Surface" means the $(d-1)$-dimensional Hausdorff measure in $\Rd$. Clearly the function $f_1$ gives the surface of the portion of the sphere $S(x,R)$ outside the germ-grain structure $L_R(\g)$. The function $f_2$ indicates if the ball $B(x,R)$ hits the germ-grain structure $L_R(\g)$. Therefore we obtain that

$$\sum_{x\in\g_{\L_n}} f_1(x,\g\backslash \{x\})=\text{Surface}\Big(\partial L_R(\g_{\L_n})\Big)$$
and
$$\sum_{x\in\g_{\L_n}} f_2(x,\g\backslash \{x\})=N_{\text{iso}}\Big(L_R(\g_{\L_n})\Big),$$
where $N_{\text{iso}}(L_R(\g_{\L_n}))$ is  the number of isolated balls in the germ-grain structure $L_R(\g_{\L_n})$. Let us note that these quantities are not exactly observable since, in practice, we observe $L_R(\g)\cap \L_n$  rather than $L_R(\g_{\L_n})$. However, if we omit this boundary effect, the values $C_{\L_n}^{z,\beta}(f_1,\G)$ and $C_{\L_n}^{z,\beta}(f_2,\G)$ are observable and the Takacs-Fiksel   procedure  is achievable. The consistency of the estimator is guaranteed by Theorem \ref{TheoremConsTKest}. The integrability assumption \eqref{AssumptintTFC} is trivially satisfied since the functions $f_1$, $f_2$ and $h$ are uniformly bounded. The verification of the identifiability assumption \eqref{assumptionIdenti} is more delicate and we refer to \cite{CDDL}, example 2 for a proof. Numerical estimations on simulated and real datasets can be found in \cite{DLS}.
 
\subsection{A variational estimator}\label{SectionVariationalEstimator}
 
In this last section, we present a new estimator based  on a variational GNZ equation which is a mix between the standard GNZ equation and an integration by parts formula. This equation has been first introduced in \cite{DereudrePHD} for statistical mechanics issues and used recently in \cite{BaddeleyDereudre} for spatial statistic considerations. In the following, we present first this variational equation and afterwards we introduce its associated estimator of $\beta^*$. The estimation of $z^*$ is not considered here. 

\begin{theorem} Let $\G$ be a GPP for the energy function $H$, the activity $z$ and the inverse temperature $\beta$. We assume that, for any $\g\in\C$, the function $x\mapsto h(x,\g)$ is differentiable on $\R^d\backslash \g$. Let $f$ be a function from $\R^d\times \C$ to $\R$  which is  differentiable and with compact support with respect to the first variable. Moreover we assume the integrability of both terms below. Then

\begin{equation}\label{VaritionalGNZ}
E\left(\sum_{x\in\G} \nabla_xf(x,\G\backslash \{x\})\right)= \beta E\left(\sum_{x\in\G} f(x,\G\backslash \{x\})\nabla_xh(x,\G\backslash \{x\})\right).
\end{equation}
\end{theorem}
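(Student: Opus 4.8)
The plan is to read the identity \eqref{VaritionalGNZ} as the standard GNZ equation \eqref{GNZequtaions} combined with a spatial integration by parts, exploiting that $f(\cdot,\g)$ has compact support. Since both sides are vector-valued (a gradient in the first variable), I would argue componentwise and apply the scalar GNZ equation coordinate by coordinate; this is legitimate for test functions without a constant sign as soon as the relevant integrability holds, which is assumed in the statement.

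First I would apply \eqref{GNZequtaions} to the test function $g(x,\g)=\nabla_x f(x,\g)$. Reading the equation from the sum towards the integral gives
$$E\left(\sum_{x\in\G} \nabla_xf(x,\G\backslash \{x\})\right)= z\, E\left(\int_{\Rd} \nabla_x f(x,\G)\, e^{-\beta h(x,\G)}\, dx\right).$$
Next, for a fixed configuration $\g$, I would integrate by parts in $x$ over $\Rd$. Because $f(\cdot,\g)$ has compact support there are no boundary contributions, and since $x\mapsto h(x,\g)$ is differentiable on $\Rd\backslash\g$ (whose complement has Lebesgue measure zero) the chain rule gives $\nabla_x e^{-\beta h(x,\g)}=-\beta\, \nabla_x h(x,\g)\, e^{-\beta h(x,\g)}$, whence
$$\int_{\Rd} \nabla_x f(x,\g)\, e^{-\beta h(x,\g)}\, dx = \beta\int_{\Rd} f(x,\g)\, \nabla_x h(x,\g)\, e^{-\beta h(x,\g)}\, dx.$$
Substituting this pointwise identity with $\g=\G$ back under the expectation of the first display turns its right-hand side into $z\beta\, E\bigl(\int_{\Rd} f(x,\G)\, \nabla_x h(x,\G)\, e^{-\beta h(x,\G)}\, dx\bigr)$.

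Finally I would use the GNZ equation once more, this time applied to $F(x,\g)=\beta\, f(x,\g)\, \nabla_x h(x,\g)$ and read from the integral towards the sum:
$$z\, E\left(\int_{\Rd} \beta\, f(x,\G)\, \nabla_x h(x,\G)\, e^{-\beta h(x,\G)}\, dx\right)= \beta\, E\left(\sum_{x\in\G} f(x,\G\backslash \{x\})\, \nabla_x h(x,\G\backslash \{x\})\right).$$
Chaining the three displays yields \eqref{VaritionalGNZ}. The main obstacle is the rigorous justification of the integration by parts: I must ensure that $x\mapsto h(x,\g)$ is regular enough (absolutely continuous along almost every line, not merely differentiable off $\g$) for the fundamental theorem of calculus to apply despite the singularities at the points of $\g$, and I must verify that all the products occurring are integrable so that Fubini's theorem and the two applications of GNZ are valid. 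These integrability requirements are exactly the hypotheses assumed in the statement, so the whole argument reduces to checking the differentiability and boundary-term conditions for the particular $f$ and $H$ at hand.
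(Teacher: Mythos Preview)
Your proof is correct and follows exactly the same three-step strategy as the paper: apply GNZ to $\nabla_x f$, integrate by parts in $x$ using the compact support of $f(\cdot,\g)$, then apply GNZ in reverse to $f\,\nabla_x h$. Your discussion of the regularity and integrability caveats is in fact more careful than the paper's, which simply writes ``by a standard integration by part formula'' and ``using again the GNZ equation'' without further comment.
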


\begin{proof}

By the standard GNZ equation \eqref{GNZequtaions} applied to the function $\nabla_xf$, we obtain

$$ E\left(\sum_{x\in\G} \nabla_xf(x,\G\backslash \{x\})\right)= z E\left(\int_{\R^d} e^{-\beta h(x,\G)} \nabla_xf(x,\G)dx\right).$$

By a standard integration by part formula with respect to the first variable $x$, we find that

$$ E\left(\sum_{x\in\G} \nabla_xf(x,\G\backslash \{x\})\right)= z\beta E\left(\int_{\R^d} \nabla_xh(x,\G)e^{-\beta h(x,\G)} f(x,\G)dx\right).$$

Using again the GNZ equation we  finally obtain  \eqref{VaritionalGNZ}.
\end{proof}

Note that equation \eqref{VaritionalGNZ} is a vectorial equation. For convenience it is possible to obtain a real equation by summing each coordinate of the vectorial equation. The gradient operator is simply replaced by the divergence operator.

\begin{remark} [on the activity parameter $z$] The parameter $z$ does not appear in the variational GNZ equation \eqref{VaritionalGNZ}. Therefore these equations do not characterize the Gibbs measures as in Proposition \ref{PropGNZreverse}. Actually these variational  GNZ equations characterize the mixing of Gibbs measures with random activity (See \cite{DereudrePHD} for details).

\end{remark}

Let us now explain how to estimate $\beta^*$ from these variational equations. When the observation window $\L_n$ is large enough we identify the expectations of sums in  \eqref{VaritionalGNZ} by the sums. Then the estimator of $\beta^*$  is simply defined by

\begin{equation}\label{varitionalest}
\hat\beta_n= \frac{\sum_{x\in\G_{\L_n}} \text{div}_x f(x,\G\backslash \{x\})}{\sum_{x\in\G_{\L_n}} f(x,\G\backslash \{x\})\text{div}_xh(x,\G\backslash \{x\})}.
\end{equation}
 
Note that this estimator is very simple and quick to compute in comparison to the MLE, MPLE or the general Takacs-Fiksel estimators. Indeed, in \eqref{varitionalest}, there are only elementary operations (no optimization procedure, no integral to compute).

 Let us now finish this section with a consistency result. More general results for consistency, asymptotic normality and practical estimations  are available in \cite{BaddeleyDereudre}.

\begin{theorem} Let $\G$ be a GPP for a stationary and finite range energy function $H$, activity $z^*$ and inverse temperature $\beta^*$. We assume that, for any $\g\in\C$, the function $x\mapsto h(x,\g)$ is differentiable on $\R^d\backslash \g$. Let $f$ be a stationary function from $\R^d\times \C$ to $\R$,  differentiable with respect to the first variable and such that

\begin{equation}\label{assumptionIntVGNZ}
E\left((|f(0,\G|+|\nabla_xf(0,\G)|+|f(0,\G)\nabla_xh(0,\G)|)e^{-\beta^*h(0,\G)}\right)<+\infty
\end{equation}
and
\begin{equation}\label{identityVGNZ}
E\left(f(0,\G)\text{div}_xh(0,\G)e^{-\beta^*h(0,\G)}\right)\neq 0.
\end{equation}

Then the estimator $\hat \beta_n$ converges almost surely to $\beta^*$.
\end{theorem}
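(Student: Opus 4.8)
The plan is to follow the pattern of the consistency proofs for the MLE (Theorem \ref{TheoremconsistencyMLE}) and the Takacs-Fiksel estimator (Theorem \ref{TheoremConsTKest}): reduce to the ergodic case, apply the ergodic theorem to the numerator and the denominator of $\hat\beta_n$ separately, and then identify the ratio of the two limits as $\beta^*$ through the variational GNZ equation \eqref{VaritionalGNZ}. By the ergodic decomposition of stationary Gibbs measures (see \cite{Preston}), it is enough to treat the case where the distribution $P$ of $\G$ is ergodic, which I assume henceforth.

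First I would apply the ergodic theorem, exactly as for \eqref{formule1}, to the two additive functionals appearing in \eqref{varitionalest}. I expect that, almost surely,
$$\frac{1}{\lambda^d(\L_n)}\sum_{x\in\G_{\L_n}} \text{div}_xf(x,\G\backslash \{x\}) \longrightarrow E\Big(\sum_{x\in\G_{[0,1]^d}} \text{div}_xf(x,\G\backslash \{x\})\Big)=:N$$
and
$$\frac{1}{\lambda^d(\L_n)}\sum_{x\in\G_{\L_n}} f(x,\G\backslash \{x\})\text{div}_xh(x,\G\backslash \{x\}) \longrightarrow E\Big(\sum_{x\in\G_{[0,1]^d}} f(x,\G\backslash \{x\})\text{div}_xh(x,\G\backslash \{x\})\Big)=:D.$$
The integrability needed to run the ergodic theorem comes from \eqref{assumptionIntVGNZ}: applying the ordinary GNZ equation \eqref{GNZequtaions} with absolute values turns the single-cell sums into $z^*E(|\text{div}_xf(0,\G)|e^{-\beta^*h(0,\G)})$ and $z^*E(|f(0,\G)\,\text{div}_xh(0,\G)|e^{-\beta^*h(0,\G)})$, both finite. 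Since the normalisation $\lambda^d(\L_n)$ cancels in the ratio, it follows that $\hat\beta_n\to N/D$ as soon as $D\neq0$.

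It then remains to establish that $N=\beta^*D$ and that $D\neq0$. For the latter, the GNZ equation \eqref{GNZequtaions} together with the stationarity of $f$ and of $\G$ gives $D=z^*E(f(0,\G)\,\text{div}_xh(0,\G)e^{-\beta^*h(0,\G)})$, which is non-zero exactly by assumption \eqref{identityVGNZ}. The identity $N=\beta^*D$ is where the main difficulty lies: I would like to invoke \eqref{VaritionalGNZ} directly, but that theorem requires $f$ to have compact support in its first variable, whereas the stationary test function used by the estimator is never compactly supported.

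I would remove this obstruction with a cutoff. Let $\chi:\R^d\to\R$ be smooth with compact support, and apply \eqref{VaritionalGNZ} (in its divergence form) to the admissible test function $\tilde f(x,\g)=\chi(x)f(x,\g)$. Writing $\text{div}_x(\chi f)=\chi\,\text{div}_xf+f\,\text{div}_x\chi$ and converting both sides by \eqref{GNZequtaions}, the $\G$-expectations become independent of $x$ by stationarity, so that the left-hand side equals $z^*\big(\int_{\R^d}\chi\big)E(\text{div}_xf(0,\G)e^{-\beta^*h(0,\G)})+z^*\big(\int_{\R^d}\text{div}_x\chi\big)E(f(0,\G)e^{-\beta^*h(0,\G)})$; the second term vanishes because $\int_{\R^d}\text{div}_x\chi=0$ for a compactly supported $\chi$. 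The right-hand side equals $\beta^*z^*\big(\int_{\R^d}\chi\big)E(f(0,\G)\,\text{div}_xh(0,\G)e^{-\beta^*h(0,\G)})$. Dividing by $z^*\int_{\R^d}\chi>0$ yields $N=\beta^*D$, and therefore $\hat\beta_n\to N/D=\beta^*$ almost surely, which completes the argument.
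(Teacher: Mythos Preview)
Your argument is correct and matches the paper's proof in its overall architecture: reduction to the ergodic case, application of the ergodic theorem to the numerator and denominator, use of the GNZ equation together with \eqref{assumptionIntVGNZ} for integrability, and identification of $D$ with $z^*E\big(f(0,\G)\,\mathrm{div}_xh(0,\G)e^{-\beta^*h(0,\G)}\big)\neq 0$ via \eqref{identityVGNZ}.

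The one place where you genuinely diverge from the paper is the removal of the compact-support hypothesis in \eqref{VaritionalGNZ}. The paper introduces a \emph{sequence} of cutoffs $\psi_n$ equal to $1$ on $\L_n$ and $0$ on $\L_{n+1}^c$, applies \eqref{VaritionalGNZ} to $\psi_n f$, and then controls all three resulting terms by GNZ and stationarity, showing that the boundary contributions are $O\big(\lambda^d(\L_{n+1}\setminus\L_n)\big)$; dividing by $\lambda^d(\L_n)$ and letting $n\to\infty$ yields the stationary version of \eqref{VaritionalGNZ} on $[0,1]^d$. Your route is more direct: a \emph{single} cutoff $\chi$ with $\int_{\R^d}\chi>0$, GNZ plus stationarity to factor each expectation as $\big(\int\chi\big)\times(\text{expectation at }0)$, and the cross term killed by $\int_{\R^d}\mathrm{div}_x\chi=0$. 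This avoids any boundary estimate and any passage to the limit; the price is that you must choose $\chi$ so that $\int\chi>0$ (e.g.\ a nonnegative bump), which you implicitly do. Both approaches rely on exactly the same integrability input \eqref{assumptionIntVGNZ} to justify the GNZ conversions.
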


\begin{proof}

As usual, without loss of generality, we assume that the Gibbs distribution of $\G$ is ergodic. Then by the ergodic theorem the following limits both hold almost surely

\begin{equation}\label{LIMIT1}
\lim_{n\mapsto +\infty} \frac{1}{\lambda^d(\L_n)} \sum_{x\in\G_{\L_n}} \text{div}_xf(x,\G\backslash \{x\})=E\left(\sum_{x\in\G_{[0,1]^d}} \text{div}_xf(x,\G\backslash \{x\})\right)
\end{equation}
and 

\begin{eqnarray}\label{LIMIT2}
 & &\lim_{n\mapsto +\infty} \frac{1}{\lambda^d(\L_n)} \sum_{x\in\G_{\L_n}} f(x,\G\backslash \{x\})\text{div}_xh(x,\G\backslash \{x\})\nonumber\\
 &=& E\left(\sum_{x\in\G_{[0,1]^d}} f(x,\G\backslash \{x\})\text{div}_xh(x,\G\backslash \{x\})\right).
\end{eqnarray}
Note that both expectations in \eqref{LIMIT1} and \eqref{LIMIT2} are finite since by the GNZ equations, the stationarity and assumption \eqref{assumptionIntVGNZ}

$$ E\left(\sum_{x\in\G_{[0,1]^d}} |\text{div}f(x,\G\backslash \{x\})|\right)=E\left(|\text{div}f(0,\G)|e^{-\beta^*h(0,\G)}\right)<+\infty$$
and 
$$ E\left(\sum_{x\in\G_{[0,1]^d}} |f(x,\G\backslash \{x\})\text{div}h(x,\G\backslash \{x\})|\right)=E\left(|f(0,\G)\text{div}h(0,\G)|e^{-\beta^*h(0,\G)}\right)<+\infty.$$

We deduce that almost surely

$$ \lim_{n\mapsto +\infty} \hat\beta_n= \frac{E\left(\sum_{x\in\G_{[0,1]^d}} \text{div}f(x,\G\backslash \{x\})\right)}{E\left(\sum_{x\in\G_{[0,1]^d}} f(x,\G\backslash \{x\})\text{div}h(x,\G\backslash \{x\})\right)},$$

where the denominator is not null thanks to assumption \eqref{identityVGNZ}. Therefore it remains to prove the following variational GNZ equation

\begin{equation}\label{VGNZS}
 E\left(\sum_{x\in\G_{[0,1]^d}} \nabla_xf(x,\G\backslash \{x\})\right)=\beta^*E\left(\sum_{x\in\G_{[0,1]^d}} f(x,\G\backslash \{x\})\nabla_xh(x,\G\backslash \{x\})\right).
\end{equation} 
Note that this equation is not a direct consequence of the variational GNZ equation \eqref{VaritionalGNZ} since the function $x\mapsto f(x,\g)$ does not have a compact support. We need the following cut-off approximation. Let us consider $(\psi_n)_{n\ge 1}$ any sequence of functions from $\R^d$ to $\R$ such that $\psi_n$ is differentiable, equal to $1$ on $\L_n$, $0$ on $\L_{n+1}^c$ and such that $|\nabla\psi_n|$ and $|\psi_n|$ are uniformly bounded by a constant $C$ (which does not depend on $n$). It is not difficult to build such a sequence of functions. Let us now apply the variational GNZ equation \eqref{VaritionalGNZ} to the function $(x,\g)\mapsto \psi_n(x)f(x,\g)$, we obtain

\begin{eqnarray}\label{VaritionalGNZCut}
& & E\left(\sum_{x\in\G} \psi_n(x)\nabla_xf(x,\G\backslash \{x\})\right)+E\left(\sum_{x\in\G} \nabla_x\psi_n(x)f(x,\G\backslash \{x\})\right)\nonumber\\
& = &  \beta^* E\left(\sum_{x\in\G} \psi_n(x)f(x,\G\backslash \{x\})\nabla_xh(x,\G\backslash \{x\})\right).
\end{eqnarray}

Thanks to the GNZ equation and the stationarity we get

\begin{eqnarray*}
& &\left| E\left(\sum_{x\in\G} \psi_n(x)\nabla_xf(x,\G\backslash \{x\})\right) -\lambda^d(\L_n)E\left(\sum_{x\in\G_{[0,1]^d}} \nabla_xf(x,\G\backslash \{x\})\right)\right| \\
&\le & Cz^*\lambda^d(\L_{n+1}\backslash \L_n) E\left(|\nabla_xf(0,\G)|e^{-\beta^*h(0,\G)}\right),
\end{eqnarray*}   

and

\begin{eqnarray*}
& &\left| E\left(\sum_{x\in\G} \psi_n(x)f(x,\G\backslash \{x\})\nabla_xh(x,\G\backslash \{x\})\right)\right.\\
& & \left. -\lambda^d(\L_n)E\left(\sum_{x\in\G_{[0,1]^d}} f(x,\G\backslash \{x\})\nabla_xh(x,\G\backslash \{x\})\right)\right| \\
&\le & Cz^*\lambda^d(\L_{n+1}\backslash \L_n) E\left(|f(0,\G)\nabla_xh(0,\G)|e^{-\beta^*h(0,\G)}\right),
\end{eqnarray*}   
and finally
$$\left|E\left(\sum_{x\in\G} \nabla_x\psi_n(x)f(x,\G\backslash \{x\})\right)\right|\le Cz^*\lambda^d(\L_{n+1}\backslash \L_n) E\left(|f(0,\G)|e^{-\beta^*h(0,\G)}\right).$$

Therefore, dividing equation \eqref{VaritionalGNZCut} by $\lambda^d(\L_n)$, using the previous approximations and letting $n$ go to infinity, we find exactly the variational equation \eqref{VGNZS}. The theorem is proved.

\end{proof}

{\it Acknowledgement:} The author thanks P. Houdebert, A. Zass and the anonymous referees for the careful reading and the interesting comments. This work was supported in part by the Labex CEMPI (ANR-11-LABX-0007-01), the CNRS GdR 3477 GeoSto and the ANR project PPP (ANR-16-CE40-0016).

\bibliographystyle{plain}
\bibliography{biblioDEREUDRE}

\begin{thebibliography}{10}

\bibitem{BaddeleyDereudre}
A.~Baddeley and D.~Dereudre.
\newblock Variational estimators for the parameters of {G}ibbs point process
  models.
\newblock {\em Bernoulli}, 19(3):905--930, 2013.

\bibitem{Baddeleyetal}
A.~Baddeley, P.~Gregori, J.~Mateu, R.~Stoica, and D.~Stoyan.
\newblock {\em Case studies in spatial point process models}, volume 185 of
  {\em Lecture Notes in Statitics}.
\newblock Springer-Verlag, New-York, 2005.

\bibitem{BaddeleyTurner}
A.~Baddeley and R.~Turner.
\newblock Practical maximum pseudolikelihood for spatial point patterns (with
  discussion).
\newblock {\em Aust. N. Z. J. Stat.}, 42(3):283--322, 2000.

\bibitem{BVL}
A.~J. Baddeley and M.~N.~M. van Lieshout.
\newblock Area-interaction point processes.
\newblock {\em Ann. Inst. Statist. Math.}, 47(4):601--619, 1995.

\bibitem{Besag74}
J.~Besag.
\newblock Spatial interaction and the statistical analysis of lattice systems.
\newblock {\em J. Roy. Statist. Soc. Ser. B}, 36:192--236, 1974.
\newblock With discussion by D. R. Cox, A. G. Hawkes, P. Clifford, P. Whittle,
  K. Ord, R. Mead, J. M. Hammersley, and M. S. Bartlett and with a reply by the
  author.

\bibitem{BCD2008}
J.-M. Billiot, J.-F. Coeurjolly, and R.~Drouilhet.
\newblock Maximum pseudolikelihood estimator for exponential family models of
  marked {G}ibbs point processes.
\newblock {\em Electron. J. Stat.}, 2:234--264, 2008.

\bibitem{CCK}
J.~T. Chayes, L.~Chayes, and R.~Koteck{\'y}.
\newblock The analysis of the {W}idom-{R}owlinson model by stochastic geometric
  methods.
\newblock {\em Comm. Math. Phys.}, 172(3):551--569, 1995.

\bibitem{chiu2013}
S.~N. Chiu, D.~Stoyan, W.~S. Kendall, and J.~Mecke.
\newblock {\em Stochastic geometry and its applications}.
\newblock John Wiley \& Sons, 3 edition, 2013.

\bibitem{CDDL}
J.-F. Coeurjolly, D.~Dereudre, R.~Drouilhet, and F.~Lavancier.
\newblock Takacs-{F}iksel method for stationary marked {G}ibbs point processes.
\newblock {\em Scand. J. Stat.}, 39(3):416--443, 2012.

\bibitem{DVJ}
D.~J. Daley and D.~Vere-Jones.
\newblock {\em An introduction to the theory of point processes. {V}ol. {I}}.
\newblock Probability and its Applications (New York). Springer-Verlag, New
  York, second edition, 2003.
\newblock Elementary theory and methods.

\bibitem{DereudrePHD}
D.~Dereudre.
\newblock {\em Diffusion infini-dimensionnelles et champs de {G}ibbs sur
  l'espace des trajectoires continues}.
\newblock PHD, Ecole polytechnique Palaiseau, 2002.

\bibitem{david}
D.~Dereudre.
\newblock The existence of quermass-interaction processes for nonlocally stable
  interaction and nonbounded convex grains.
\newblock {\em Adv. in Appl. Probab.}, 41(3):664--681, 2009.

\bibitem{DereudreVP}
D.~Dereudre.
\newblock Variational principle for {G}ibbs point processes with finite range
  interaction.
\newblock {\em Electron. Commun. Probab.}, 21:Paper No. 10, 11, 2016.

\bibitem{dereudredrouilhetgeorgii}
D.~Dereudre, R.~Drouilhet, and H.-O. Georgii.
\newblock Existence of {G}ibbsian point processes with geometry-dependent
  interactions.
\newblock {\em Probab. Theory Related Fields}, 153(3-4):643--670, 2012.

\bibitem{DH}
D.~Dereudre and P.~Houdebert.
\newblock Infinite volume continuum random cluster model.
\newblock {\em Electron. J. Probab.}, 20:no. 125, 24, 2015.

\bibitem{DLAOS}
D.~Dereudre and F.~Lavancier.
\newblock Consistency of likelihood estimation for {G}ibbs point processes.
\newblock {\em Annals of Statistics}, 45(2):744--770, 2017.

\bibitem{DLS}
D.~Dereudre, F.~Lavancier, and K.~Sta{\v{n}}kov{\'a}~Helisov{\'a}.
\newblock Estimation of the intensity parameter of the germ-grain
  quermass-interaction model when the number of germs is not observed.
\newblock {\em Scand. J. Stat.}, 41(3):809--829, 2014.

\bibitem{DobPer}
R.~L. Dobrushin and E.~A. Pecherski.
\newblock A criterion of the uniqueness of {G}ibbsian fields in the noncompact
  case.
\newblock In {\em Probability theory and mathematical statistics ({T}bilisi,
  1982)}, volume 1021 of {\em Lecture Notes in Math.}, pages 97--110. Springer,
  Berlin, 1983.

\bibitem{Fiksel}
T.~Fiksel.
\newblock Estimation of parametrized pair potentials of marked and nonmarked
  {G}ibbsian point processes.
\newblock {\em Elektron. Informationsverarb. Kybernet.}, 20(5-6):270--278,
  1984.

\bibitem{GeorgiiLN}
H.-O. Georgii.
\newblock {\em Canonical {G}ibbs measures}, volume 760 of {\em Lecture Notes in
  Mathematics}.
\newblock Springer, Berlin, 1979.
\newblock Some extensions of de Finetti's representation theorem for
  interacting particle systems.

\bibitem{Georgii94}
H.-O. Georgii.
\newblock Large deviations and the equivalence of ensembles for {G}ibbsian
  particle systems with superstable interaction.
\newblock {\em Probab. Theory Related Fields}, 99(2):171--195, 1994.

\bibitem{GeorgiiBook}
H.-O. Georgii.
\newblock {\em {G}ibbs measures and phase transitions}, volume~9 of {\em de
  Gruyter Studies in Mathematics}.
\newblock Walter de Gruyter \& Co., Berlin, second edition, 2011.

\bibitem{GK}
H.-O. Georgii and T.~K{\"u}neth.
\newblock Stochastic comparison of point random fields.
\newblock {\em J. Appl. Probab.}, 34(4):868--881, 1997.

\bibitem{GeorgiiYoo}
H.-O. Georgii and H.~J. Yoo.
\newblock Conditional intensity and {G}ibbsianness of determinantal point
  processes.
\newblock {\em J. Stat. Phys.}, 118(1-2):55--84, 2005.

\bibitem{GZ}
H.-O. Georgii and H.~Zessin.
\newblock Large deviations and the maximum entropy principle for marked point
  random fields.
\newblock {\em Probab. Theory Related Fields}, 96(2):177--204, 1993.

\bibitem{geyer-moller1994}
C.~J. Geyer and J.~M{\o}ller.
\newblock Simulation procedures and likelihood inference for spatial point
  processes.
\newblock {\em Scand. J. Statist.}, 21(4):359--373, 1994.

\bibitem{Guyon}
X.~Guyon.
\newblock {\em Random fields on a network}.
\newblock Probability and its Applications (New York). Springer-Verlag, New
  York, 1995.
\newblock Modeling, statistics, and applications, Translated from the 1992
  French original by Carenne Lude{\~n}a.

\bibitem{Hall}
P.~Hall.
\newblock On continuum percolation.
\newblock {\em Ann. Probab.}, 13(4):1250--1266, 1985.

\bibitem{Jensen93}
J.~L. Jensen.
\newblock Asymptotic normality of estimates in spatial point processes.
\newblock {\em Scand. J. Statist.}, 20(2):97--109, 1993.

\bibitem{JensenKunsch}
J.~L. Jensen and H.~R. K{\"u}nsch.
\newblock On asymptotic normality of pseudo likelihood estimates for pairwise
  interaction processes.
\newblock {\em Ann. Inst. Statist. Math.}, 46(3):475--486, 1994.

\bibitem{JensenMoller}
J.~L. Jensen and J.~M{\o}ller.
\newblock Pseudolikelihood for exponential family models of spatial point
  processes.
\newblock {\em Ann. Appl. Probab.}, 1(3):445--461, 1991.

\bibitem{MK}
W.~S. Kendall and J.~M{\o}ller.
\newblock Perfect simulation using dominating processes on ordered spaces, with
  application to locally stable point processes.
\newblock {\em Adv. in Appl. Probab.}, 32(3):844--865, 2000.

\bibitem{Kozlov}
O.~K. Kozlov.
\newblock Description of a point random field by means of the {G}ibbs
  potential.
\newblock {\em Uspehi Mat. Nauk}, 30(6(186)):175--176, 1975.

\bibitem{LMP}
J.~L. Lebowitz, A.~Mazel, and E.~Presutti.
\newblock Liquid-vapor phase transitions for systems with finite-range
  interactions.
\newblock {\em J. Statist. Phys.}, 94(5-6):955--1025, 1999.

\bibitem{Mase92}
S.~Mase.
\newblock Uniform {LAN} condition of planar {G}ibbsian point processes and
  optimality of maximum likelihood estimators of soft-core potential functions.
\newblock {\em Probab. Theory Related Fields}, 92(1):51--67, 1992.

\bibitem{MasePreprint}
S.~Mase.
\newblock Asymptotic properties of {M}{L}{E}s of {G}ibbs models on rd.
\newblock {\em unpublished preprint}, 2002.

\bibitem{MKM}
K.~Matthes, J.~Kerstan, and J.~Mecke.
\newblock {\em Infinitely divisible point processes}.
\newblock John Wiley \& Sons, Chichester-New York-Brisbane, 1978.
\newblock Translated from the German by B. Simon, Wiley Series in Probability
  and Mathematical Statistics.

\bibitem{MayerMontroll}
J.~Mayer and E.~Montroll.
\newblock Molecular distributions.
\newblock {\em J. Chem. Phys.}, 9:2--16, 1941.

\bibitem{meeroy}
R.~Meester and R.~Roy.
\newblock {\em Continuum percolation}, volume 119 of {\em Cambridge Tracts in
  Mathematics}.
\newblock Cambridge University Press, Cambridge, 1996.

\bibitem{Molchanov}
I.~S. Molchanov.
\newblock Consistent estimation of the parameters of {B}oolean models of random
  closed sets.
\newblock {\em Teor. Veroyatnost. i Primenen.}, 36(3):580--587, 1991.

\bibitem{MollerLN}
J.~M{\o}ller.
\newblock {\em Lectures on random {V}orono\u\i\ tessellations}, volume~87 of
  {\em Lecture Notes in Statistics}.
\newblock Springer-Verlag, New York, 1994.

\bibitem{MollerHelisova}
J.~M{\o}ller and K.~Helisov{\'a}.
\newblock Likelihood inference for unions of interacting discs.
\newblock {\em Scand. J. Stat.}, 37(3):365--381, 2010.

\bibitem{MW}
J.~M{\o}ller and R.~P. Waagepetersen.
\newblock {\em Statistical inference and simulation for spatial point
  processes}, volume 100 of {\em Monographs on Statistics and Applied
  Probability}.
\newblock Chapman \& Hall/CRC, Boca Raton, FL, 2004.

\bibitem{NZ}
X.~Nguyen and H.~Zessin.
\newblock {Integral and differential characterizations {G}ibbs processes}.
\newblock {\em Mathematische Nachrichten}, 88(1):105--115, 1979.

\bibitem{Ogata1984}
Y.~Ogata and M.~Tanemura.
\newblock Likelihood analysis of spatial point patterns.
\newblock {\em J. Roy. Statist. Soc. Ser. B}, 46(3):496--518, 1984.

\bibitem{PU}
S.~Poghosyan and D.~Ueltschi.
\newblock Abstract cluster expansion with applications to statistical
  mechanical systems.
\newblock {\em J. Math. Phys.}, 50(5):053509, 17, 2009.

\bibitem{Preston}
C.~Preston.
\newblock {\em Random fields}.
\newblock Lecture Notes in Mathematics, Vol. 534. Springer-Verlag, Berlin-New
  York, 1976.

\bibitem{Ruellebook}
D.~Ruelle.
\newblock {\em Statistical mechanics: {R}igorous results}.
\newblock W. A. Benjamin, Inc., New York-Amsterdam, 1969.

\bibitem{Ruelle70}
D.~Ruelle.
\newblock Superstable interactions in classical statistical mechanics.
\newblock {\em Comm. Math. Phys.}, 18:127--159, 1970.

\bibitem{Takacs}
R.~Takacs.
\newblock Estimator for the pair-potential of a {G}ibbsian point process.
\newblock {\em Statistics}, 17(3):429--433, 1986.

\bibitem{MV94}
J.~van~den Berg and C.~Maes.
\newblock Disagreement percolation in the study of {M}arkov fields.
\newblock {\em Ann. Probab.}, 22(2):749--763, 1994.

\bibitem{VLbook}
M.~N.~M. van Lieshout.
\newblock {\em Markov point processes and their applications}.
\newblock Imperial College Press, London, 2000.

\bibitem{Widom70}
B.~Widom and J.S. Rowlinson.
\newblock New model for the study of liquid-vapor phase transitions.
\newblock {\em J. Chem. Phys.}, 52:1670--1684, 1970.

\end{thebibliography}

\end{document}